\newcommand{\highlightcol}{blue}
\newtheorem{theorem}{Theorem}[subsection]
\newtheorem{proposition}[theorem]{Proposition}
\newtheorem{definition}[theorem]{Definition}
\newtheorem{example}[theorem]{Example}
\newtheorem{corollary}[theorem]{Corollary}
\newtheorem{lemma}[theorem]{Lemma}
\theoremstyle{remark}
\newtheorem{remark}[theorem]{Remark}
\newcommand{\keywords}[1]{{\bf Keywords:} #1}
\begin{document}

\title{Formalising the intentional stance 2:\\ a coinductive approach}

\author{Simon McGregor${}^1$, timorl, Nathaniel Virgo${}^{2,3}$ \\[1ex]
\small
${}^1\,$University of Sussex, UK \\
\small
${}^2\,$Centre of Data Innovation Research,\\
\small School of Physics, Engineering \& Computer Science,\\ 
\small University of Hertfordshire, UK\\
\small
${}^3\,$Earth-Life Science Institute,\\ \small Institute of Science Tokyo, Japan.
}

\maketitle

\begin{abstract}
	We are concerned with the mathematical foundations of cognition: in particular, given a stochastic process with inputs and output, how might its behaviour be related to the pursuit of a goal?
We model this using what we term \emph{transducers}, which are a mathematical object that captures only the external behaviour of such a system and not its internal state. A companion paper provides an accessible description of our results aimed at cognitive scientists, while the current paper gives formal definitions and proofs of the results.

To formalise the concept of a system that behaves as if it were pursuing a goal, we consider what happens when a transducer (a `policy') is coupled to another transducer that comes equipped with a success condition (a `teleo-environment').
A (globally) optimal policy is identified with the behaviour of a system that behaves as if it were perfectly rational in the pursuit of a goal; our framework also allows us to model constrained rationality.

We find that (globally) optimal policies have a property closely related to Bellman’s principle from dynamic programming: a policy that is optimal in one time step will again be optimal in the next time step, but with respect to a different teleo-environment (obtained from the original one by a modified version of Bayesian filtering).
This helps to elucidate the appearance of Bayesian formalisms in models of cognition.
We describe a condition that is sufficient for this property to also apply to the bounded-rational case.

Additionally, we show that a policy is deterministic if and only if there exists a teleo-environment for which it is uniquely optimal among the set of all policies; this is at least conceptually related to classical representation theorems from decision theory.
This need not hold in the bounded-rational case; we give an example of this related to the so-called absent-minded driver problem.
All of the formalism is defined using coinduction, following the style proposed by Czajka\cite{CoinductionCzajka15}.

\end{abstract}

\newcommand{\Z}{\mathbb{Z}}
\newcommand{\N}{\mathbb{N}}

\newcommand{\transducerto}{\mathrel{\triangleright}}

\newcommand{\inputSpace}{I}
\newcommand{\outputSpace}{O}
\newcommand{\indexingSpace}{T}
\newcommand{\transducer}{\pi}
\newcommand{\emptyString}{\varepsilon}
\newcommand{\interleaved}[2]{\left( #1:#2 \right)}
\newcommand{\explicitTransducers}[3]{#2 \transducerto_{#1} #3}
\newcommand{\stochasticProcesses}[2]{\mathrm{Stoch}\left(#1, #2\right)}
\newcommand{\transducers}[2]{#1 \transducerto #2}
\newcommand{\explicitGenericTransducers}{\explicitTransducers{\indexingSpace}{\inputSpace}{\outputSpace}}
\newcommand{\genericTransducers}{\transducers{\inputSpace}{\outputSpace}}
\newcommand{\distributions}[1]{P\left( #1 \right)}
\newcommand{\probability}{\mathbb{P}}
\newcommand{\transition}{\mathbb{T}}
\newcommand{\elementString}[1]{\mathbf{#1}}
\newcommand{\supp}{\mathrm{supp}}
\newcommand{\evolution}{\bullet}

\newcommand{\constrainedTransducers}{T}

\newcommand{\stateSpace}{S}
\newcommand{\actionSpace}{A}
\newcommand{\telosSpace}{G}
\newcommand{\noSuccess}{\bot}
\newcommand{\success}{\top}
\newcommand{\environment}{\varepsilon}
\newcommand{\environments}{\transducers{\actionSpace}{\left( \stateSpace \times \telosSpace \right)}}
\newcommand{\policy}{\transducer}
\newcommand{\policies}{\transducers{\stateSpace}{\actionSpace}}
\newcommand{\successProbability}{S}

\newcommand{\nothingDistribution}{U_{\noSuccess}}
\newcommand{\successDistribution}{U_{\success}}
\newcommand{\doomEnvironment}{\environment_0}

\makeatletter
\DeclareRobustCommand{\btleft}{\mathbin{\mathpalette\btlr@\blacktriangleleft}}
\DeclareRobustCommand{\btright}{\mathbin{\mathpalette\btlr@\blacktriangleright}}

\newcommand{\btlr@}[2]{%
  \begingroup
  \sbox\z@{$\m@th#1\transducerto$}%
  \sbox\tw@{\resizebox{1.1\wd\z@}{1.1\ht\z@}{\raisebox{\depth}{$\m@th#1\mkern-1mu#2$}}}%
  \ht\tw@=\ht\z@ \dp\tw@=\dp\z@ \wd\tw@=\wd\z@
  \copy\tw@
  \endgroup
}

\keywords{mathematical formulations of agency, controlled stochastic processes, as-if agency, intentional stance, POMDPs, applications of coinduction}

\section{Introduction}

This paper presents a formal mathematical treatment related to
  Dennet's \emph{intentional stance} \cite{Dennett1975, Dennett1981,
    Dennett2006}. A companion paper provides motivation for this work, and discusses the approach at a conceptual level,
  while the current paper is focused on the formal aspects and detailed mathematical results.

  To summarise the motivation discussed in the companion paper, we start with a
  system that has inputs and outputs and behaves stochastically. We
    formalise this notion in Section \ref{sec.transducers} under the name of a \emph{transducer}.
  Transducers model the externally observable behaviour of a system, without saying anything about any internal state it might have.

  Transducers are defined coinductively: a transducer is {defined as} a probability distribution $p$ over an output alphabet $O$, together with a function that takes an element of $I\times \supp(p)$ and returns a new transducer, where $I$ is its input alphabet.
  The interpretation is that a transducer {is a gadget that} first gives an output stochastically and then takes an input, returning a new transducer, which can in general depend both on the input it received and on the output it gave.
  This process of generating a new transducer can be thought of as conditioning the input-output behaviour of a system on its input and output, resulting in a new ``externally observable behaviour'' that takes place over the remaining time steps.
  We call this process of updating \emph{evolution} by the input-output pair $(i,o)$.

  We start with a transducer representing the the externally observable behaviour of a system that we want to regard as an agent in some way.
  We are interested in what it would mean to treat such a system ``as if'' it were an agent, following an approach broadly along the lines of previous work on ``as if'' agency \cite{McGregor2016,McGregor2017,VirgoBiehlMcGregor21}, as described in the companion paper.
  To do this we introduce a notion of \emph{teleo-environment}, which is
  another stochastic system that can be coupled to the original one.
  It consists not only of an environment that the system interacts with but also a specification of a goal.
  In our case the goal comes in the form of a ``sucess'' event that may or may
  not occur on each time step, with a probability that depends on the history of
  the two systems' interactions. This notion of optimality is
    defined and explored in Section \ref{sec.teleo}.

  Teleo-environments could be seen as a kind of partial observable Markov decision process (POMDP), while the systems they couple to could be seen as candidate solutions.
  However, our main interest is not in finding a policy that is optimal for a
  given teleo-environment, but rather in exploring the optimality
  relation between policies and environments in general.

Specifically, we are interested in how the relationship between optimal policies and environments for which they are optimal evolves over time.
It turns out that, in the absence of additional constraints (which we will address shortly), the evolution of an optimal policy $\pi$ for an environment $\varepsilon$ is always optimal for a corresponding evolution of that environment.
This can be seen as a version of \emph{Bellman's principle} from dynamic programming and reinforcement learning.
Perhaps surprisingly, it can also be seen as a kind of \emph{Bayesian filtering}.
Bellman's principle states that in certain settings, if a candidate solution is optimal at one time step it will still be optimal in the next time step (conditioned on its input and output).
Bayesian filtering concerns the updating of priors to posteriors, also conditioned on inputs.
In Bayesian filtering, one keeps a prior over the current value of some hidden variable.
At each time step, one performs Bayesian updating to obtain a posterior over the \emph{new} value of the hidden variable, so that previous values are forgotten.
The relationship between Bellman's principle and Bayesian filtering in our
context is explained in Section \ref{sec.filtering}.

However, in our framework the version of Bellman's principle that applies is slightly different to the familiar one from dynamic programming.
In addition to conditioning on the system's input and output, the applicable version of Bellman's principle also involves conditioning on the event that success does not occur (even if success may in fact have occurred).
We call this the \emph{value-laden Bellman principle for teleo-optimality} (Definition \ref{def.valueLadenBellman}), and it gives rise to a corresponding notion of \emph{value-laden (Bayesian) filtering}.
The reason for this has to do with the way we have defined optimality, whereby an agent is optimal if it maximises the probability that success occurs {at least once}.
The details are explored in Section \ref{sec.filtering}.
We also show that if a policy is (unconstrained) optimal for some environment, there must be at least one environment for which a non-value-laden version of the Bellman property holds, corresponding to a more intuitive `sensorimotor-only' version of Bayesian filtering. This is not the case in general for constrained optimality.
Real agents do not have unbounded computational resources, and consequently are not necessarily globally optimal for the problems they try to solve. Decision-making with limited resources is often known as \emph{bounded rationality}, and it forms an important part of our framework. There are two main formal approaches to bounded rationality. In the terminology of \cite{Icard-draft}, the `cost-theoretic approach' incorporates a resource cost term into the cost function, while the `panoramic' approach considers optimality within some constrained class of solutions. We follow the panoramic approach here.

In our case, a ``constrained class'' is nothing but a set of transducers; a transducer is optimal within a constrained class (for a given teleo-environment) if it performs at least as well as every other member of the class.
We show, perhaps not surprisingly, that our version of Bellman's principle does not apply to optimality within an arbitrary constrained class of transducers.
However, in Section \ref{sec.filtering} we show that if a constraint class has a property we call `closed under splicing', the value-laden Bellman property still holds.
Not all constrained classes of interest have this property, however, and in Example \ref{ex.noUfsFiltering} we give a counterexample that does not obey the value-laden Bellman property; the counterexample involves memory constraints.

We are also interested in when a transducer is \emph{uniquely} optimal for some teleo-environment.
In such a case we can say the teleo-environment \emph{specifies} the system's behaviour, i.e.\ if we know the system's `goal' then we can say what the system will do.
If a transducer is uniquely optimal for some teleo-environment, we say it is \emph{specifiable}.
We show that the deterministic transducers are exactly the transducers that are uniquely optimal (in the global, unconstrained sense) for some teleo-environment.
(Though there may be, and in general are, many teleo-environments for which a given deterministic transducer is uniquely optimal.)
In Section \ref{ssec.specifyingEnv} we also give some necessary conditions on a teleo-environment that must hold in order for it to have a uniquely optimal solution.
We also investigate specifiability in the case of constrained optimality.
If a constrained class includes all the deterministic transducers then they are still exactly the specifiable transducers.
However, in a constrained class that lacks all deterministic transducers, there may be non-deterministic transducers that are specifiable within that class (i.e. uniquely optimal within the constrained class, for some teleo-environment).
We give an example of this in Section \ref{sec.absentMinded}, based on the \emph{absent-minded driver} from game theory \cite{PICCIONE19973}, which is uniquely optimal within the class of memoryless strategies.

We start with a short commentary on \emph{coinduction}, the formal methodology we use for our analysis.

\subsection{The coinductive approach}
\label{sec.coinductionForDummies}

Our treatment uses \textit{coinductive} reasoning to construct the objects we call transducers. While the coinductive formalism might not be familiar to most readers, it corresponds closely to the intuition of `moving step-by-step' and produces elegant proofs. For any readers wishing to {become familiar with the formal details of} coinductive reasoning we recommend \cite{CoinductionCzajka15} for a set-theoretic justification for the approach we use in this paper, which treats coinduction as a first-class primitive. Alternatively, \cite{IntroCoalgebraJacobs16} provides a
comprehensive introduction from the perspective of category theory, and \cite{HoTTCoinductionAhrensEtAl15} a type-theoretic approach.

Intuitively, while induction lets us define and reason about objects with a finite internal structure, such as natural numbers or finite trees, coinduction lets us define and reason in a similar way about objects with an infinite internal structure. In practice this amounts to accepting objects that do not start at an inductive base case, such as trees without leaf nodes, which are necessarily infinite. Coinduction is the category-theoretic dual of mathematical induction, and therefore many of the concepts associated with  induction have coinductive analogs. The main mathematical consequence of this is that one can define functions \emph{into} coinductive objects using the function being defined \cite[Theorem~2.4]{CoinductionCzajka15}; \cite[Theorem~7]{HoTTCoinductionAhrensEtAl15} and prove theorems about relations that are coinductively defined \cite[Theorem~4.29]{CoinductionCzajka15}\footnote{No theorem for this is needed in \cite{HoTTCoinductionAhrensEtAl15} due to type theory treating propositions as types.} or equality \cite[Theorem~4.33]{CoinductionCzajka15} \cite[Theorem~18]{HoTTCoinductionAhrensEtAl15} using coinductive hypotheses (by analogy to inductive hypotheses).

Note that we could have derived the same results without using coinduction, but we chose to use the coinductive formalism in order to emphasise the dynamical relations between transducers, permitting an  elegant set of definitions and proofs.

\section{Relation to previous work}

The relationship of our work to cognitive science is discussed in detail in the companion paper.
Here we restrict ourselves to a brief overview of previous approaches to `as-if' agency, followed by previous works that have considered transducer-like concepts, and finally the relationship between our work and computational mechanics.

\subsection{``As-if'' agency}

The current work is part of a research programme of ``as-if'' agency \cite{McGregor2016,VirgoBiehlMcGregor21}, which takes inspiration from Dennett's \cite{Dennett1975,Dennett1981Book,Dennett2006} idea of the \emph{intentional stance} in that it aims to understand in mathematical language what it means to treat a system as if it is an agent.
In \cite{VirgoBiehlMcGregor21} this is approached via the notion of an \emph{interpretation map}, which maps the system's internal state to a Bayesian prior over the states of another system that it can partially observe, subject to equations that guarantee the prior will be updated to a posterior in a consistent way.
In \cite{Biehl2022} this is extended to the case of POMDPs, where the agent must interact with its environment to maximise an exponentially discounted expected reward.
However, the current paper takes a somewhat different approach, since we are concerned with interpreting only the externally observable behaviour of a system, without any reference to its internal state.

The current work shares with these previous works the idea that the intentional stance is optional, and that there may be (and in general are) many different ways to interpret a given system as an agent.

This can be contrasted with the approach taken in \cite{Orseau2018}, which is related in that it starts from a given behaviour and asks whether it is an agent in the sense of being optimal for \emph{some} POMDP-like task, but differs in that its focus is on empirically distinguishing agents from non-agents, assuming that one already has well defined classes of each. With this assumption they are able to describe some specific algorithms for distinguishing agents from non-agents and inferring their goals. Our current work, among other things, calls into question the idea that agents and non-agents can in principle be distinguished in this way.

\subsection{Transducers and input-output behaviours}

The central object of our paper, the `transducer', is meant to model the externally observable input-output behaviour of a system, without reference to its internal state.
We want to emphasise that this idea by itself is not new.
We show in Section~\ref{sec.unrolling} that transducers could equivalently be defined as probability distributions of infinite sequences of outputs given infinite sequences of inputs, subject to a causality condition that prevents past outputs from depending on future inputs.
This idea appears throughout \cite{Hutter2005universal} under the name of chronological distributions.
The context here is quite close to ours in that it deals with POMDP-like tasks and their optimal solutions.
Such conditional distributions over sequences also appear in information, such as in the definition of directed information \cite{massey1990causality}, where they are called discrete channels with memory.
Similar ideas also appear in control theory under the name of ``controlled stochastic process,'' as mentioned in \cite{DiLavore2022coinductive}.

Our reason for introducing a new name is to emphasise the notion of evolution, along with our coinductive way of thinking about them.
That is, we use the term `transducer' to emphasise that conditioning on one or more time steps of given inputs and outputs results in a new distribution over future outputs given future inputs, namely the evolved transducer.

We do not claim originality over the coinductive approach either however, since a coinductive approach to controlled stochastic processes is also taken by \cite{DiLavore2022coinductive}, in a rather more general formal setting than ours, making heavy use of category theory.
The originality of our work lies in its results and what they say about Dennett's ideas, rather than in the details of our formalism.

\subsection{Computational mechanics and unifilar machines}

As in computational mechanics we are concerned with the externally observable behaviour of stochastic input-output processes, and the way in which we do this has something in common with the idea of an $\varepsilon$-transducer \cite{barnett2015-epsilon}.
The formal objects known as $\varepsilon$-machines and $\varepsilon$-transducers are the minimal unifilar representations of stochastic processes (usually assumed to be stationary), and their states are called causal states.
The concept of unifilarity plays an important role in our work (see Section~\ref{sec.unifilarMachines}).
This concept originally comes from the computational mechanics literature, since it is used in one of the ways that one can define an $\varepsilon$-machine \cite{travers2011equivalence}, or more generally an $\varepsilon$-transducer \cite{barnett2015-epsilon}.
However, in our case, instead of using a minimal unifilar representation we are  concerned with the equivalent of causal states as mathematical objects in their own right (our transducers).
The connection between these ideas is made more precise in \cite{Virgo2023unifilar}, where the set of all transducers is shown to form the state space of a single unifilar machine.
(The terminal one, in the sense of category theory.)
The mapping from stochastic Moore machines to unifilar machines (our definition~\ref{def.unifilarMapping}) has much in common with the `mixed state representation' of a stochastic processes described in \cite{crutchfield2016complexity}.
Although our work is related to computational mechanics in that unifilarity is a central concept, it differs in that we are generally not concerned with stationary processes (indeed, it's not entirely clear what form a stationarity assumption would take in our framework) or with minimal representations, so the questions we address are somewhat different from the typical computational mechanics literature.

\section{Transducers}
\label{sec.transducers}

To mathematically describe the environments and policies we will be discussing in this paper we will use the basic notion of a \emph{transducer}.
Conceptually, as mentioned in the introduction, a transducer is something that takes a sequence of inputs and returns a sequence of outputs, which may depend stochastically on the inputs.
This dependence must be \emph{causal}, in the sense that a given output can only depend on inputs that were received at earlier times.

However, in order to reason formally about transducers we will instead think of them in a different way.
We think of a transducer as something that can return an output stochastically.
Having received its output we can then give it an input, at which point the transducer will transform into a new transducer, which will in general have a different probability distribution for its next output.
In this way we can give the transducer a sequence of inputs and receive a sequence of outputs, and the causality condition will be satisfied automatically.

We will employ this notion of transducer in several different ways.
We will use transducers to model the input-output behaviour of the systems we are interested in, but we will also use them to model those agents' supposed {beliefs} about their environment. It will turn out that we can also use transducers to represent stochastic mixtures of other transducers - we will explain this concept below.

Our concept of transducer has something in common with the notion of {$\varepsilon$-transducer} defined in \cite{barnett2015-epsilon}, in that both model a stochastic process that takes a sequence of inputs and generates a sequence of outputs, with each output being able to depend on inputs that precede it.
However, the $\varepsilon$-transducer framework is concerned with stationary processes, which leads to a substantially different formalism.
Our transducers also have much in common with controlled stochastic processes and other similar concepts from artificial intelligence and control theory.
For example, the \emph{chronological semimeasures} in \cite{Hutter2005universal} are an equivalent mathematical object and are also used to model the policies and environments of agents.
Our main reason for coining a new term is our focus on the coinductive description, and in particular on the notion of evolving a transducer described in Section~\ref{sec.evolutions}.

We will define transducers coinductively, as mentioned in Section~\ref{sec.coinductionForDummies}.
This allows us to use coinductive proofs, presented in the style proposed in \cite{CoinductionCzajka15}. We will also introduce some additional notation particularly useful for this style.

Now on to define our main object of interest.
In the following, given a finite set $S$, we write $P(S)$ for the set of all probability distributions over $S$, that is, the set of functions $p\colon S\to [0,1]$ such that $\sum_{a\in A}p(a) = 1$.
Given a probability distribution $p\in P(S)$, we write $\supp(p)$ for the support of $p$, which is a subset of $S$.

\begin{definition}[Transducer]
  \label{def.transducer}
  Let $\outputSpace$ and $\inputSpace$ be finite sets called the \emph{output space} and \emph{input space} respectively. We define \emph{the set of transducers from $\inputSpace$ to $\outputSpace$, written $\genericTransducers$,} coinductively using the single constructor taking $p \in \distributions{\outputSpace}$ and $t \colon \inputSpace \times \supp\left( p \right) \to \genericTransducers$ written as $\left( p, t \right)$.
\end{definition}

Given a transducer $\transducer \colon \genericTransducers$ we will write $\probability_{\transducer}$ for the probability distribution in the first argument of the constructor, and $\transition_{\transducer}$ for the transition function in the second argument. The functions $\probability$ and $\transition$ are sometimes called \emph{destructors}.

Briefly, this means that a transducer $\pi$ is an object that provides two things:
first, it provides a way to stochastically generate a member of the output set, given by the distribution $\probability_\pi$.
Second, it provides a way to obtain a new transducer, given an input $i\in I$ and an output $o\in \supp(\probability_\pi)$.
We can think of a transducer as specifying a conditional distribution over output sequences given input sequences.
If we have given the transducer an input $i$ and we have observed that it produced the output $o$ in response, then we can calculate a new conditional distribution over output sequences given input sequences, starting from the next time step.
The result is again a transducer, and in fact it is the one given by $\transition_\pi(i,o)$.
The reason the transition function takes an element of $\supp(\probability_\pi)$ rather than $O$ is that we can only calculate conditional distributions for those outputs that can occur with positive probability.
We explore this connection more in section~\ref{sec.unrolling}, where we also show that this operation can be seen as a form of Bayesian conditioning.

For readers familiar with the category-theoretic approach to coalgebra  as in \cite{IntroCoalgebraJacobs16}, a transducer is an element of the final coalgebra of the polynomial functor
$\sum_{p\in P(O)}(I\times \supp(p)\to {-})$, or $\sum_{p\in P(O)}y^{I\times \supp(p)}$ in the notation of \cite{niu2023poly}.
An explicit construction of this terminal coalgebra is given in \cite{Virgo2023unifilar}.
However, we will generally not mention the category-theoretic approach,
preferring instead to reason in the style proposed in \cite{CoinductionCzajka15}, resembling classical inductive reasoning.

If we have a countable set of probability distributions, together with a probability weight assigned to each one, we can form a mixture distribution, which is itself a probability distribution.
In the same way, if we have a sequence of transducers together with a sequence of probability weights assigned to them, we can form a mixture of the transducers, which is itself a transducer.
One application for this is the case where we are dealing with an unknown transducer, drawn from a distribution with countable support.

\newcommand{\explicitMixture}[3]{\sum_{#1 = 1}^{\infty} #2{#1} #3{#1}}
\newcommand{\mixture}[2]{\explicitMixture{k}{#1}{#2}}
\newcommand{\kthProbability}[2]{\probability_{#1_{#2}}}
\newcommand{\kthTransition}[2]{\transition_{#1_{#2}}}
\newcommand{\kthProbabilityAt}[3]{\kthProbability{#1}{#3}\left( #2 \right)}
\newcommand{\kthTransitionAt}[4]{\kthTransition{#1}{#4}\left( #2, #3 \right)}
\newcommand{\kthCoefficientAt}[4]{\frac{#1_{#4}\kthProbabilityAt{#2}{#3}{#4}}{\explicitMixture{l}{#1_}{\kthProbabilityAt{#2}{#3}}}}
\begin{definition}[Mixture of transducers]
  \label{def.transMixture}
  Given transducers $\transducer_{k} \colon \genericTransducers$ and numbers $\alpha_{k} \geq 0$, such that $\sum_{k =
    1}^{\infty} \alpha_{k} = 1$, we define a (weighted) mixture $\sigma = \mixture{\alpha_}{\transducer_}$ as a transducer $(\probability_\sigma, \transition_\sigma)$ with
  \begin{equation}
    \probability_{\sigma} = \mixture{\alpha_}{\kthProbability{\transducer}}
  \end{equation}
  and
  \begin{equation}
    \label{eq.mixtureUpdate}
    \transition_{\sigma}(i,o) = \mixture{\kthCoefficientAt{\alpha}{\transducer}{o}}{\kthTransitionAt{\transducer}{i}{o}},
  \end{equation}
  for $i\in\inputSpace, o\in \supp(\probability_{\sigma})$.
\end{definition}
Note that this is a coinductive definition, since Equation~\ref{eq.mixtureUpdate} uses the notion of a mixture of transducers on its right-hand side.

When only a finite number of the coefficients are nonzero we will write these mixtures as ordinary sums, omitting the zero terms.

It is not a coincidence that Equation~\ref{eq.mixtureUpdate} resembles a Bayesian update.
A mixture of transducers can be interpreted in Bayesian terms as a prior over these transducers (or technically, over the indices of a list of transducers) with the coefficients being the associated probabilities.
If we feed the unknown transducer an input $i$ it will emit an output $o$ and evolve into a new transducer.
When we receive the output $o$ we must use Bayes' theorem to update our prior to a posterior, which is what Equation~\ref{eq.mixtureUpdate} expresses.

\subsection{Evolutions}
\label{sec.evolutions}

Intuitively a transducer $\transducer \colon \genericTransducers$ outputs an element $o \in \outputSpace$ with probability $\probability_{\transducer}\left( o \right)$ and receives an input element $i \in \inputSpace$, turning into the transducer $\transition_{\transducer}\left( i, o \right)$ in the process. This operation is the basis of all the behaviours we investigate in this paper, and one of the main reasons for choosing the definitions we are working with is how naturally it arises within this context.

In this section we extend this operation from one time step to many and introduce notation for it, as well as some related necessary notation. We start with the latter.
\begin{definition}[Notation for strings]
  \label{def.stringNotation}
  Given a set $A$ we will consider the set of finite strings of elements of this set $A^{*}$. We will write $a, b,\dots$ for elements of $A$ and $\elementString{a}, \elementString{b},\dots$ for elements of $A^{*}$.

  We will denote concatenation of two strings by $\elementString{a} \cdot \elementString{b}$, overloading the operation for single elements as well, $\elementString{a} \cdot b$. We also denote the empty string as $\emptyString$.

  We denote the element of a string at position $n$ by $\elementString{a}_{n}$ (with the indexing starting from 0), and prefixes and suffixes as $\elementString{a}_{<n}$ and $\elementString{a}_{>n}$ respectively.

  Given two strings of the same length $\elementString{a} \in A^{n}, \elementString{b} \in B^{n}$ we write $\interleaved{\elementString{a}}{\elementString{b}} \in \left( A \times B \right)^{n}$ for the interleaved string $\left( \left(\elementString{a}_{i}, \elementString{b}_{i}\right) \right)_{i = 0}^{n}$. We will omit any additional parentheses when working with more than two strings, just as we do with tuples.
\end{definition}

With this background we can define transducer evolutions.
\newcommand{\undefinedEvolution}{\star}
\begin{definition}[Evolution]
  \label{def.evolution}
  We define the operation of transducer evolution by finite trajectories using the infix operator $\evolution \colon \genericTransducers \to \left(\inputSpace \times \outputSpace\right)^{*} \to \genericTransducers \cup \left\{ \undefinedEvolution \right\}$ recursively with the base
  \begin{equation}
    \transducer \evolution \interleaved{\emptyString}{\emptyString} = \transducer,
  \end{equation}
  and the step
  \begin{equation}
    \transducer \evolution \interleaved{ \elementString{i} \cdot i}{ \elementString{o} \cdot o } =
    \begin{cases}
      \transition_{\transducer \evolution \interleaved{ \elementString{i}}{ \elementString{o} }}\left( i, o \right) & \textrm{ if } \transducer \evolution \interleaved{ \elementString{i}}{ \elementString{o} } \neq \undefinedEvolution \textrm{ and } o \in \supp\left( \probability_{\transducer \evolution \left( \elementString{i}, \elementString{o} \right)} \right),\\
      \undefinedEvolution & \textrm{ otherwise.}
    \end{cases}
  \end{equation}
  We will refer to an evolution returning a transducer (as opposed to $\undefinedEvolution$) as `valid'.
\end{definition}

\subsection{Unrolling transducers}
\label{sec.unrolling}

We present an alternative way of viewing transducers, that might improve intuitions and serves as a justification for some naming later.
In particular, it clarifies the relationship between transducers and Bayes' rule, which plays a fundamental role when transducers are formulated this way.

In this section, given a function $q\colon X\to \distributions{Y}$ where $X$ and $Y$ are finite sets, we will write $q(y\parallel x)$ for the probability of the outcome $y$ according to the probability distribution $q(x)$, to stress the interpretation as conditional probability.

One can think of a transducer as something that takes in a sequence of inputs and stochastically generates a sequence of outputs, subject to the constraint that each element of the output sequence can only depend on the inputs that precede it in time. To do this we will show that the following definition is equivalent to definition \ref{def.transducer}.
\newcommand{\unrolledTransducer}[1]{p_{#1}}
\newcommand{\unrolledTransducers}{\inputSpace\btright\outputSpace}

\begin{definition}[Unrolled transducers]
  An \emph{unrolled transducer} from $\inputSpace$ to $\outputSpace$ consists of a family of functions $\unrolledTransducer{n} \colon \inputSpace^n \to \distributions{\outputSpace^{n+1}}$, one for each $n\in\N$.
  These must have the property that
  \begin{equation}
    \label{eq.unrolledTransducerCondition}
    \sum_{o \in \outputSpace} \unrolledTransducer{n+1}(\elementString{o} \cdot o \parallel \elementString{i} \cdot i)
    = \unrolledTransducer{n}( \elementString{o} \parallel \elementString{i}),
  \end{equation}
  for all sequences $\elementString{i} \cdot i \in \inputSpace^{n+1}$, $\elementString{o} \in \outputSpace^{n+1}$. We denote the set of unrolled transducers as $\unrolledTransducers$.
\end{definition}

In other words, for every finite sequence of inputs, an unrolled transducer specifies a probability distribution over finite sequences of outputs.
These output sequences are longer than the input sequences because we use the convention that a transducer gives its first output before it receives any input.

The condition in equation~\eqref{eq.unrolledTransducerCondition} serves two purposes:
firstly it says that these conditional probability distributions of output sequences must all be consistent with one another, and secondly it says that the marginal probability distribution over outputs up to time $n+1$ can only depend on the inputs up to time $n$.
This is the causality condition mentioned previously: outputs may depend on past inputs, but not on future ones.

Conceptually, an unrolled transducer specifies conditional probabilities of infinite output sequences given infinite input sequences, subject to a set of conditional independence relationships induced by the causality condition.
However, we avoid the use of measure theory by talking about probability distributions over finite sequences of every length instead of measures over infinite sequences.
This could be justified using a version of Kolmogorov's extension theorem, which says that knowing a probability distribution over every finite sequence of symbols (such that they agree on all the marginals) is equivalent to knowing a probability {measure} over the space of all infinite sequences.
We will never need to explicitly take such a step though, since by working coinductively we only ever need to consider finitely supported distributions over one symbol at a time.

Unrolled transducers are closely related to controlled stochastic processes.
Indeed, in the introduction we described unrolled transducers \emph{as} controlled stochastic processes.
This is justified because $\unrolledTransducers$ are equivalent to transducers $\genericTransducers$.
To show this, let us first define the unrolling of a transducer.

\newcommand{\unrolling}{u}
\begin{definition}
  The function $\unrolling \colon \genericTransducers \to \unrolledTransducers$ is defined inductively with the base
  \begin{equation}
    \unrolling_0\left( p, t \right)(o) = p(o),
  \end{equation}
  and the step
  \begin{equation}
    \unrolling_{n+1}\left( p, t \right)(o \cdot \elementString{o} \parallel i \cdot \elementString{i}) = p(o) \unrolling_n(t(i, o))(\elementString{o} \parallel \elementString{i}).
  \end{equation}
  As we will often do, we gloss over the transition sometimes not being defined, since this is only the case when the probability would be zero anyway.
\end{definition}

To see that this definition is correct, i.e.\ the resulting family of probabilities satisfies equation \eqref{eq.unrolledTransducerCondition}, we reason by induction. For the base we have
\begin{equation}
  \sum_{o' \in \outputSpace} \unrolling_1(p, t)(o \cdot o' \parallel i) = \sum_{o' \in \outputSpace} p(o) \probability_{t(i, o)}(o'),
\end{equation}
where the right hand side sums to $p(o) = \unrolling_0(p, t)(o)$ as desired. Similarly for the inductive step
\begin{equation}
  \sum_{o' \in \outputSpace} \unrolling_{n+1}(p, t)(o \cdot \elementString{o} \cdot o' \parallel i \cdot \elementString{i} \cdot i')
  = \sum_{o' \in \outputSpace} p(o) \unrolling_n(t(i, o))(\elementString{o} \cdot o' \parallel \elementString{i} \cdot i'),
\end{equation}
whence we can apply the inductive hypothesis
\begin{equation}
  p(o) \unrolling_{n-1}(t(i, o))(\elementString{o} \parallel \elementString{i}),
\end{equation}
which is exactly the definition of
\begin{equation}
  \unrolling_{n}(p, t)(o \cdot \elementString{o} \parallel i \cdot \elementString{i}),
\end{equation}
as desired.

It remains to define an inverse and show that it is such. To do this it's useful to first define a transition function for unrolled transducers.
\newcommand{\unrolledTransition}{s}
\begin{definition}
  The transition function for unrolled transducers
  \begin{equation*}
    \unrolledTransition \colon \prod_{p \colon \unrolledTransducers}\inputSpace \times \supp(p_0) \to \unrolledTransducers
  \end{equation*}
  is defined as
  \begin{equation}
    \unrolledTransition(p, i, o)_n(\elementString{o} \parallel \elementString{i}) = \frac{p_{n+1}(o \cdot \elementString{o} \parallel i \cdot \elementString{i})}{p_0(o)}.
  \end{equation}
\end{definition}
To see that the transition function for unrolled transducers actually returns an unrolled transducer, note that it just represents conditioning all the functions in the family on the first output being $o$ and the first input being $i$, which preserves condition \eqref{eq.unrolledTransducerCondition}.

Note that the transition function for unrolled transducers is nothing but Bayesian conditioning on an input-output pair. This fact will allow us to treat evolutions of transducers as Bayesian updates in the following. We just need to finish proving the equivalence.
\newcommand{\rolling}{r}
\begin{lemma}
  The function $\unrolling$ has an inverse defined coinductively as
  \begin{equation}
    \rolling( p ) = \left( p, (i, o) \mapsto \rolling( \unrolledTransition(p, i, o) \right),
  \end{equation}

  \begin{proof}
    We will first prove that the first composition $\unrolling(\rolling(p))$
    is the identity by induction, starting with
    \begin{equation}
      \unrolling_0(\rolling(p)) = p_0,
    \end{equation}
    and the step
    \begin{equation}
      \unrolling_{n+1}(\rolling(p)) = \unrolling_{n+1}\left(\left(p_0, (i, o) \mapsto \rolling(\unrolledTransition(p, i, o))\right)\right),
    \end{equation}
    which on specific arguments $o \cdot \elementString{o} \in \outputSpace^{n+2}$, $i \cdot \elementString{i} \in \inputSpace^{n + 1}$ is
    \begin{equation}
      p_0(o)\unrolling_n(\rolling(\unrolledTransition(p, i, o)))(\elementString{o} \parallel \elementString{i}),
    \end{equation}
    letting us apply the inductive hypothesis
    \begin{equation}
      p_0(o)\frac{p_{n+1}(o \cdot \elementString{o} \parallel i \cdot \elementString{i})}{p_0(o)} = p_{n+1}(o \cdot \elementString{o} \parallel i \cdot \elementString{i}),
    \end{equation}
    as required.

    Now let us examine the other composition
    \begin{equation}
      \left(\unrolling_0(p, t), (i, o) \mapsto \rolling(\unrolledTransition(\unrolling(p, t), i, o)))\right).
    \end{equation}
    Using the definition of $\unrolling_0$ and noting that the multiplication and division in definitions of $s$ and $\unrolling$ cancel out we get
    \begin{equation}
      \left(p, (i, o) \mapsto \rolling(\unrolling(t(i, o)))\right),
    \end{equation}
    so it remains to use the coinductive hypothesis to finish the proof.
  \end{proof}
\end{lemma}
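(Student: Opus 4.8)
The plan is to show that $\rolling$ is a two-sided inverse of $\unrolling$, handling the two composites $\unrolling\circ\rolling$ and $\rolling\circ\unrolling$ separately. First, though, one should note that $\rolling$ is well defined: its defining equation is a legitimate coinductive definition of a function \emph{into} $\unrolledTransducers$ phrased in terms of itself, and the transition it builds is only ever evaluated at pairs $(i,o)$ with $o\in\supp(p_0)$, which is exactly where $\unrolledTransition(p,i,o)$ is defined, so no division by zero occurs.

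For $\unrolling\circ\rolling=\mathrm{id}$ on $\unrolledTransducers$: since an unrolled transducer is just a family $(p_n)_{n\in\N}$, it suffices to prove $\unrolling_n(\rolling(p))=p_n$ for every $n$, which I would do by induction on $n$. The base case follows directly from the definition of $\unrolling_0$ and the fact that $\rolling(p)$ carries $p_0$ in its first component. For the step, unfolding $\unrolling_{n+1}$ on $\rolling(p)=(p_0,(i,o)\mapsto\rolling(\unrolledTransition(p,i,o)))$ at an argument $o\cdot\elementString{o}\parallel i\cdot\elementString{i}$ gives $p_0(o)\,\unrolling_n(\rolling(\unrolledTransition(p,i,o)))(\elementString{o}\parallel\elementString{i})$; the inductive hypothesis rewrites the $\unrolling_n(\rolling(\cdot))$ part as $\unrolledTransition(p,i,o)_n$, whose explicit formula has denominator exactly $p_0(o)$, so the two factors cancel and we are left with $p_{n+1}(o\cdot\elementString{o}\parallel i\cdot\elementString{i})$.

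For $\rolling\circ\unrolling=\mathrm{id}$ on $\genericTransducers$: here I would argue by coinduction. Writing an arbitrary transducer as $(p,t)$ and unfolding, $\rolling(\unrolling(p,t))=(\unrolling_0(p,t),\,(i,o)\mapsto\rolling(\unrolledTransition(\unrolling(p,t),i,o)))$. The first component is $p$ by definition of $\unrolling_0$. For the transition function, the key algebraic fact is that $\unrolledTransition(\unrolling(p,t),i,o)=\unrolling(t(i,o))$: this is just the cancellation of the multiplication by $p(o)$ in the definition of $\unrolling_{n+1}$ against the division by $p_0(o)$ in the definition of $\unrolledTransition$, legitimate for $o\in\supp(p)$. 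Hence $\rolling(\unrolling(p,t))=(p,\,(i,o)\mapsto\rolling(\unrolling(t(i,o))))$, and applying the coinductive hypothesis to each smaller transducer $t(i,o)$ collapses this to $(p,t)$, as desired.

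The main thing to be careful about is this last step: one has to massage $\rolling(\unrolling(p,t))$ into the form $(p,(i,o)\mapsto\rolling(\unrolling(t(i,o))))$ \emph{before} invoking the coinductive hypothesis, so that the hypothesis is applied underneath the constructor, to the strictly smaller data $t(i,o)$, rather than prematurely. Everything else — the auxiliary claims that $\unrolledTransition$ sends unrolled transducers to unrolled transducers and that $\unrolledTransition(\unrolling(p,t),i,o)=\unrolling(t(i,o))$ — is routine bookkeeping, amounting to the observation that $\unrolledTransition$ is nothing but Bayesian conditioning on $(i,o)$ while $\unrolling_{n+1}$ has already factored out $p(o)$.
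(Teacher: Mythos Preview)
Your proposal is correct and follows essentially the same route as the paper: induction on $n$ for $\unrolling\circ\rolling=\mathrm{id}$ (with the $p_0(o)$ cancellation against the denominator in $\unrolledTransition$), and coinduction for $\rolling\circ\unrolling=\mathrm{id}$ via the identity $\unrolledTransition(\unrolling(p,t),i,o)=\unrolling(t(i,o))$ before invoking the coinductive hypothesis on $t(i,o)$. One small slip: in your well-definedness remark you call $\rolling$ a coinductive definition of a function \emph{into} $\unrolledTransducers$, but of course it lands in the coinductively defined set $\genericTransducers$; this is just a typo and does not affect the argument.
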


Since this is the first time we are using this style of reasoning, let us
explain it in a bit more detail. We rely on a `coinductive hypothesis' that the relevant equality holds for any `smaller' or `deconstructed' object. In the case of the above proof we acquire such
an object in the form of $t(i, o)$ -- it is `smaller', intuitively, due
to us discarding some information, in the form of $p$, from the full
object. This principle can be applied not only to equality but also to properties known as `coinductively defined' properties; we make use of this in later sections. A reader interested in a more complete formal explanation of the above
can find it in \cite[Example 4.30]{CoinductionCzajka15}.

\subsection{Constrained transducers}
\label{sec.constrained}

In order to have a hope of being applied to practical
situations it is important to take into account the computational constrains of real world systems, as argued in
\cite[Section 4.1]{Williams21}. To this end we will sometimes consider constrained classes of transducers, which in principle are arbitrary subsets of all transducers $\constrainedTransducers \subseteq \genericTransducers$, but in practice will be somewhat less arbitrary. In this section we will describe some properties such classes can have and define some particularly useful ones.

The idea is that a constrained class of transducers represents the set of all transducers that respect some kind of practical constraint on computation, such as a limited memory.
In Section~\ref{sec.teleo} we will address constrained optimality by considering transducers that are optimal within such a class.

 A choice presents itself, however: if we want to use a constrained class as a reference class for optimality in this way, how should we consider it to change under evolution?
We should not in general say the same class is still the
  reference class after the evolution, since it might no longer contain the
  evolved transducer. We make a choice and say that after evolving a transducer
  $\transducer \colon \constrainedTransducers$ by a specific
  input-output pair $\left( i, o \right) \in \inputSpace \times \outputSpace$
  the proper reference class for the evolved transducer $\transducer \evolution
  \left( i, o \right)$ is $\constrainedTransducers \evolution \left( i, o
  \right)$. We define this for arbitrary trajectories.

  \begin{definition}
    \label{def.constrainedEvolution}
    Given a set of constrained transducers $\constrainedTransducers \subseteq
    \genericTransducers$ and a trajectory $\interleaved{i}{o}$, we define
    \begin{equation}
      \label{eq.constrainedEvolution}
      \constrainedTransducers \evolution \interleaved{i}{o} = \left\{ \transducer \colon \genericTransducers \mid \transducer = \transducer' \evolution \interleaved{i}{o} \textrm{ for some } \transducer' \colon \constrainedTransducers \right\}
    \end{equation}
    In other words it is the image of the evolution by this trajectory, but only
    for valid evolutions.
  \end{definition}

  We could have used a more
  general notion here, and many of the theorems would have still worked, but we
  have picked this one in the interest of keeping the paper focused. The
  definition we are using essentially means we are always comparing transducers
  to others with the same observable history.

\subsubsection{Splicing transducers}

In this section we introduce the operation of \emph{splicing} transducers along
a trajectory. We will later use the property of a constrained set of transducers
being closed under splicing in a specific way as an assumption in our main theorem.

\newcommand{\splicedAlong}[1]{\curlyvee_{#1}}

\begin{definition}[Splicing]
  Given a transducer $\transducer \colon \genericTransducers$ and
  a trajectory $\interleaved{ \elementString{i}}{ \elementString{o} } \in \left(
    \inputSpace \times \outputSpace \right)^*$ valid for evolving it, we define
  \emph{$\transducer$ spliced with $\transducer' \colon \genericTransducers$
    along $\interleaved{ \elementString{i}}{ \elementString{o} }$}, written $\pi\splicedAlong{\interleaved{\elementString{i}}{\elementString{o}}}\pi'$, inductively, with the base
  \begin{equation}
   \transducer \splicedAlong{\interleaved{ \emptyString }{ \emptyString }} \transducer' = \transducer'
  \end{equation}
  and the step
  \begin{equation}
   \transducer \splicedAlong{\interleaved{ i \cdot \elementString{i}}{ o \cdot \elementString{o} }} \transducer' =
    \left( \probability_{\transducer}, \left( j, q \right) \mapsto \begin{cases}
      \transducer \evolution \left( i, o \right) \splicedAlong{\interleaved{ \elementString{i}}{ \elementString{o} }} \transducer' & \textrm{ when } \left( j, q \right) = \left( i, o \right),\\
      \transducer \evolution \left( j, q \right) & \textrm{ otherwise,}
    \end{cases}
  \right).
  \end{equation}
\end{definition}

In other words the resulting transducer behaves exactly like $\transducer$,
except after the specified trajectory it starts behaving like $\transducer'$.

\begin{definition}[Closed under trajectory splicing]
 \label{def.cuts}
  We say that a set of constrained transducers $\constrainedTransducers\subseteq
  \genericTransducers$ is closed under trajectory splicing if for
  any transducers $\transducer, \transducer' \colon \constrainedTransducers$ and
  any trajectory $\interleaved{ \elementString{i}}{ \elementString{o} } \in \left(
    \inputSpace \times \outputSpace \right)^*$ valid for evolving them both, the
  transducer $\transducer \splicedAlong{\interleaved{ \elementString{i}}{
      \elementString{o} }} \left( \transducer' \evolution \interleaved{
      \elementString{i}}{ \elementString{o} } \right)$ is also in $\constrainedTransducers$.
\end{definition}

The above definition is straightforward, but it is not a coinductive definition.
However, being closed under trajectory splicing can also be defined in a
coinductive way, and we will show this is equivalent.

\begin{definition}
  \label{def.cutsCounductive}
  We say that a set of constrained transducers
  $\constrainedTransducers\subseteq \genericTransducers$ is closed under
  trajectory splicing if given any transducer $\transducer \colon
  \constrainedTransducers$, any input-output pair $\left( i, o \right)\in
  \inputSpace \times \outputSpace$ valid for evolving it and any transducer $\transducer' \colon \constrainedTransducers
  \evolution \left( i, o \right)$, the spliced transducer
  $\transducer \splicedAlong{\left( i, o \right)} \transducer'$ also belongs to
  $\constrainedTransducers$. In addition, we require that  $\constrainedTransducers
  \evolution \left( j, q \right)$ is  closed under trajectory splicing for
  all input-output pairs $\left( j, q \right) \in \inputSpace \times \outputSpace$.
\end{definition}

Note that the latter part of this definition makes it obvious that any
constrained class of transducers that is closed under trajectory splicing
remains so after an evolution by an arbitrary trajectory.

  \begin{proof}[Proof of definition equivalence]
    First note that being closed under trajectory splicing for the empty trajectory
    is trivial for any set, thus we will only consider nonempty trajectories.

  We start by proving that constrained transducer sets that have the property
  from Definition~\ref{def.cutsCounductive} are closed under trajectory
  splicing, inductively on the length of the trajectory. We get the base of the
  induction for free from the empty trajectory, so we only need to consider
  trajectories of the form $\interleaved{i \cdot \elementString{i}}{ o \cdot
  \elementString{o}}$. By applying the definitions of splicing and evolution a
  couple of times we get
  \begin{equation}
    \transducer \splicedAlong{\interleaved{i \cdot \elementString{i}}{ o \cdot \elementString{o}}} \left( \transducer' \evolution \interleaved{i \cdot \elementString{i}}{ o \cdot \elementString{o}} \right) =
    \transducer \splicedAlong{\left( i, o \right)} \left( \left( \transducer \evolution \left( i, o \right) \right) \splicedAlong{\interleaved{ \elementString{i}}{ \elementString{o}}} \left( \transducer' \evolution \left( i, o \right) \right) \evolution \interleaved{ \elementString{i}}{ \elementString{o}} \right).
  \end{equation}
  Since $\constrainedTransducers \evolution \left( i, o \right)$ is closed under
  trajectory splicing, the outer parentheses on the right hand side is in
  $\constrainedTransducers \evolution \left( i, o \right)$ by induction, which
  makes the entire expression be in $\constrainedTransducers$ by the first
  property from Definition \ref{def.cutsCounductive}, as required.

  For the converse note that every element of $\constrainedTransducers
  \evolution \left( i, o \right)$ is of the form $\transducer' \evolution \left(
  i, o \right)$, thus we get the first condition directly from $\transducer
  \splicedAlong{\left( i, o \right)} \left( \transducer' \evolution \left( i, o
  \right) \right)$ being in $\constrainedTransducers$. For the second condition
  consider a pair $\left( i, o \right)$, and the equation
  \begin{equation}
    \left( \transducer \splicedAlong{\interleaved{i \cdot \elementString{i}}{ o \cdot \elementString{o}}} \transducer' \evolution \interleaved{i \cdot \elementString{i}}{ o \cdot \elementString{o}} \right) \evolution \left( i, o \right) =
    \left( \transducer \evolution \left( i, o \right) \splicedAlong{\interleaved{\elementString{i}}{ \elementString{o}}} \left( \transducer' \evolution \left( i, o \right) \right) \evolution \interleaved{\elementString{i}}{ \elementString{o}} \right).
  \end{equation}
  The first parentheses on the left hand side are in $\constrainedTransducers$
  by assumption, so the entire expression is in $\constrainedTransducers
  \evolution \left( i, o \right)$. On the right hand side we can see arbitrary
  transducers from $\constrainedTransducers$ evolved by $\left( i, o \right)$,
  that is arbitrary transducers in $\constrainedTransducers \evolution \left( i,
  o \right)$. Since the expression is in $\constrainedTransducers \evolution
  \left( i, o \right)$ and the trajectories $\interleaved{\elementString{i}}{
  \elementString{o}}$ are arbitrary this is exactly being closed under
  trajectory splicing for $\constrainedTransducers \evolution \left( i, o
  \right)$, so this set satisfies the property from Definition
  \ref{def.cutsCounductive} by coinduction.
  \end{proof}

A similar reasoning to the last part of the above proof can be used to show that
if a class can be defined purely coinductively, i.e.\ by constructing its
elements so that they can evolve into an arbitrary member of this class, then it
is closed under trajectory splicing. This includes the class of all transducers,
although that also follows from the fact that splicing is well-defined.

  To illustrate this property we move on to some examples.

\begin{example}[i.i.d.\ transducers]
  \label{ex.constant}
  An \emph{independent and identically distributed transducer} or \emph{i.i.d.\ transducer} is one that can be defined as
  \begin{equation}
    \transducer = \left( p, \left( i, o \right) \mapsto \transducer \right),
  \end{equation}
  for some $p \in \distributions{\outputSpace}$. Note that while a single
  i.i.d.\ transducer is defined coinductively, the whole class cannot be.

  i.i.d.\ transducers are not closed under trajectory splicing if the support of
  $p$ has more than one element, since if we replaced the future transducer by
  another i.i.d.\ one the resulting transducer would no longer be i.i.d.
\end{example}
\newcommand{\deterministicTransducers}{D}
\begin{example}[Deterministic transducers]
  \label{ex.deterministic}
		\hspace{1em} A \emph{deterministic transducer} $\transducer \colon \deterministicTransducers \subset \genericTransducers$ is one for which the associated probability distribution is a one-point distribution and the transition function only takes values in further deterministic transducers $\deterministicTransducers$.

  This is a purely coinductive definition, so deterministic transducers are
  closed under trajectory splicing.

  We will also sometimes refer to intersections of this class with other classes by adding the adjective `deterministic' to the name of the given class.
\end{example}
More generally any constraint on probability distributions independent of any
transducer structure (e.g.\ minimal probability assigned to all possibilities,
uniform over a subset, etc.) gives rise to a class of transducers closed
under trajectory splicing. However, this is not a necessary condition, as the
following class demonstrates.
\begin{example}[Smearing transducers]
  A \emph{smearing transducer} of order $k \in \N$ is defined as one that has an associated probability with a support with at most $k$ elements, and its transition function takes values in smearing transducers of order $k + 1$.

  We assert that smearing transducers of order $k$ are closed under trajectory splicing, because only the length of the history impacts what transitions are possible.
\end{example}
\begin{example}[One-flip transducers]
  \label{ex.oneFlip}
  A \emph{one-flip transducer} is one for which the associated probability distribution either is a one-point distribution and the transition function takes values within one-flip transducers, or an arbitrary distribution and the transition function takes values in deterministic transducers. Thus these transducers can have at most one associated random variable not be deterministic (they have `one coin flip', hence the name).

  One-flip transducers are not closed under trajectory splicing, because once they use their coin flip they cannot have a transition lead to a one-flip transducer that has not yet used the coinflip. They will also be an important example for another reason later.
\end{example}

\subsubsection{Unifilar machines}
\label{sec.unifilarMachines}

\newcommand{\unifilarStateSet}{X}
\newcommand{\unifilarState}{x}
\newcommand{\unifilarOutputSet}{\distributions{\outputSpace}}
\newcommand{\unifilarOutput}[1]{\varphi_{#1}}
\newcommand{\explicitUnifilarTransitionSet}[2]{\inputSpace \times \supp\left( #1 \right) \to #2}
\newcommand{\unifilarTransitionSet}[1]{\explicitUnifilarTransitionSet{\unifilarOutput{}}{#1}}
\newcommand{\unifilarTransition}[1]{\theta_{#1}}
\newcommand{\unifilarMachine}{\left( \unifilarOutput{}, \unifilarTransition{} \right)}
\newcommand{\transducerFunctor}[1]{\sum_{\unifilarOutput{} \colon \unifilarOutputSet} \unifilarTransitionSet{#1} }
\newcommand{\unifilarMachineSetDefinition}[1]{#1 \to \transducerFunctor{#1}}
\newcommand{\unifilarMachineSet}[1]{ U_{#1} }
In this and the following sections we introduce several formalisms for modelling state-based dynamics, which can easily be translated into transducers. We start with unifilar machines, as they are used as an intermediate step in later sections.

\begin{definition}[Unifilar machines]
  \label{def.unifilarMachines}
  Let $\unifilarStateSet$ be an arbitrary set called the state set. We will call $\unifilarMachine \colon \unifilarMachineSetDefinition{\unifilarStateSet}$ a \emph{unifilar machine}. Given a \emph{state} $\unifilarState \colon \unifilarStateSet$  we will write $\left( \unifilarOutput{\unifilarState}, \unifilarTransition{\unifilarState} \right) = \unifilarMachine\left( x \right)$ and call it a pointed unifilar machine. The elements of the pair $\unifilarMachine$ will be called its \emph{output function}, and \emph{transition function} respectively. We denote the whole set of unifilar machines over $\unifilarStateSet$ as $\unifilarMachineSet{\unifilarStateSet}$.
\end{definition}

Unifilar machines take their name from the similar concept of unifilarity defined in the context of $\varepsilon$-transducers in \cite{barnett2015-epsilon}.

A reader familiar with more explicit treatments of coinduction may notice that a unifilar machine is a coalgebra of the functor $\transducerFunctor{{-}}$, while, as already mentioned, the set of all transducers, $\genericTransducers$, is the carrier of the final coalgebra of this functor. Our coinductive definition is equivalent to specifying the final coalgebra of this functor.
This will not make any significant difference below, as we explicitly define a map into transducers anyway.

\newcommand{\unifilarIntoTransducers}{s}
\begin{definition}
  \label{def.unifilarMapping}
  Define a mapping $\unifilarIntoTransducers \colon \unifilarStateSet \times \unifilarMachineSet{\unifilarStateSet} \to \genericTransducers$ from pointed unifilar machines to transducers corecursively as
  \begin{equation}
    \unifilarIntoTransducers\left(
      \unifilarState,
      \unifilarMachine
    \right) = \left(
      \unifilarOutput{\unifilarState},
      \left( i, o \right) \mapsto \unifilarIntoTransducers\left(
        \unifilarTransition{\unifilarState}\left( i, o \right),
        \unifilarMachine
      \right)
    \right).
  \end{equation}
\end{definition}

\subsubsection{Unifilar finite state machines}
\newcommand{\unifilarFiniteStateTransducers}[1]{\mathrm{UFS}_{#1}}
We will sometimes want to consider certain subsets of transducers, corresponding to transducers that can be constructed in a particular way. In this section we will see our first example. We consider a special case of unifilar machines, which represents a formalism for limited memory (but unlimited time) computation. We then discuss the transducers that can be constructed from these machines.
\begin{definition}[Unifilar finite state machines]
  Given a nonempty finite state set of size $n$ $\unifilarStateSet$, we will call $\unifilarMachine \in \unifilarMachineSet{\unifilarStateSet}$ a \emph{unifilar finite state (UFS) machine}.
\end{definition}
We interpret this state set as all the possible states of memory, and output and transition functions as code that maps a memory state to a probability distribution over outputs and a memory state plus an input and output into a new memory state respectively. In this abstraction the computational process has access to a randomness source.

Unifilar finite state machines are roughly equivalent to deterministic finite state (Moore) machines, although some of the considered sets are infinite in contrast to the classical definition.

We can now map these unifilar transducers into transducers using $\unifilarIntoTransducers$ defined in the previous section. As one might expect the state set will become irrelevant through this mapping, so we will encounter the problem that there are multiple distinct UFS machines that give rise to the same transducers (e.g.\ by composing the machine with a bijection to another set), so we will have to keep that in mind when discussing examples.

\begin{definition}[Unifilar finite state transducers]
  We call the image $\unifilarFiniteStateTransducers{n} = \unifilarIntoTransducers\left( \unifilarStateSet \times \unifilarMachineSet{\unifilarStateSet} \right)$ \emph{unifilar finite state (UFS) transducers}.
\end{definition}

It is easy to see that for all $n \in \Z_+$ $\unifilarFiniteStateTransducers{n} \subseteq \unifilarFiniteStateTransducers{n+1}$. It suffices to notice that the subset of UFS machines that identifies two elements of the memory set (i.e the machine behaves identically regardless of which of these elements is passed to it) gives rise to the UFS transducers with one less possible memory state.

It is similarly easy to see that for $n = 1$ UFS transducers are exactly i.i.d.
transducers, so they are not closed under trajectory splicing if the output set
has more than one element. More generally no UFS transducers are closed under
trajectory splicing if that's the case.

\subsubsection{Stochastic Moore machines}
\label{sec.stochasticMooreMachines}

\newcommand{\mooreStateSet}{Y}
\newcommand{\mooreState}{y}
\newcommand{\mooreStateMixture}{\psi}
\newcommand{\mooreOutputSet}[1]{\Phi_{#1}}
\newcommand{\mooreOutput}{\varphi}
\newcommand{\mooreTransitionSet}[1]{\Theta_{#1}}
\newcommand{\mooreTransition}{\theta}
\newcommand{\mooreMachineSet}[1]{\distributions{#1} \times \mooreOutputSet{#1} \times \mooreTransitionSet{#1} }
In this section we introduce yet another notion of state-based dynamics. We won't be using any object constructed here in the rest of the paper, but they serve as important motivation and context for interpreting the main result.

This is the only section explicitly using measure theory.
For the most part, a reader unfamiliar with measure theory may ignore the measure theoretic details.
In previous sections we used $\distributions{A}$ to mean the set of probability distributions over a finite set $A$.
In this section we extend this notation, so that $\distributions{\mooreStateSet}$ means the space of probability measures over a measurable space $\mooreStateSet$, where $\distributions{\mooreStateSet}$ itself is to be viewed as a measurable space, equipped with the smallest $\sigma$-algebra such that for every event in the $\sigma$-algebra of $S$, the map $\distributions{\mooreStateSet}\to [0,1]$ given by $\mu\mapsto \mu(S)$ is measurable.
In category theory terms this means that in this section $P$ stands for
the functor underlying the Giry monad \cite{Giry82, Fritz2020}.
When $X$ is a finite set, $\distributions{X}$ still amounts to the set of probability distributions over $X$.
We will continue to assume $I$ and $O$ are finite sets.

Recall that for measurable spaces $X$ and $Y$, a \emph{Markov kernel} from $X$ to $Y$ is a measurable function~$X\to \distributions{Y}$.  Given a Markov kernel $q\colon X\to \distributions{Y}$ in which $Y$ is finite, we will write $q(y\parallel x)$ for the probability of the outcome $y$ according to the probability distribution $q(x)$, as in section \ref{sec.unrolling}.

We first define a basic notion of a machine with an internal state:
\newcommand{\mooreSet}{M_Y}
\begin{definition}[Stochastic Moore machines]
  \label{def.stochasticMooreMachine}
  A stochastic Moore machine consists of a measurable space $\mooreStateSet$ called the \emph{state space}, together with a Markov kernel $\mooreOutput\colon \mooreStateSet\to \distributions{\outputSpace}$ called the \emph{output kernel}, a Markov kernel $\mooreTransition\colon \mooreStateSet \times \inputSpace \to \distributions{\mooreStateSet}$ called the \emph{transition kernel} and a probability measure $\mooreStateMixture\in \distributions{\mooreStateSet}$ called the \emph{initial distribution}. We call the triple $\left(\mooreStateMixture, \mooreOutput, \mooreTransition \right)$ a \emph{stochastic Moore machine} over $\mooreStateSet$. We denote the set of stochastic Moore machines over $\mooreStateSet$ as $\mooreSet$.
\end{definition}

The idea is that a stochastic Moore machine has an internal state, which is an element of $\mooreStateSet$.
It produces an output stochastically according to its output kernel.
Then, after being given an input, it transitions stochastically to a new state, according to its transition function.
In general the state of a stochastic Moore machine might not be known.
(And even if it is initially known, it will generally be in an unknown state after a transition, assuming the observer can't see inside the machine.)
We define a stochastic Moore machine to be equipped with an initial probability measure over its state space, to be thought of as a prior.

In contrast to unifilar machines, which can only introduce nondeterminism via the output kernel, stochastic Moore machines can have nondeterministic transitions that cannot depend on the output.
Despite this difference, given a stochastic Moore machine there is a way to define a unifilar machine with the same behaviour:
\newcommand{\mooreToUnifilar}{z}
\begin{definition}
  \label{def.mooreToUnifilar}
  Given a state space $\mooreStateSet$, we can map stochastic Moore machines over $\mooreStateSet$ into pointed unifilar machines over $\distributions{\mooreStateSet}$. We denote this map
  \begin{equation*}
    \mooreToUnifilar\colon \mooreSet \to \left( \distributions{\mooreStateSet} \times \unifilarMachineSet{\distributions{\mooreStateSet}} \right).
  \end{equation*}
  It is defined on the components as
  \begin{equation}
    \mooreToUnifilar\left( \mooreStateMixture \right) = \mooreStateMixture,
  \end{equation}
  on the state mixture, with the following operation on the output kernels
  \begin{equation}
    \label{eq.stateOutput}
    \mooreToUnifilar\left( \mooreOutput \right)\left( \mooreStateMixture \right) =
    \int_{\mooreStateSet} \mooreOutput\left( \mooreState \right) \mooreStateMixture\left( \mooreState \right) \mathrm{d}\mooreState,
  \end{equation}
  and the somewhat more complex operation on transition kernels
  \begin{equation}
    \label{eq.stateTransition}
    \mooreToUnifilar\left( \mooreTransition \right)\left( \mooreStateMixture, i, o \right) =
    \frac{1}{\mooreToUnifilar\left( \mooreOutput \right)\left( o \parallel \mooreStateMixture \right)}
    \int_{\mooreStateSet} \mooreTransition\left( \mooreState, i \right) \mooreOutput\left( o \parallel \mooreState \right) \mooreStateMixture\left( \mooreState \right) \mathrm{d}\mooreState.
  \end{equation}
\end{definition}
A remark is in order on the meaning of this map.
On the one hand, intuitively, the pointed unifilar machine $\mooreToUnifilar(\mooreStateMixture, \mooreOutput, \mooreTransition)$ `behaves the same' as the stochastic Moore machine $(\mooreStateMixture, \mooreOutput, \mooreTransition)$, but on the other hand it is a different type of machine and has a different state space.
The states of the unifilar machine consist of probability distributions over $\mooreStateSet$ instead of elements of $\mooreStateSet$.
One way to think of this is that these distributions represent an agent's state of knowledge about the state of the corresponding Moore machine.
Initially this state of knowledge is given by the stochastic Moore machine's initial distribution.
Then equation \eqref{eq.stateOutput} can be seen as a prediction of the Moore machine's next output, and \eqref{eq.stateTransition} can be seen as a Bayesian update, producing a posterior over the Moore machine's states, conditioned on the given input and the observed output.
\begin{remark}
  \label{rem.mooreUpdateIsFiltering}
  This updating of a probability distribution over $\mooreStateSet$ may
  be seen as an instance of Bayesian filtering with an additional input \cite{Virgo2023unifilar}.
  This perspective will be further justified in Section \ref{sec.evolutionIsFiltering}.
\end{remark}
\newcommand{\mooreIntoTransducers}{\unifilarIntoTransducers \circ \mooreToUnifilar}
Now we can use the composition $\mooreIntoTransducers$ to map stochastic Moore machines into transducers. As one would expect, this mapping respects mixtures in the first argument, we will show that in two steps.
\begin{lemma}
  \label{lem.unifilarRespectsMixtures}
  \newcommand{\kthOutputProbability}[1]{ \unifilarOutput{\mooreStateMixture_{#1}}\left( o \right)}
  \newcommand{\kthUnifilar}[1]{\left( \mooreStateMixture_{#1}, \unifilarMachine \right)}
  \newcommand{\kthUnifilarIntoTransducers}[1]{\unifilarIntoTransducers\kthUnifilar{#1}}
  Suppose we are given a machine $\unifilarMachine \colon \unifilarMachineSet{\distributions{\mooreStateSet}}$ for which the output function respects mixtures and the transition function satisifes
  \begin{equation}
    \label{eq.transitionReactsToMixture}
    \unifilarTransition{\mixture{\alpha_}{\mooreStateMixture_}}\left( i, o \right) =
    \sum_{k = 1}^{\infty}
    \frac{\alpha_{k} \kthOutputProbability{k}}{\explicitMixture{l}{\alpha_}{\kthOutputProbability}}
    \unifilarTransition{\mooreStateMixture_{k}}\left( i, o \right),
  \end{equation}
  for any numbers $\alpha_{k} \geq 0$ such that $\sum_{k = 0}^{\infty} \alpha_{k} = 1$. Then for any such numbers $\unifilarIntoTransducers$ respects mixtures of (distributions of) states for an unifilar machine with these output and transition functions, that is
  \begin{equation}
    \label{eq.unifilarRespectsMixtures}
    \unifilarIntoTransducers\left( \mixture{\alpha_}{\mooreStateMixture_}, \unifilarMachine \right) =
    \mixture{\alpha_}{\kthUnifilarIntoTransducers}.
  \end{equation}

  \begin{proof}
    Starting with the left hand side and expanding the definition of $\unifilarIntoTransducers$
    \begin{equation}
      \unifilarIntoTransducers\left( \mixture{\alpha_}{\mooreStateMixture_}, \unifilarMachine \right) =
      \left(
        \unifilarOutput{\mixture{\alpha_}{\mooreStateMixture_}},
        \left( i, o \right) \mapsto
        \unifilarIntoTransducers\left(
          \unifilarTransition{\mixture{\alpha_}{\mooreStateMixture_}}\left(
            i,
            o
          \right),
          \unifilarMachine
        \right)
      \right).
    \end{equation}
    \newcommand{\kthOutput}[1]{ \unifilarOutput{\mooreStateMixture_{#1}}}
    Applying the fact that $\unifilarOutput{}$ respects mixtures and the property from equation \eqref{eq.transitionReactsToMixture} we get
    \begin{equation}
      \left(
        \mixture{\alpha_}{\kthOutput},
        \left( i, o \right) \mapsto
        \unifilarIntoTransducers\left(
          \sum_{k = 1}^{\infty}
          \frac{\alpha_{k} \kthOutputProbability{k}}{\explicitMixture{l}{\alpha_}{\kthOutputProbability}}
          \unifilarTransition{\mooreStateMixture_{k}}\left( i, o \right),
          \unifilarMachine
        \right)
      \right).
    \end{equation}
    Using the coinductive hypothesis we can extract the mixture from the transition
    \begin{equation}
      \left(
        \mixture{\alpha_}{\kthOutput},
        \left( i, o \right) \mapsto
        \sum_{k = 1}^{\infty}
        \frac{\alpha_{k} \kthOutputProbability{k}}{\explicitMixture{l}{\alpha_}{\kthOutputProbability}}
        \unifilarIntoTransducers\left(
          \unifilarTransition{\mooreStateMixture_{k}}\left( i, o \right),
          \unifilarMachine
        \right)
      \right).
    \end{equation}
    This is exactly the definition of a mixture of transducers, giving us
    \begin{equation}
      \sum_{k = 1}^{\infty} \alpha_{k}
      \left(
        \kthOutput{k},
        \left( i, o \right) \mapsto
        \unifilarIntoTransducers\left(
          \unifilarTransition{\mooreStateMixture_{k}}\left( i, o \right),
          \unifilarMachine
        \right)
      \right).
    \end{equation}
    Collapsing the definition of $\unifilarIntoTransducers$, gets us the desired
    \begin{equation}
      \mixture{\alpha_}{\kthUnifilarIntoTransducers}.
    \end{equation}
  \end{proof}
\end{lemma}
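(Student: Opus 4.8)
The plan is to prove~\eqref{eq.unifilarRespectsMixtures} coinductively on the structure of transducers: two transducers are equal once we check that their $\probability$-components agree and that their $\transition$-components agree at every valid input--output pair, where in the second check we may use the coinductive hypothesis, i.e.\ the statement of the lemma itself applied to the transducers reached after one constructor has been emitted. So I would write both sides as a constructor $(p,t)$, match the $p$'s directly, and match the $t$'s using the coinductive hypothesis.

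First I would unfold the left-hand side using Definition~\ref{def.unifilarMapping}; writing $\bar{\psi}$ for the mixture $\mixture{\alpha_}{\mooreStateMixture_}$, this produces the constructor with $\probability$-component $\unifilarOutput{\bar{\psi}}$ and $\transition$-component $(i,o)\mapsto\unifilarIntoTransducers(\unifilarTransition{\bar{\psi}}(i,o),\unifilarMachine)$. For the $\probability$-component, the hypothesis that $\unifilarOutput{}$ respects mixtures gives $\unifilarOutput{\bar{\psi}}=\sum_{k=1}^{\infty}\alpha_{k}\unifilarOutput{\mooreStateMixture_{k}}$, and since $\probability_{\unifilarIntoTransducers(\mooreStateMixture_{k},\unifilarMachine)}=\unifilarOutput{\mooreStateMixture_{k}}$ by Definition~\ref{def.unifilarMapping}, this is exactly the $\probability$-component that Definition~\ref{def.transMixture} prescribes for the mixture $\sum_{k=1}^{\infty}\alpha_{k}\unifilarIntoTransducers(\mooreStateMixture_{k},\unifilarMachine)$. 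I would also record that $\supp(\unifilarOutput{\bar{\psi}})=\bigcup_{k\,:\,\alpha_{k}>0}\supp(\unifilarOutput{\mooreStateMixture_{k}})$, so that for every $(i,o)$ with $o$ in this support the quantity $\sum_{l=1}^{\infty}\alpha_{l}\unifilarOutput{\mooreStateMixture_{l}}(o)$ is strictly positive and all the Bayes-type weights below are well defined.

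For the $\transition$-component at such an $(i,o)$, I would invoke hypothesis~\eqref{eq.transitionReactsToMixture} to rewrite $\unifilarTransition{\bar{\psi}}(i,o)$ as the mixture of distributions over $\mooreStateSet$ with weights $\beta_{k}=\frac{\alpha_{k}\unifilarOutput{\mooreStateMixture_{k}}(o)}{\sum_{l=1}^{\infty}\alpha_{l}\unifilarOutput{\mooreStateMixture_{l}}(o)}$, which are nonnegative and sum to $1$; the $\transition$-component then reads $\unifilarIntoTransducers\bigl(\sum_{k=1}^{\infty}\beta_{k}\,\unifilarTransition{\mooreStateMixture_{k}}(i,o),\,\unifilarMachine\bigr)$. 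Since $\unifilarMachine$ still satisfies both hypotheses of the lemma, the coinductive hypothesis --- applied with coefficients $\beta_{k}$ and states $\unifilarTransition{\mooreStateMixture_{k}}(i,o)$ --- lets me pull the mixture out through $\unifilarIntoTransducers$, giving $\sum_{k=1}^{\infty}\beta_{k}\,\unifilarIntoTransducers(\unifilarTransition{\mooreStateMixture_{k}}(i,o),\unifilarMachine)$. On the other side, combining Definition~\ref{def.transMixture} with Definition~\ref{def.unifilarMapping} shows that the $\transition$-component of $\sum_{k=1}^{\infty}\alpha_{k}\unifilarIntoTransducers(\mooreStateMixture_{k},\unifilarMachine)$ at $(i,o)$ equals $\sum_{k=1}^{\infty}\frac{\alpha_{k}\unifilarOutput{\mooreStateMixture_{k}}(o)}{\sum_{l=1}^{\infty}\alpha_{l}\unifilarOutput{\mooreStateMixture_{l}}(o)}\,\unifilarIntoTransducers(\unifilarTransition{\mooreStateMixture_{k}}(i,o),\unifilarMachine)$ --- the very same expression --- which closes the coinduction.

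The step I expect to require the most care is the appeal to the coinductive hypothesis, since it is used not on ``the same'' transducer but on a fresh instantiation of the lemma, with new weights $\beta_{k}$ and new states $\unifilarTransition{\mooreStateMixture_{k}}(i,o)$. This is legitimate in the style of \cite{CoinductionCzajka15} precisely because the lemma is stated with the coefficients and the states universally quantified and because a complete constructor $(p,t)$ is emitted before the recursive appeal, so the guardedness requirement is met; everything else is the routine term-by-term matching of the two Bayes-weighted mixtures, together with the observation that the normalising denominators do not vanish on the support.
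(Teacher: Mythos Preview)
Your proposal is correct and follows essentially the same coinductive argument as the paper: unfold $\unifilarIntoTransducers$ on the mixture state, use the two hypotheses on $\unifilarOutput{}$ and $\unifilarTransition{}$ to rewrite the $\probability$- and $\transition$-components, invoke the coinductive hypothesis on the Bayes-weighted mixture of next states, and recognise the result as the definition of a mixture of transducers. The paper presents this as a single chain of equalities from left to right, while you split it into matching the two components separately and add explicit remarks about supports, non-vanishing denominators, and guardedness; these are welcome clarifications but not a different route.
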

\begin{lemma}
  \label{lem.stateMappingRespectsMixtures}
  \newcommand{\kthMooreIntoTransducers}[1]{\mooreIntoTransducers\left( \mooreStateMixture_{#1}, \mooreOutput, \mooreTransition \right)}
  Given state distributions $\mooreStateMixture_{k} \colon \distributions{\unifilarStateSet}$ and numbers $\alpha_{k} \geq 0$, such that $\sum_{k = 0}^{\infty} \alpha_{k} = 1$ we have
  \begin{equation}
    \mooreIntoTransducers\left(
      \mixture{\alpha_}{\mooreStateMixture_},
      \mooreOutput,
      \mooreTransition
    \right) =
    \mixture{\alpha_}{\kthMooreIntoTransducers}.
  \end{equation}

  \begin{proof}
    \newcommand{\kthIntegralByStateDistributionOf}[2]{\int_{\mooreStateSet} #1 \mooreStateMixture_{#2}\left( \mooreState \right) \mathrm{d}\mooreState}
    \newcommand{\kthMappedMooreOutput}[1]{\mooreToUnifilar\left( \mooreOutput \right)\left( \mooreStateMixture_{#1} \right)}
    We will prove that $\mooreToUnifilar\left( \mooreStateMixture, \mooreOutput, \mooreTransition \right)$ satisfies the conditions of Lemma \ref{lem.unifilarRespectsMixtures} and, noting that it is the identity on the first component, an application of that lemma finishes the proof.

    Let us start with the output function respecting mixtures
    \begin{multline}
      \mooreToUnifilar\left( \mooreOutput \right)\left( \mixture{\alpha_}{\mooreStateMixture_} \right) =
      \int_{\mooreStateSet} \mooreOutput\left( \mooreState \right) \mixture{\alpha_}{\mooreStateMixture_}\left( \mooreState \right) \mathrm{d}\mooreState \\=
      \mixture{\alpha_}{\kthIntegralByStateDistributionOf{\mooreOutput\left( \mooreState \right)}} =
      \mixture{\alpha_}{\kthMappedMooreOutput}.
    \end{multline}

    \newcommand{\kthMappedMooreTransition}[1]{\mooreToUnifilar\left( \mooreTransition \right)\left( \mooreStateMixture_{#1}, i, o \right)}
    \newcommand{\kthMappedMooreOutputAtO}[1]{\mooreToUnifilar\left( \mooreOutput \right)\left( o \parallel \mooreStateMixture_{#1} \right)}
    Having this we can compute
    \begin{multline}
      \mooreToUnifilar\left( \mooreTransition \right)\left( \mixture{\alpha_}{\mooreStateMixture_}, i, o \right) =\\
      \frac{1}{\explicitMixture{l}{\alpha_}{\kthMappedMooreOutputAtO}}
      \int_{\mooreStateSet} \mooreTransition\left( \mooreState, i \right) \mooreOutput\left( o \parallel \mooreState \right) \mixture{\alpha_}{\mooreStateMixture_}\left( \mooreState \right) \mathrm{d}\mooreState
    \end{multline}
    writing $1 = \frac{\kthMappedMooreOutputAtO{k}}{\kthMappedMooreOutputAtO{k}}$ and performing some simple operations on sums and integrals we get
    \begin{multline}
      \sum_{k = 0}^{\infty}
      \frac{\alpha_{k} \kthMappedMooreOutputAtO{k}}{\explicitMixture{l}{\alpha_}{\kthMappedMooreOutputAtO}}
      \frac{1}{\kthMappedMooreOutputAtO{k}}
      \int_{\mooreStateSet} \mooreTransition\left( \mooreState, i \right) \mooreOutput\left( o \parallel \mooreState \right) \mooreStateMixture_{k}\left( \mooreState \right) \mathrm{d}\mooreState \\=
      \sum_{k = 0}^{\infty}
      \frac{\alpha_{k} \kthMappedMooreOutputAtO{k}}{\explicitMixture{l}{\alpha_}{\kthMappedMooreOutputAtO}}
      \kthMappedMooreTransition{k}.
    \end{multline}
    It remains to apply Lemma \ref{lem.unifilarRespectsMixtures} and we get the desired result.
  \end{proof}
\end{lemma}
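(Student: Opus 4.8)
The plan is to reduce the claim to Lemma~\ref{lem.unifilarRespectsMixtures}. By Definition~\ref{def.mooreToUnifilar}, $\mooreToUnifilar$ acts as the identity on the initial-distribution component, so the pointed unifilar machine $\mooreToUnifilar\left( \sum_{k} \alpha_{k} \mooreStateMixture_{k}, \mooreOutput, \mooreTransition \right)$ is the state $\sum_{k} \alpha_{k} \mooreStateMixture_{k}$ sitting on top of the output function $\mooreToUnifilar\left( \mooreOutput \right)$ and transition function $\mooreToUnifilar\left( \mooreTransition \right)$, while each $\mooreToUnifilar\left( \mooreStateMixture_{k}, \mooreOutput, \mooreTransition \right)$ is the state $\mooreStateMixture_{k}$ sitting on top of the \emph{same} two functions. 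Since $\mooreIntoTransducers\left( \mooreStateMixture', \mooreOutput, \mooreTransition \right) = \unifilarIntoTransducers\left( \mooreToUnifilar\left( \mooreStateMixture', \mooreOutput, \mooreTransition \right) \right)$ for every state $\mooreStateMixture'$, the desired identity is exactly equation~\eqref{eq.unifilarRespectsMixtures} for a unifilar machine with these output and transition functions. So it suffices to check the two hypotheses of Lemma~\ref{lem.unifilarRespectsMixtures} for that machine: that $\mooreToUnifilar\left( \mooreOutput \right)$ respects mixtures, and that $\mooreToUnifilar\left( \mooreTransition \right)$ obeys equation~\eqref{eq.transitionReactsToMixture}.

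First I would dispatch the output function. Equation~\eqref{eq.stateOutput} gives $\mooreToUnifilar\left( \mooreOutput \right)\left( \mooreStateMixture \right) = \int_{\mooreStateSet} \mooreOutput\left( \mooreState \right)\,\mooreStateMixture\left( \mooreState \right)\,\mathrm{d}\mooreState$, which is linear in the measure $\mooreStateMixture$. Substituting $\mooreStateMixture = \sum_{k} \alpha_{k} \mooreStateMixture_{k}$ and pulling the countable sum through the integral, which is legitimate because every $\alpha_{k}$ and every integrand is nonnegative, yields $\sum_{k} \alpha_{k}\, \mooreToUnifilar\left( \mooreOutput \right)\left( \mooreStateMixture_{k} \right)$; hence $\mooreToUnifilar\left( \mooreOutput \right)$ respects mixtures.

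The transition function is the step I expect to carry the real content, because of its Bayes-style normaliser. Starting from equation~\eqref{eq.stateTransition} at $\mooreStateMixture = \sum_{k} \alpha_{k} \mooreStateMixture_{k}$, I would expand the numerator integral into $\sum_{k} \alpha_{k} \int_{\mooreStateSet} \mooreTransition\left( \mooreState, i \right) \mooreOutput\left( o \parallel \mooreState \right) \mooreStateMixture_{k}\left( \mooreState \right)\,\mathrm{d}\mooreState$ just as above, and rewrite the single overall normaliser $\mooreToUnifilar\left( \mooreOutput \right)\left( o \parallel \sum_{k} \alpha_{k} \mooreStateMixture_{k} \right)$ as $\sum_{l} \alpha_{l}\, \mooreToUnifilar\left( \mooreOutput \right)\left( o \parallel \mooreStateMixture_{l} \right)$ using the fact just proved. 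Then, multiplying and dividing the $k$-th summand by $\mooreToUnifilar\left( \mooreOutput \right)\left( o \parallel \mooreStateMixture_{k} \right)$ factors it into a Bayes weight, namely $\alpha_{k}\, \mooreToUnifilar\left( \mooreOutput \right)\left( o \parallel \mooreStateMixture_{k} \right)$ divided by the normaliser $\sum_{l} \alpha_{l}\, \mooreToUnifilar\left( \mooreOutput \right)\left( o \parallel \mooreStateMixture_{l} \right)$, multiplied by the term $\tfrac{1}{\mooreToUnifilar\left( \mooreOutput \right)\left( o \parallel \mooreStateMixture_{k} \right)}\int_{\mooreStateSet} \mooreTransition\left( \mooreState, i \right) \mooreOutput\left( o \parallel \mooreState \right) \mooreStateMixture_{k}\left( \mooreState \right)\,\mathrm{d}\mooreState$, which is exactly $\mooreToUnifilar\left( \mooreTransition \right)\left( \mooreStateMixture_{k}, i, o \right)$ by equation~\eqref{eq.stateTransition} again. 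This is precisely the right-hand side of equation~\eqref{eq.transitionReactsToMixture}, with the $\mooreStateMixture_{k}$ playing the role of the component states and $\mooreToUnifilar\left( \mooreOutput \right)$ that of the output function $\unifilarOutput{}$.

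With both hypotheses in hand, Lemma~\ref{lem.unifilarRespectsMixtures} applies to our unifilar machine and delivers the claimed identity. The only points needing care, both routine, are that transitions are only ever evaluated at outputs $o$ with positive probability, following the convention used throughout, which keeps every denominator nonzero, and that each interchange of a countable sum with an integral is justified by nonnegativity of all the quantities involved.
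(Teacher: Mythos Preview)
Your proposal is correct and follows essentially the same approach as the paper: reduce to Lemma~\ref{lem.unifilarRespectsMixtures} by verifying that $\mooreToUnifilar(\mooreOutput)$ respects mixtures via linearity of the integral and that $\mooreToUnifilar(\mooreTransition)$ satisfies equation~\eqref{eq.transitionReactsToMixture} via the multiply-and-divide-by-the-normaliser trick, then invoke the lemma using that $\mooreToUnifilar$ is the identity on the state component. Your version is, if anything, slightly more explicit than the paper's about justifying the sum--integral interchange and the nonvanishing of denominators.
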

We perform the two steps separately to stress that only the first proof uses coinduction. The second could not have used it directly, as $\mooreToUnifilar$ is not coinductively defined.

We conjecture that there is a $\sigma$-algebra on $\genericTransducers$ that, among other things, would allow one to define maps in the other direction, from transducers (or subsets of transducers) to stochastic Moore machines. This in turn would allow some of the machinery of \cite{Virgo2023unifilar} to be applied in our context. The existence of such a $\sigma$-algebra is not obvious, as it requires several functions, both from and into transducers to be measurable. A suitable $\sigma$-algebra on the set of transducers would also allow reasoning about uncountable mixtures of transducers.

\section{Teleo-Environments}
\label{sec.teleo}

We finally define the main objects we will be investigating.
\begin{definition}[Teleo-environment]
  \label{def.teleoEnv}
  Let $\stateSpace$ and $\actionSpace$ be nonempty finite sets called the \emph{state space} and \emph{action space} respectively. Furthermore let $\telosSpace = \left\{ \noSuccess, \success  \right\}$, called the \emph{telos channel}, with the contents of the set called \emph{nothing} and \emph{success} respectively. We then call $\environment \colon \environments$ a \emph{teleo-environment} and $\policy \colon \policies$ a \emph{policy}.
\end{definition}

One should think of `success' and `nothing' as follows: an agent's goal in interacting with a teleo-environment is to maximise its probability of achieving success \emph{at least once.}
So $\noSuccess$ does not represent failure, but only that success has not been achieved on this particular time step.
The agent does not receive the success signal as an input, so in general the agent will not know whether success has been achieved yet or not; this fact will turn out to be important.

We remark that instead of maximising the probability of success, one could define a similar set-up where an agent tries to minimise a probability of failure; we would expect similar results to hold in that case.
One could also consider an agent maximising a suitably bounded reward function, perhaps with exponential discounting, which is the more familiar set-up associated with POMDP tasks.
However, this has an important difference, in that success (or failure) probabilities combine multiplicatively rather than additively on successive time steps, and for this reason we would expect the resulting formalism to be somewhat different.
We will not consider either of these cases further.

There is a natural way to connect a policy and an environment together, getting a transducer that does not require inputs, which we will call a coupled system.

\subsection{Coupling}

\newcommand{\coupledTransducers}{\transducers{\{\star\}}{\stateSpace \times \telosSpace \times \actionSpace }}
\newcommand{\coupling}{c}
\begin{definition}[Coupling]
  \label{def.coupling}
  We define the \emph{coupling} function
  \begin{equation}
      \coupling \colon \left( \policies \right) \times \left( \environments \right) \to \coupledTransducers
  \end{equation}
  by setting the probability distribution to the product distribution
  \begin{equation}
    \probability_{\coupling\left( \policy, \environment \right)}\left( s, t, a \right) = \probability_{\policy}\left( a \right) \probability_{\environment}\left( s, t \right),
  \end{equation}
  and the transition function to a corecursive application of coupling to the transitions of the arguments
  \begin{equation}
    \transition_{\coupling\left( \policy, \environment \right)}\left( \star, \left( s, t, a \right) \right) = \coupling\left( \transition_{\policy}\left( s, a \right), \transition_{\environment}\left( a, \left( s, t \right) \right) \right).
  \end{equation}
\end{definition}
The $\star$ here is a dummy, and only possible, input to conform with the formal definition of a transducer.

The coupling operation will be crucial for our investigations, even though we will rarely refer to it explicitly. We will prove one important property it has though.
\begin{lemma}[Coupling respects mixtures]
  \label{lem.couplingOfMixtures}
  Let $\policy_{k} \colon \policies$ be policies, $\environment_{k} \colon \environments$ be environments, and $\alpha_{k}, \beta_{k} \geq 0$. The coupling function respects mixtures, that is
  \begin{equation}
    \coupling\left( \mixture{\alpha_}{\policy_}, \mixture{\beta_}{\environment_} \right) =
    \sum_{k = 1}^{\infty} \sum_{l = 1}^{\infty} \alpha_{k}\beta_{l} \coupling\left( \transducer_{k}, \environment_{l} \right).
  \end{equation}

  \begin{proof}
    Let us start with the probability
    \begin{equation}
      \probability_{\coupling\left( \mixture{\alpha_}{\policy_}, \mixture{\beta_}{\environment_} \right)}\left( s, t, a \right) = \probability_{\mixture{\alpha_}{\policy_}}\left( a \right) \probability_{\mixture{\beta_}{\environment_}} \left( s, t \right).
    \end{equation}
    Applying the definition of a mixture this becomes
    \begin{equation}
      \mixture{\alpha_}{\kthProbabilityAt{\policy}{a}} \mixture{\beta_}{\kthProbabilityAt{\environment}{s, t}} =
      \sum_{k = 1}^{\infty}\sum_{l = 1}^{\infty} \alpha_k \beta_l \probability_{\policy_k}\left( a \right) \probability_{\environment_l}\left( s, t \right),
    \end{equation}
    which after rewriting using the definition of coupling gives us exactly the definition of a probability of a mixture of couplings:
    \begin{equation}
      \sum_{k = 1}^{\infty}\sum_{l = 1}^{\infty} \alpha_{k} \beta_{l}\probability_{\coupling\left( \policy_k, \environment_{l} \right)}\left( s, t, a \right).
    \end{equation}

    Now the transition function is given by
    \begin{multline}
      \transition_{\coupling\left( \mixture{\alpha_}{\policy_}, \mixture{\beta_}{\environment_} \right)}\left( \star, \left( s, t, a \right) \right) =\\ \coupling\left( \transition_{\mixture{\alpha_}{\policy_}}\left( s, a \right), \transition_{\mixture{\beta_}{\environment_}}\left( a, \left( s, t \right) \right) \right).
    \end{multline}
    Applying the definition of a mixture it becomes
    \begin{equation}
      \coupling\left(
        \mixture{\kthCoefficientAt{\alpha}{\policy}{a}}{\kthTransitionAt{\policy}{s}{a}},
        \mixture{\kthCoefficientAt{\beta}{\environment}{s, t}}{\kthTransitionAt{\environment}{a}{s, t}}
      \right).
    \end{equation}
    Using the coinductive hypothesis and performing simple operations on sums it becomes
    \begin{equation}
      \sum_{k = 1}^{\infty}\sum_{l = 1}^{\infty} \frac{\alpha_k \beta_l \probability_{\policy_{k}}\left( a \right) \probability_{\environment_{l}}\left( s, t \right)}{\sum_{k' = 1}^{\infty} \sum_{l' = 1}^{\infty} \alpha_{k'} \beta_{l'} \probability_{\policy_{k'}}\left( a \right) \probability_{\environment_{l'}}\left( s, t \right)}
      \coupling\left( \transition_{\policy_{k}}\left( s, a \right), \transition_{\environment_{l}}\left( a, s, t \right) \right)
    \end{equation}
    and after rewriting using the definition of coupling it gives us exactly the definition of a transition of a mixture of couplings:
    \begin{equation}
      \sum_{k = 1}^{\infty}\sum_{l = 1}^{\infty} \frac{\alpha_k \beta_l \probability_{\coupling\left( \policy_{k}, \environment_{l} \right)}\left( s, t, a \right)}{\sum_{k' = 1}^{\infty} \sum_{l' = 1}^{\infty} \alpha_{k'} \beta_{l'} \probability_{\coupling\left( \policy_{k'}, \environment_{l'} \right)}\left( s, t, a \right)}
      \transition_{\coupling\left( \policy_{k}, \environment_{l} \right)}\left( \star, \left( s, t, a \right) \right).
    \end{equation}
  \end{proof}
\end{lemma}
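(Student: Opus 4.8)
The plan is to prove this by coinduction on transducers, using the coinductive principle for equality discussed in Section~\ref{sec.coinductionForDummies}: to show that two transducers coincide it suffices to check that the probability distributions in their first components are equal, and that their transition functions agree on every input-output pair, where in verifying the latter we may assume the statement of the lemma itself for the `smaller' transducers produced by the transition functions of the mixands $\policy_k$ and $\environment_l$.

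First I would handle the first component. By the definition of $\coupling$, the distribution $\probability_{\coupling\left(\sum_k\alpha_k\policy_k,\sum_l\beta_l\environment_l\right)}(s,t,a)$ equals $\probability_{\sum_k\alpha_k\policy_k}(a)\,\probability_{\sum_l\beta_l\environment_l}(s,t)$; expanding each factor with Definition~\ref{def.transMixture} and distributing the product over the two sums rewrites this as $\sum_k\sum_l\alpha_k\beta_l\,\probability_{\policy_k}(a)\,\probability_{\environment_l}(s,t)$, which is exactly $\sum_k\sum_l\alpha_k\beta_l\,\probability_{\coupling(\policy_k,\environment_l)}(s,t,a)$, i.e.\ the first component of the right-hand mixture. (One silently identifies the doubly-indexed family with a singly-indexed one via a bijection $\N\to\N\times\N$; since the coefficients are nonnegative and sum to one, this reindexing does not change the mixture, so I would not dwell on it.)

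Next I would handle the transition. For the dummy input $\star$ and an output $(s,t,a)$ in the support, the definition of $\coupling$ gives $\transition_{\coupling(\sum_k\alpha_k\policy_k,\sum_l\beta_l\environment_l)}(\star,(s,t,a)) = \coupling\left(\transition_{\sum_k\alpha_k\policy_k}(s,a),\transition_{\sum_l\beta_l\environment_l}(a,(s,t))\right)$. I would then unfold both of these transitions with Equation~\ref{eq.mixtureUpdate}, obtaining a coupling of two mixtures, and invoke the coinductive hypothesis to pull both mixtures out through $\coupling$. This leaves a double sum whose $(k,l)$-th summand is $\coupling(\transition_{\policy_k}(s,a),\transition_{\environment_l}(a,(s,t)))$ --- which by the definition of $\coupling$ is $\transition_{\coupling(\policy_k,\environment_l)}(\star,(s,t,a))$ --- weighted by the product of the two Bayesian-update coefficients $\frac{\alpha_k\probability_{\policy_k}(a)}{\sum_{k'}\alpha_{k'}\probability_{\policy_{k'}}(a)}$ and $\frac{\beta_l\probability_{\environment_l}(s,t)}{\sum_{l'}\beta_{l'}\probability_{\environment_{l'}}(s,t)}$. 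Simplifying that product --- using that $\probability_{\coupling(\policy_k,\environment_l)}(s,t,a)$ is literally $\probability_{\policy_k}(a)\probability_{\environment_l}(s,t)$ --- yields the coefficient $\frac{\alpha_k\beta_l\,\probability_{\coupling(\policy_k,\environment_l)}(s,t,a)}{\sum_{k',l'}\alpha_{k'}\beta_{l'}\,\probability_{\coupling(\policy_{k'},\environment_{l'})}(s,t,a)}$ prescribed by Definition~\ref{def.transMixture} for the mixture $\sum_k\sum_l\alpha_k\beta_l\,\coupling(\policy_k,\environment_l)$, which closes the coinductive step.

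The one place where a little care is needed is precisely this last arithmetic manipulation: one must verify that the product of the two separate normalising denominators equals the joint denominator $\sum_{k',l'}\alpha_{k'}\beta_{l'}\,\probability_{\coupling(\policy_{k'},\environment_{l'})}(s,t,a)$, and this is exactly the step that exploits the product structure of the coupled output distribution. As elsewhere in the paper, I would gloss over the cases in which a transition is undefined, since these occur only for outputs of probability zero. Everything else is mechanical unfolding of the definitions of $\coupling$ and of mixtures, so I expect the argument to be brief.
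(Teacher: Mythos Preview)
Your proposal is correct and follows essentially the same route as the paper's proof: both verify equality coinductively by checking the output distribution (expanding the product of mixtures into a double sum) and the transition function (unfolding the mixture transitions via Equation~\ref{eq.mixtureUpdate}, invoking the coinductive hypothesis, and then recognising that the product of the two Bayesian-update coefficients is exactly the coefficient prescribed for the coupled mixture). Your explicit remark about the reindexing $\N\to\N\times\N$ and about the product of the two normalisers equalling the joint normaliser makes explicit what the paper leaves implicit under ``simple operations on sums''.
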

\subsubsection{Coupled evolutions as filtering}
\label{sec.evolutionIsFiltering}
The term `Bayesian filtering' is used to describe a technique of performing Bayesian inference sequentially, to track a variable which changes over time. In terms of random variables: upon receiving data $D_n$ at time step $n$, with a prior $P(X_n \mid D_{<n})$, filtering produces a posterior $P(X_{n+1} \mid D_{<n+1})$. Note that the posterior is over a different variable $X_{n+1}$ than the prior, which is over $X_n$. In this section we will explain why the evolution operator on coupled transducers can be understood in terms of filtering.

Since a coupling has trivial input, we can treat its unrolling as a probability space over output sequences, allowing us to refer to random variables within that space. We will write $P_{\coupling\left( \policy, \environment\right)}(S_0, \cdots S_{n}, T_0, \cdots, T_n, A_0, \cdots, A_n)$ for the probability space induced by the distribution $\unrolling_n\left(\coupling\left( \policy, \environment\right)\right)$. Then the evolution on couplings can be understood in terms of their unrollings as follows. For any $n, m \in \mathbb{N}$, any sequences of state data $\elementString{s} \in \stateSpace^n, \elementString{s'} \in \stateSpace^m$, any sequences of telos data $\elementString{t} \in \telosSpace^n, \elementString{t'} \in \telosSpace^m$, any sequences of action data $\elementString{a} \in \actionSpace^n, \elementString{a'} \in \actionSpace^m$ we have:
\begin{multline}
  P_{\coupling\left(\policy \evolution \interleaved{\elementString{s}}{\elementString{a}}, \environment \evolution \interleaved{\elementString{a}}{\interleaved{\elementString{s}}{\elementString{t}}} \right)}(S_{1 \cdots  m} = \elementString{s'},T_{1 \cdots  m} = \elementString{t'},A_{1 \cdots  m} = \elementString{a'}) =\\
  P_{\coupling\left(\policy, \environment\right)}(S_{n+1 \cdots n+m} = \elementString{s'},T_{n+1 \cdots n+m} = \elementString{t'},A_{n+1 \cdots n+m} = \elementString{a'} \\ \mid S_{1 \cdots n} = \mathbf{s},T_{1 \cdots n} = \mathbf{t},A_{1 \cdots n} = \mathbf{a})
\end{multline}
In other words, the first $m$ steps of the unrolling of the coupling of
evolutions
\begin{equation}
  \coupling\left(\policy \evolution \interleaved{\elementString{s}}{\elementString{a}}, \environment \evolution \interleaved{\elementString{a}}{\interleaved{\elementString{s}}{\elementString{t}}} \right)
\end{equation}
behave like the steps $n+1 \cdots n+m$ of the unrolling of the coupling $\coupling\left(\policy, \environment\right)$, conditioned on the first $n$ outputs being equal to the evolution trajectory. In this sense, the evolution operator can be seen as an analogue of Bayesian filtering.

Recall that Remark \ref{rem.mooreUpdateIsFiltering} pointed out the similarity between Bayesian filtering and the operation on Moore machines which corresponded to transition. Since evolution is just repeated transition we now see that the comparison was not coincidental -- in the case of coupled systems this operation maps into filtering for unrollings.

\subsection{Success}

We now proceed to defining the probability of success. Note that we only care about encountering success at least once, multiple occurences are irrelevant.

We first define some helper objects and functions which will be crucial in proofs.
\newcommand{\coupledTransducer}{\chi}
\newcommand{\boundedSumSequences}[1]{\mathrm{BSS}\left( #1 \right)}
\begin{definition}[Bounded sum sequences]
  \label{def.boundedSumSequences}
  We define the family \emph{bounded sum sequences} $\boundedSumSequences{r_{0}}$ over the interval $\left[ 0, 1 \right]$ coinductively using the single constructor taking $r \in \left[ 0, r_0 \right]$ and $t \colon \boundedSumSequences{r_{0} - r}$ written as $\left( r, t \right)$.
\end{definition}
The idea is that $\boundedSumSequences{r_{0}}$ represents the set of sequences of numbers in $[0,1]$ that sum to at most $r_0$. Consequently when the first element is removed the remainder of the sequence sums to at most $r_0-r$, i.e.\ it is an element of $\boundedSumSequences{r_{0} - r}$.

There are natural multiplication and summing operations on bounded sum sequences.
\begin{definition}
  \label{def.infraBoundedSumSequences}
  Given numbers $r_{0}, s \in \left[ 0, 1 \right]$ we define multiplication
  \begin{equation}
      s\cdot \colon \boundedSumSequences{r_{0}} \to \boundedSumSequences{sr_{0}}
  \end{equation} corecursively as
  \begin{equation}
    s\left( r, t \right) = \left( s r,  s t \right).
  \end{equation}

  Given numbers $s, s_{k} \in \left[ 0, 1 \right]$ such that $\sum_{i = 0}^{\infty} s_k = s$ and a sequence of bounded sum sequences $\left( r_{k}, t_{k} \right) \colon \boundedSumSequences{s_{k}}$ we define summing corecursively as
  \begin{equation}
    \sum_{k = 0}^{\infty} \left( r_{k}, t_{k} \right) = \left( \sum_{k = 0}^{\infty} r_{k}, \sum_{k = 0}^{\infty} t_{k} \right),
  \end{equation}
  which belongs to $\boundedSumSequences{s}$.
\end{definition}
We can use bounded sum sequences to represent the probability that a specific coupled system outputs its first success exactly at a given step.
\newcommand{\successSequence}{\sigma}
\begin{definition}[Success sequence]
  \label{def.successSequence}
  We define the \emph{success sequence} $\successSequence \colon \coupledTransducers \to \boundedSumSequences{1}$ corecursively as
  \begin{multline}
    \successSequence\left( \coupledTransducer \right) = \left(
      \sum_{s \in \stateSpace}\sum_{a \in \actionSpace} \probability_{\coupledTransducer}\left( s, \success, a \right),\right.\\
    \left. \left( \sum_{s \in \stateSpace}\sum_{a \in \actionSpace} \probability_{\coupledTransducer}\left( s, \noSuccess, a \right) \right) \right. \hspace{1.8in} \\ \left. \successSequence \left(
        \sum_{s \in \stateSpace}\sum_{a \in \actionSpace} \frac{\probability_{\coupledTransducer}\left( s, \noSuccess, a \right)}{\sum_{s' \in \stateSpace}\sum_{a' \in \actionSpace} \probability_{\coupledTransducer}\left( s', \noSuccess, a' \right)}\transition{\coupledTransducer}\left( \star, \left( s, \noSuccess, a \right) \right)
      \right)
    \right).
  \end{multline}
\end{definition}
One should think of this as a sequence of probabilities of the form
\begin{equation}
  p(\text{success occurs for the first time on time step $n$}).
\end{equation}

Definition \ref{def.infraBoundedSumSequences} lets us extend the notion of a mixture to bounded sum sequences, suggesting the following lemma.
\begin{lemma}[Success sequences respect mixtures]
  \label{lem.successSequenceRespectsMixtures}
  Let $\coupledTransducer_{k} \colon \coupledTransducers$ be coupled transducers and $\alpha_{k} \geq 0$, such that $\sum_{k = 0}^{\infty} \alpha_{k} = 1$. Success sequences respect mixtures, that is
  \newcommand{\kthSuccessSequence}[1]{\successSequence\left( \coupledTransducer_{#1} \right)}
  \begin{equation}
    \successSequence\left( \mixture{\alpha_}{\coupledTransducer_} \right) = \mixture{\alpha_}{\kthSuccessSequence}.
  \end{equation}

  \begin{proof}
    The first projection of the left hand side (i.e.\ the first element of the pair returned by $\sigma$) is
    \begin{equation}
      \sum_{s \in \stateSpace}\sum_{a \in \actionSpace} \probability_{\mixture{\alpha_}{\coupledTransducer_}}\left( s, \success, a \right) =
      \sum_{s \in \stateSpace}\sum_{a \in \actionSpace} \mixture{\alpha_}{\kthProbabilityAt{\coupledTransducer}{s, \success, a}},
    \end{equation}
    which after simple operations on sums becomes
    \newcommand{\kthTotalSuccessProbability}[1]{\sum_{s \in \stateSpace}\sum_{a \in \actionSpace} \probability_{\coupledTransducer_{#1}}\left( s, \success, a \right)}
    \begin{equation}
      \mixture{\alpha_}{\kthTotalSuccessProbability},
    \end{equation}
    which is exactly the first projection of the right hand side.

    The second projection is the slightly more complex
    \begin{multline}
      \left( \sum_{s \in \stateSpace}\sum_{a \in \actionSpace} \sum_{k = 0}^{\infty} \alpha_{k} \probability_{\coupledTransducer_{k}}\left( s, \noSuccess, a \right) \right) \\
      \successSequence \left(
        \sum_{s \in \stateSpace}\sum_{a \in \actionSpace} \frac{\sum_{k = 0}^{\infty} \alpha_{k} \probability_{\coupledTransducer_{k}}\left( s, \noSuccess, a \right)}{\sum_{s' \in \stateSpace}\sum_{a' \in \actionSpace} \sum_{k = 0}^{\infty} \alpha_{k} \probability_{\coupledTransducer_{k}}\left( s', \noSuccess, a' \right)} \right.\\
      \left. \sum_{k = 0}^{\infty} \frac{\alpha_k \probability_{\coupledTransducer_{k}}\left( s, \noSuccess, a \right)}{\sum_{k' = 0}^{\infty} \alpha_k' \probability_{\coupledTransducer_{k'}}\left( s, \noSuccess, a \right)} \transition{\coupledTransducer_{k}}\left( \star, \left( s, \noSuccess, a \right) \right)
      \right).
    \end{multline}
    Canceling out the numerator of the first fraction with the denominator of the second we get
    \begin{multline}
      \left( \sum_{s \in \stateSpace}\sum_{a \in \actionSpace} \sum_{k = 0}^{\infty} \alpha_{k} \probability_{\coupledTransducer_{k}}\left( s, \noSuccess, a \right) \right) \\
      \successSequence \left(
        \sum_{s \in \stateSpace}\sum_{a \in \actionSpace} \sum_{k = 0}^{\infty} \frac{\alpha_{k} \probability_{\coupledTransducer_{k}}\left( s, \noSuccess, a \right)}{\sum_{s' \in \stateSpace}\sum_{a' \in \actionSpace} \sum_{k' = 0}^{\infty} \alpha_{k'} \probability_{\coupledTransducer_{k'}}\left( s', \noSuccess, a' \right)}
        \transition{\coupledTransducer_{k}}\left( \star \left( s, \noSuccess, a \right) \right)
      \right).
    \end{multline}
    Expanding $1$ into a fraction with ${\sum_{s' \in \stateSpace}\sum_{a' \in \actionSpace} \probability_{\coupledTransducer_{k}}\left( s', \noSuccess, a' \right)}$ for every $k$ inside of the corecursive function and shuffling around some terms yields
    \begin{multline}
      \left( \sum_{s \in \stateSpace}\sum_{a \in \actionSpace} \sum_{k = 0}^{\infty} \alpha_{k} \probability_{\coupledTransducer_{k}}\left( s, \noSuccess, a \right) \right) \\
      \successSequence \left(
        \sum_{k = 0}^{\infty} \frac{\sum_{s' \in \stateSpace}\sum_{a' \in \actionSpace} \alpha_{k} \probability_{\coupledTransducer_{k}}\left( s', \noSuccess, a' \right)}{\sum_{s' \in \stateSpace}\sum_{a' \in \actionSpace} \sum_{k' = 0}^{\infty} \alpha_{k'} \probability_{\coupledTransducer_{k'}}\left( s', \noSuccess, a' \right)} \right.\\
      \left. \sum_{s \in \stateSpace}\sum_{a \in \actionSpace} \frac{\probability_{\coupledTransducer_{k}}\left( s, \noSuccess, a \right)}{\sum_{s' \in \stateSpace}\sum_{a' \in \actionSpace} \probability_{\coupledTransducer_{k}}\left( s', \noSuccess, a' \right)} \transition{\coupledTransducer_{k}}\left( \star, \left( s, \noSuccess, a \right) \right)
      \right),
    \end{multline}
    and finally using the coinductive hypothesis, cancelling the first term with the first denominator, and reorganizing some sums we get
    \begin{multline}
      \sum_{k = 0}^{\infty} \alpha_{k} \left( \sum_{s \in \stateSpace}\sum_{a \in \actionSpace} \probability_{\coupledTransducer_{k}}\left( s, \noSuccess, a \right) \right)\\
      \successSequence \left(
        \sum_{s \in \stateSpace}\sum_{a \in \actionSpace} \frac{\probability_{\coupledTransducer_{k}}\left( s, \noSuccess, a \right)}{\sum_{s' \in \stateSpace}\sum_{a' \in \actionSpace} \probability_{\coupledTransducer_{k}}\left( s', \noSuccess, a' \right)} \transition{\coupledTransducer_{k}}\left( \star, \left( s, \noSuccess, a \right) \right)
      \right),
    \end{multline}
    which is exactly the second term of the right hand side, finishing the proof.
  \end{proof}
\end{lemma}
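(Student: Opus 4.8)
The plan is to prove the equality by coinduction on bounded sum sequences, in the style of \cite{CoinductionCzajka15} already used above for Lemmas~\ref{lem.couplingOfMixtures} and~\ref{lem.unifilarRespectsMixtures}: two elements of $\boundedSumSequences{1}$ are equal once their first projections (heads) agree and their tails are equal, and the coinductive hypothesis lets us assume the statement of the lemma already holds of the tails. Throughout, write $\mu = \sum_{k}\alpha_{k}\coupledTransducer_{k}$ for the mixture, $P_{k} = \sum_{s,a}\probability_{\coupledTransducer_{k}}(s,\noSuccess,a)$ for the total no-success probability of the $k$-th component, and $P = \sum_{k}\alpha_{k}P_{k}$ for that of $\mu$ (note $P = \sum_{s,a}\probability_{\mu}(s,\noSuccess,a)$ by Definition~\ref{def.transMixture}).

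The heads are immediate: by Definition~\ref{def.successSequence} the head of $\successSequence(\mu)$ is $\sum_{s,a}\probability_{\mu}(s,\success,a)$, and expanding $\probability_{\mu}$ via Definition~\ref{def.transMixture} and interchanging the finite sum over $(s,a)$ with the convergent sum over $k$ yields $\sum_{k}\alpha_{k}\sum_{s,a}\probability_{\coupledTransducer_{k}}(s,\success,a)$, which is exactly the head of $\sum_{k}\alpha_{k}\successSequence(\coupledTransducer_{k})$ by Definition~\ref{def.infraBoundedSumSequences}.

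The tails carry the real content. By Definition~\ref{def.successSequence} the tail of $\successSequence(\mu)$ is $P$ times $\successSequence$ applied to the conditioned-and-evolved transducer
\[
  \mu' \;=\; \sum_{s,a}\frac{\probability_{\mu}(s,\noSuccess,a)}{P}\,\transition_{\mu}\!\left(\star,(s,\noSuccess,a)\right).
\]
Substituting the mixture update formula~\eqref{eq.mixtureUpdate}, the transition $\transition_{\mu}(\star,(s,\noSuccess,a))$ equals $\sum_{k}\frac{\alpha_{k}\probability_{\coupledTransducer_{k}}(s,\noSuccess,a)}{\sum_{l}\alpha_{l}\probability_{\coupledTransducer_{l}}(s,\noSuccess,a)}\,\transition_{\coupledTransducer_{k}}(\star,(s,\noSuccess,a))$; its Bayesian denominator is precisely $\probability_{\mu}(s,\noSuccess,a)$, which cancels the numerator in the definition of $\mu'$. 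After this cancellation, reversing the order of summation and factoring $P_{k}$ out of the inner sum over $(s,a)$ identifies $\mu' = \sum_{k}\frac{\alpha_{k}P_{k}}{P}\,\coupledTransducer_{k}'$, where $\coupledTransducer_{k}' = \sum_{s,a}\frac{\probability_{\coupledTransducer_{k}}(s,\noSuccess,a)}{P_{k}}\,\transition_{\coupledTransducer_{k}}(\star,(s,\noSuccess,a))$ is exactly the transducer occurring inside the tail of $\successSequence(\coupledTransducer_{k})$.

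Now the coinductive hypothesis applied to the mixture $\sum_{k}\frac{\alpha_{k}P_{k}}{P}\coupledTransducer_{k}'$ gives $\successSequence(\mu') = \sum_{k}\frac{\alpha_{k}P_{k}}{P}\successSequence(\coupledTransducer_{k}')$; multiplying through by $P$ and using that scalar multiplication of bounded sum sequences distributes over mixtures (Definition~\ref{def.infraBoundedSumSequences}), the tail of $\successSequence(\mu)$ becomes $\sum_{k}\alpha_{k}P_{k}\,\successSequence(\coupledTransducer_{k}')$, and since $P_{k}\,\successSequence(\coupledTransducer_{k}')$ is by definition the tail of $\successSequence(\coupledTransducer_{k})$, this is the tail of $\sum_{k}\alpha_{k}\successSequence(\coupledTransducer_{k})$, closing the coinduction. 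Two caveats are worth stating explicitly: the degenerate case $P = 0$ (every component achieves success with probability one on the first step), where the divisions above are undefined but both sides are visibly the constant-zero sequence, should be handled as the paper handles its other partial definitions; and the same cancellation must be carried out inside the sums over $(s,a)$ for each component separately, mirroring the bookkeeping in the proof of Lemma~\ref{lem.couplingOfMixtures}. I expect the cancellation of the Bayesian denominator — the observation that the nested mixture defining $\mu'$ collapses to a single mixture over $k$ of the $\coupledTransducer_{k}'$ — to be the only step that is not purely routine manipulation of finite and absolutely convergent infinite sums.
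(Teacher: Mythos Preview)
Your proposal is correct and follows essentially the same coinductive argument as the paper's own proof: match heads by expanding the mixture probability and swapping sums, then match tails by cancelling the Bayesian denominator from the mixture update against the outer weight, factoring out each component's total no-success mass, and invoking the coinductive hypothesis on the resulting mixture of evolved components. Your abbreviations $P_{k}$, $P$, and $\coupledTransducer_{k}'$ make the same cancellations more transparent than the paper's fully expanded displays, and your explicit mention of the degenerate case $P=0$ is a point the paper leaves implicit.
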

\newcommand{\bssSum}{\Sigma}
This lets us define the probability of success. We start by defining the sum of a BSS:
\begin{definition}[Sum of a BSS]
  Given a BSS, we define its \emph{$n$-step sum}
  \begin{equation}
      \bssSum_{n} \colon \boundedSumSequences{r_{0}} \to \left[ 0, r_{0} \right]
  \end{equation}
  inductively with the base
  \begin{equation}
    \bssSum_{0}\left( \left( r, t \right) \right) = r
  \end{equation}
  and the inductive step
  \begin{equation}
    \bssSum_{n+1}\left( \left( r, t \right) \right) = r + \bssSum_{n}\left( t \right).
  \end{equation}
  A simple inductive argument shows that this is well defined.

  The sequence $\left( \bssSum_{n}\left( \left( r, t \right) \right) \right)_{n \in \N}$ is obviously bounded and another simple inductive argument shows that it is non-decreasing, letting us define the \emph{sum}
  \begin{equation}
    \bssSum\left( \left( r, t \right) \right) = \lim_{n \to \infty} \bssSum_{n}\left( \left( r, t \right) \right).
  \end{equation}
\end{definition}
This, again, respects mixtures.
\begin{lemma}[Sum of BSSes respect mixtures]
  \label{lem.sumOfMixtures}
  Let $t_k \colon \boundedSumSequences{r_{0, k}}$ be BSSes and $\alpha_{k}
  \geq 0$, such that $\sum_{k = 0}^{\infty} \alpha_{k} = 1$. The sum of
  BSSes respects mixtures, that is
  \newcommand{\kthBssSum}[1]{\bssSum\left( t_{#1}\right)}
  \begin{equation}
    \bssSum\left( \mixture{\alpha_}{t_} \right) =
    \mixture{\alpha_}{\kthBssSum}
  \end{equation}

  \begin{proof}
    It suffices to note that converging limits respect mixtures and perform a simple inductive proof
    \newcommand{\kthProbabilitySuccessSequence}[2]{\bssSum_{#1}\left( \left( r_{#2}, t_{#2} \right) \right)}
    \newcommand{\kthBss}[1]{\left( r_{#1}, t_{#1}\right)}
    \begin{equation}
      \bssSum_{0}\left( \mixture{\alpha_}{\kthBss} \right) = \mixture{\alpha_}{r_} = \mixture{\alpha_}{\kthProbabilitySuccessSequence{0}}
    \end{equation}
    with the step
    \newcommand{\kthProbabilityPlainSuccessSequence}[2]{\bssSum_{#1}\left( t_{#2} \right)}
    \begin{multline}
      \bssSum_{n+1}\left( \mixture{\alpha_}{\kthBss} \right) =
      \mixture{\alpha_}{r_} + \bssSum_{n}\left( \mixture{\alpha_}{t_} \right) =\\
      \mixture{\alpha_}{r_} + \mixture{\alpha_}{\kthProbabilityPlainSuccessSequence{n}} =
      \mixture{\alpha_}{\kthProbabilitySuccessSequence{n+1}}.
    \end{multline}
  \end{proof}
\end{lemma}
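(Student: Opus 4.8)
The plan is to reduce the claim about the full sum $\bssSum$ to the analogous fact for the inductively defined partial sums $\bssSum_n$, and then pass to the limit. Concretely, I would first prove by induction on $n$ that each $\bssSum_n$ respects mixtures, i.e. that $\bssSum_n\left(\mixture{\alpha_}{t_}\right) = \sum_{k} \alpha_k \bssSum_n(t_k)$ for every $n \in \N$. For the base case $n = 0$, recall that $\bssSum_0$ returns the head of a bounded sum sequence, and by Definition~\ref{def.infraBoundedSumSequences} the head of $\mixture{\alpha_}{t_}$ is exactly $\sum_k \alpha_k r_{0,k}$, where $r_{0,k}$ is the head of $t_k$; this is $\sum_k \alpha_k \bssSum_0(t_k)$. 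For the inductive step, I would unfold $\bssSum_{n+1}((r,t)) = r + \bssSum_n(t)$ together with the definition of the mixture, whose head is $\sum_k \alpha_k r_k$ and whose tail is the mixture $\sum_k \alpha_k t_k'$ of the tails $t_k'$; applying the inductive hypothesis to that tail gives $\sum_k \alpha_k r_k + \sum_k \alpha_k \bssSum_n(t_k')$, and regrouping these two convergent series of nonnegative reals into a single one yields $\sum_k \alpha_k(r_k + \bssSum_n(t_k')) = \sum_k \alpha_k \bssSum_{n+1}(t_k)$.

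Having the identity for every $n$, I would then take $n \to \infty$. Since $\bssSum(x) = \lim_{n} \bssSum_n(x)$ by definition, and since for each $k$ the sequence $n \mapsto \bssSum_n(t_k)$ is non-decreasing and bounded above by $r_{0,k} \le 1$ (with $\sum_k \alpha_k r_{0,k} \le 1$), the only point requiring care is the interchange $\lim_n \sum_k \alpha_k \bssSum_n(t_k) = \sum_k \alpha_k \lim_n \bssSum_n(t_k)$. This is precisely the fact that converging limits respect mixtures, and it follows from monotone convergence for series of nonnegative terms (equivalently Tonelli's theorem for counting measures, or a direct $\varepsilon$-argument controlling the tail $\sum_{k > K}$ uniformly in $n$). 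Combining the established identity for $\bssSum_n$ with this interchange gives $\bssSum\left(\mixture{\alpha_}{t_}\right) = \sum_k \alpha_k \bssSum(t_k)$, which is the claim.

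I do not expect a genuine obstacle: everything in sight is a sum of nonnegative reals bounded by $1$, so all rearrangements and the limit--sum interchange are legitimate without delicacy. The closest thing to a pitfall is bookkeeping with the radii --- one must keep track that $t_k \colon \boundedSumSequences{r_{0,k}}$, that $\alpha_k t_k \colon \boundedSumSequences{\alpha_k r_{0,k}}$ after scaling, and that the mixture lands in $\boundedSumSequences{\sum_k \alpha_k r_{0,k}}$, so that every invocation of the summing operation from Definition~\ref{def.infraBoundedSumSequences} has its hypotheses met. It is worth noting that, in contrast to the preceding lemmas about couplings and success sequences, this proof is ordinary induction plus a limit rather than coinduction, because $\bssSum$ is itself defined as the limit of an inductively defined family $\bssSum_n$.
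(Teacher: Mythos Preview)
Your proposal is correct and follows essentially the same approach as the paper: prove by induction on $n$ that each partial sum $\bssSum_n$ respects mixtures, then pass to the limit using that converging limits respect mixtures. You are somewhat more explicit than the paper about justifying the limit--sum interchange (via monotone convergence) and about the radius bookkeeping, but the underlying argument is identical.
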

With this, we can define the success probability:
\begin{definition}
  \label{def.success}
  We define the \emph{success probability} of a policy-environment pair as the composition
  \begin{equation}
    \successProbability\left( \policy, \environment \right) = \bssSum\left( \successSequence\left( \coupling\left( \policy, \environment \right) \right) \right)
  \end{equation}
  and analogously for $n$-step success probabilities.
\end{definition}
Because of the form of the inductive definition success probability satisfies the following equation:
\begin{equation}
  \label{eq.success}
  \successProbability\left( \policy, \environment \right) =
  \sum_{s \in \stateSpace} \sum_{a \in \actionSpace}
  \probability_{\coupling\left( \policy, \environment \right)}\left( s, \success, a \right) +
  \probability_{\coupling\left( \policy, \environment \right)}\left( s, \noSuccess, a \right) \successProbability\left( \transition_{\coupling\left( \policy, \environment \right)}\left( \star, \left( s, \noSuccess, a \right) \right) \right).
\end{equation}
Intuitively the first term in the definition of probability of success represents the probability of succeeding in a single step, while the second term corresponds to succeeding at any point in the future, assuming no success has been achieved in this step.

One could be tempted to use equation~\eqref{eq.success} as a `corecursive' definition of success probability, but that would not work, because the codomain $\left[ 0, 1 \right]$ is not coinductively defined by $\successProbability$. Indeed, this equation is also satisfied by the constant function $1$.

To make the meaning of equation~\eqref{eq.success} clearer consider the following equations for the $n$-step probability of success of a specific policy-environment pair.
\begin{align*}
  \successProbability_0\left( \policy, \environment \right) &= \sum_{s \in \stateSpace} \sum_{a \in \actionSpace} \probability_{\coupling\left( \policy, \environment \right)}\left( s, \success, a \right),\\
  \successProbability_{n+1}\left( \policy, \environment \right) &= \sum_{s \in \stateSpace} \sum_{a \in \actionSpace}
                                                                  \probability_{\coupling\left( \policy, \environment \right)}\left( s, \success, a \right) +
                                                                  \probability_{\coupling\left( \policy, \environment \right)}\left( s, \noSuccess, a \right) \successProbability_{n}\left( \transition_{\coupling\left( \policy, \environment \right)}\left( \star, \left( s, \noSuccess, a \right) \right) \right).
\end{align*}
\newcommand{\failureProbability}{F}
\newcommand{\stepFailedEvolutionDefinition}[1]{\environment \evolution \interleaved{ \elementString{a}_{\leq #1}}{ \interleaved{ \elementString{s}_{\leq #1}}{ \noSuccess^{#1} } }}
\newcommand{\stepFailedEvolution}[1]{as(#1)}
For a given evolution $\interleaved{ \elementString{s}}{ \elementString{a} } \in \left(\stateSpace \times \actionSpace\right)^{n}$ we can define the failure probability along this evolution
\begin{equation}
  \failureProbability_{\coupling\left( \policy, \environment \right)}\left( \elementString{s}, \elementString{a} \right) = \begin{cases}
    \prod_{i = 0}^n \probability_{\stepFailedEvolution{i}}\left( \elementString{s}_{i + 1}, \noSuccess \right) & \textrm{ if } \policy \evolution \interleaved{ \elementString{s}}{ \elementString{a} } \textrm{ is valid,}\\
    0 & \textrm{ otherwise.}
  \end{cases}
\end{equation}
The subscript $\stepFailedEvolution{i}$ is defined as $\stepFailedEvolutionDefinition{i}$ and represents the evolution of the environment following $\interleaved{ \elementString{s}}{ \elementString{a} }$ for $i$ steps and assuming no success at every step.

Note that this function is zero if the environment evolution is not valid, because one of the elements of the product will be zero.

Now, by expanding equation \eqref{eq.success} $n$ times, we can write down an alternative formulation of the success probability for each $n$
\begin{equation}
  \label{eq.successBellman}
  \successProbability\left( \policy, \environment \right) = \successProbability_{n}\left( \policy, \environment \right) + \sum_{\elementString{s} \in \stateSpace^{n}} \sum_{\elementString{a} \in \actionSpace^{n}} \failureProbability_{\coupling\left( \policy, \environment \right)}\left( \elementString{s}, \elementString{a} \right) \successProbability\left( \policy \evolution \interleaved{ \elementString{s}}{ \elementString{a} }, \environment \evolution \interleaved{ \elementString{a}}{ \interleaved{ \elementString{s}}{ \noSuccess^{n} } } \right),
\end{equation}
which is a sum of the probability of success within the first $n$ steps, plus the expected value of the probability of success after a valid $n$-step evolution which did not achieve success.

Probability of success also has the very useful property of respecting mixtures both in policies as well as environments.
\newcommand{\politicalSuccessProbability}[1]{\successProbability\left( \policy_{#1}, \environment \right)}
\newcommand{\kthSuccessSequence}[1]{\left( r_{#1}, t_{#1} \right)}
\begin{lemma}[Success probabilities respect mixtures]
  \label{lem.successOfMixtures}
  Let $\policy_{k} \colon \policies$ be policies, $\environment_{k} \colon \environments$ be environments, and $\alpha_{k}, \beta_{k} \geq 0$ such that $\sum_{k = 0}^{\infty}\alpha_{k} = 1$ and $\sum_{k = 0}^{\infty} \beta_{k} = 1$. The success probability function respects mixtures, that is
  \begin{equation}
    \successProbability\left( \mixture{\alpha_}{\policy_}, \mixture{\beta_}{\environment_} \right) =
    \sum_{k = 1}^{\infty} \sum_{l = 1}^{\infty} \alpha_{k}\beta_{l} \successProbability\left( \transducer_{k}, \environment_{l} \right)
  \end{equation}

  \begin{proof}
    By Lemma \ref{lem.couplingOfMixtures} coupling respects mixtures, by
    Lemma \ref{lem.successSequenceRespectsMixtures} success sequences
    respect mixtures, and by Lemma \ref{lem.sumOfMixtures} sums of BSSes
    respect mixtures, thus so does their composition.
  \end{proof}
\end{lemma}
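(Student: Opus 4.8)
The plan is to read the proof straight off Definition~\ref{def.success}, which exhibits the success probability as the composition $\successProbability = \bssSum \circ \successSequence \circ \coupling$, and then to observe that each of these three operations has already been shown to carry mixtures to mixtures, so their composite does too. In other words, I would not touch the coinductive machinery at all — all of it has been discharged in the three preparatory lemmas — and instead just chase the mixing coefficients through the composition.

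Concretely: first I would apply Lemma~\ref{lem.couplingOfMixtures} to the pair $\left(\mixture{\alpha_}{\policy_},\ \mixture{\beta_}{\environment_}\right)$, rewriting $\coupling\left(\mixture{\alpha_}{\policy_},\mixture{\beta_}{\environment_}\right) = \sum_{k}\sum_{l}\alpha_{k}\beta_{l}\,\coupling\left(\policy_{k},\environment_{l}\right)$. Since the weights $\alpha_{k}\beta_{l}$ are nonnegative with $\sum_{k}\sum_{l}\alpha_{k}\beta_{l}=1$ and the index set $\N\times\N$ is countable, this is a genuine mixture of coupled transducers in the sense of Definition~\ref{def.transMixture}. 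Next, Lemma~\ref{lem.successSequenceRespectsMixtures} pulls $\successSequence$ inside this mixture, the result now being read as a mixture of bounded sum sequences via Definition~\ref{def.infraBoundedSumSequences}; and finally Lemma~\ref{lem.sumOfMixtures} pulls $\bssSum$ through, leaving $\sum_{k}\sum_{l}\alpha_{k}\beta_{l}\,\bssSum\left(\successSequence\left(\coupling\left(\policy_{k},\environment_{l}\right)\right)\right)$, which by Definition~\ref{def.success} is exactly $\sum_{k}\sum_{l}\alpha_{k}\beta_{l}\,\successProbability\left(\policy_{k},\environment_{l}\right)$, the claimed identity.

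The only point needing a moment's care, and the main (quite mild) obstacle, is that Lemmas~\ref{lem.successSequenceRespectsMixtures} and \ref{lem.sumOfMixtures} are each phrased for a mixture over a single countable index, whereas the mixture arising here is indexed by pairs $(k,l)$. Since both lemmas use only that the index set is countable and that the coefficients form a probability vector, one can either fix a bijection $\N\cong\N\times\N$ and apply them verbatim, or apply the single-index versions twice (summing over $l$ for each fixed $k$, then over $k$), using that a mixture of mixtures is again a mixture. Either way the argument reduces to the bookkeeping of nested sums, with no further coinduction required.
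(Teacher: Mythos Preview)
Your proposal is correct and is essentially identical to the paper's own proof, which simply invokes Lemmas~\ref{lem.couplingOfMixtures}, \ref{lem.successSequenceRespectsMixtures}, and \ref{lem.sumOfMixtures} in turn and concludes that the composition $\successProbability = \bssSum \circ \successSequence \circ \coupling$ respects mixtures. Your remark about reindexing the double sum over $(k,l)$ by a single countable index is a reasonable bit of care that the paper leaves implicit.
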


With success defined and explored we can now define what it means for a policy to be optimal for an environment.
\begin{definition}[Optimality]
  \label{def.teleoOptimality}
  Let $\constrainedTransducers \subseteq \policies$ be a constrained set of policies. We will say that a policy $\policy \colon \constrainedTransducers$ is \emph{$\constrainedTransducers$-optimal} for a teleo-environment $\environment \colon \environments$ iff
  \begin{equation}
    \forall_{\policy' \colon \constrainedTransducers} \successProbability\left( \policy', \environment \right) \leq \successProbability\left( \policy, \environment \right).
  \end{equation}
  In other words $\policy$ maximizes the success probability in $\environment$
  among the policies in $\constrainedTransducers$. We will sometimes
  omit the `$\,\constrainedTransducers$-', when it is obvious from context.
\end{definition}

\section{Filtering}
\label{sec.filtering}

We remarked in Section \ref{sec.evolutionIsFiltering} that the evolution operator can be seen in terms of Bayesian filtering. We will now explain this intuition in more detail, and derive some formal results.

An optimal policy $\policy$ for $\environment$ represents the behavioural propensities of an ideally-performing agent with beliefs and values encoded by $\environment$. In this sense, it is consistent with $\policy$ to attribute a `mental state' represented by $\environment$ (under the assumption that $\policy$ behaves `rationally'). We can say, as it were, that $\environment$ is a `permissible' mental state attribution for $\policy$, at least for practical purposes.

Iterated application of the evolution operator for a policy $\policy$ produces a sequence of transducers $\policy_1, \policy_2, \cdots$, and so on, where $\policy_{n+1} = \policy_n \evolution (s_n, a_n)$ for some $(s_n, a_n)$. This section will establish that, if it is permissible to attribute a `mental state' $\environment_1$ to $\policy_1$, then it is permissible to attribute some $\environment_n$ to each $\policy_n$ so that the sequence $\environment_1, \environment_2, \cdots$ is what would be obtained by a process resembling Bayesian filtering, beginning with a prior corresponding to $\environment_1$, and proceeding at each step by updating on an observation corresponding to $(a_n, (s_n, \bot))$.

It is worth pointing out that this differs from standard Bayesian filtering in that it is `value-laden': according to the sequence $\environment_1, \environment_2, \cdots$, the transition $\policy_n \to \policy_{n+1}$ `behaves as though' an agent were updating on a notional observation $\bot$ over the environment's `telos' channel, in addition to observations of $\policy_n$'s actual input $s_n$ and output $a_n$. Recall that $\policy_n$ does not take a telos signal as an input, although $\environment_n$ produces it as an output. We will see in Section \ref{sec.pureFiltering} that if there exists any $\environment$ for which $\policy_1$ is optimal, it is also admissible to attribute a sequence $\environment_1, \environment_2, \cdots$ according to which $\policy_n \to \policy_{n+1}$ `behaves as though' the agent were updating only on the actual values $s_n$ and $a_n$ (and not an imaginary observation~$\bot$).

The reason for the value-laden nature of this filtering can be explained as follows. The way to achieve the best chances of future success in a new environment may depend on whether or not the current environment emitted the success signal. But the agent's goal is to achieve success at least once, with success signals after that counting for nothing. If success has been attained in the current time step, there is consequently no advantage to pursuing an optimal policy in the next step -- if the agent doesn't know whether its goal was attained, it shouldn't be distracted by the possibility that it was. Hence, the agent should always behave as though it has not already achieved success, in order to maximise its overall chances of eventual success.
We will discuss this more in Section \ref{sec.noGenericSensorimotor}.

Our formal treatment considers (value-laden) filtering for the more general case of constrained policies (i.e.\ ones that are $T$-optimal for some set $T$) rather than only policies that are globally optimal. We will begin by defining something we call the `value-laden Bellman property'.

\begin{definition}[Value-laden Bellman property for teleo-optimality]
  \label{def.filtering}
  \label{def.valueLadenBellman}
  Let $\constrainedTransducers \subseteq \policies$ be a set of constrained
  policies. We say that $\constrainedTransducers$ has the \emph{value-laden Bellman property
    for teleo-optimality} (or simply the \emph{Bellman property} when there is no ambiguity) if for any policy $\policy \colon \constrainedTransducers$
  that is $\constrainedTransducers$-optimal for an environment $\environment
  \colon \environments$ and any trajectory $\left( \elementString{a},
    \elementString{s} \right) \in \actionSpace^{n} \times \stateSpace^{n}$ of any
  length $n$, if $\environment' = \environment \evolution \interleaved{
    \elementString{a}}{ \interleaved{ \elementString{s}}{ \noSuccess^{n} } }$
  and $\policy' = \policy \evolution \interleaved{ \elementString{s}}{
    \elementString{a} }$ are valid  then $\policy'$ is $\constrainedTransducers'$-optimal for $\environment'$, where $\constrainedTransducers' = \constrainedTransducers
  \evolution \interleaved{ \elementString{s}}{\elementString{a} }$.
\end{definition}

This property says that if an agent is optimal within its class for one problem (in the sense of maximising success probability), then after it has interacted with the environment for some time it should still be optimal in its new, resulting environment.
This is essentially Bellman's principle of optimality \cite[Chapter III, \S 3]{Bellman57}.
However, a difference in our case is that to obtain the environment for the next time step we condition not only on the agent's action and received sensor value, but also on success not occurring in the current time step.
As noted above, this is because the agent's goal is to achieve success at least once.
The agent does not know whether success has been achieved or not, but its future actions will only matter in the case where success has not already been achieved.
We will say more on this point below.

We want to know which sets of constrained policies have the Bellman property. We can prove the following theorem.
\begin{theorem}[Value-laden Bellman theorem for teleo-optimality]
  \label{thm.filtering}
  Let $\constrainedTransducers \subseteq \policies$ be a set of constrained
  policies. If $T$ is closed under trajectory splicing (see Definition \ref{def.cuts}), it has the Bellman property.
\end{theorem}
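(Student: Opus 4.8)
The plan is to reduce the statement to the single-step case ($n=1$) and then bootstrap to arbitrary $n$ by induction, using the fact noted after Definition~\ref{def.cutsCounductive} that a class closed under trajectory splicing remains closed under trajectory splicing after any evolution. So the substantive work is the one-step version: if $\policy \colon \constrainedTransducers$ is $\constrainedTransducers$-optimal for $\environment$, and $\left( s, a \right) \in \stateSpace \times \actionSpace$ is such that $\policy' = \policy \evolution \left( s, a \right)$ and $\environment' = \environment \evolution \left( a, \left( s, \noSuccess \right) \right)$ are valid, then $\policy'$ is $\constrainedTransducers'$-optimal for $\environment'$, where $\constrainedTransducers' = \constrainedTransducers \evolution \left( s, a \right)$. (Note that $\policy' \in \constrainedTransducers'$ by Definition~\ref{def.constrainedEvolution}, so the conclusion is well-typed.)

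For the one-step case I would argue by contradiction, using splicing to lift a better competitor against $\policy'$ to a better competitor against $\policy$. Suppose some $\rho' \colon \constrainedTransducers'$ has $\successProbability\left( \rho', \environment' \right) > \successProbability\left( \policy', \environment' \right)$. Since $\constrainedTransducers' = \constrainedTransducers \evolution \left( s, a \right)$, write $\rho' = \rho \evolution \left( s, a \right)$ for some $\rho \colon \constrainedTransducers$ and form the spliced policy $\tilde{\policy} = \policy \splicedAlong{\left( s, a \right)} \rho'$, which lies in $\constrainedTransducers$ by the first clause of Definition~\ref{def.cutsCounductive}. By the definition of splicing, $\tilde{\policy}$ has the same output distribution as $\policy$, satisfies $\tilde{\policy} \evolution \left( s, a \right) = \rho'$, and satisfies $\tilde{\policy} \evolution \left( j, q \right) = \policy \evolution \left( j, q \right)$ for every other pair $\left( j, q \right)$. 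Expanding $\successProbability$ via equation~\eqref{eq.success} for both $\coupling\left( \tilde{\policy}, \environment \right)$ and $\coupling\left( \policy, \environment \right)$, every summand agrees except the one coming from $\left( s, \noSuccess, a \right)$, where $\transition_{\coupling\left( \policy, \environment \right)}\left( \star, \left( s, \noSuccess, a \right) \right) = \coupling\left( \policy', \environment' \right)$ while $\transition_{\coupling\left( \tilde{\policy}, \environment \right)}\left( \star, \left( s, \noSuccess, a \right) \right) = \coupling\left( \rho', \environment' \right)$; hence $\successProbability\left( \tilde{\policy}, \environment \right) - \successProbability\left( \policy, \environment \right) = \probability_{\policy}\left( a \right)\,\probability_{\environment}\left( s, \noSuccess \right)\left( \successProbability\left( \rho', \environment' \right) - \successProbability\left( \policy', \environment' \right) \right)$. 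Validity of $\policy'$ forces $a \in \supp\left( \probability_{\policy} \right)$ and validity of $\environment'$ forces $\left( s, \noSuccess \right) \in \supp\left( \probability_{\environment} \right)$, so the multiplier is strictly positive, giving $\successProbability\left( \tilde{\policy}, \environment \right) > \successProbability\left( \policy, \environment \right)$ with $\tilde{\policy} \colon \constrainedTransducers$ — contradicting the $\constrainedTransducers$-optimality of $\policy$.

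To pass from length $1$ to length $n$ I would induct on $n$ (quantifying over all splicing-closed classes, all optimal policies, and all environments). The base case $n = 0$ is vacuous, since then $\policy' = \policy$, $\environment' = \environment$ and $\constrainedTransducers' = \constrainedTransducers$. For a trajectory $\left( \elementString{a}, \elementString{s} \right)$ of length $n+1$, split off its first pair: writing $\elementString{a} = a \cdot \elementString{a}''$ and $\elementString{s} = s \cdot \elementString{s}''$, associativity of evolution gives $\policy \evolution \interleaved{\elementString{s}}{\elementString{a}} = \left( \policy \evolution \left( s, a \right) \right) \evolution \interleaved{\elementString{s}''}{\elementString{a}''}$, $\environment \evolution \interleaved{\elementString{a}}{\interleaved{\elementString{s}}{\noSuccess^{n+1}}} = \left( \environment \evolution \left( a, \left( s, \noSuccess \right) \right) \right) \evolution \interleaved{\elementString{a}''}{\interleaved{\elementString{s}''}{\noSuccess^{n}}}$, and likewise for $\constrainedTransducers$. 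The one-step case makes $\policy \evolution \left( s, a \right)$ optimal in $\constrainedTransducers \evolution \left( s, a \right)$ for $\environment \evolution \left( a, \left( s, \noSuccess \right) \right)$, that class is again closed under trajectory splicing, and the relevant $n$-step evolutions are valid (they equal $\policy'$ and $\environment'$), so the inductive hypothesis applied to the remaining length-$n$ trajectory yields that $\policy'$ is $\constrainedTransducers'$-optimal for $\environment'$.

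I expect the only real obstacle to be the bookkeeping in the one-step computation: reading off from the recursive definition of splicing that $\tilde{\policy}$ agrees with $\policy$ everywhere except along the spliced pair and that $\tilde{\policy} \evolution \left( s, a \right)$ is exactly $\rho'$, and checking that validity of the two evolved objects is precisely what keeps the multiplier $\probability_{\policy}\left( a \right)\probability_{\environment}\left( s, \noSuccess \right)$ from vanishing — without strict positivity the improvement would not propagate back to $\policy$. An essentially equivalent alternative would be to splice along the whole length-$n$ trajectory at once and compare via the $n$-step Bellman expansion~\eqref{eq.successBellman}, using that $\successProbability_{n}$ and the failure weights $\failureProbability_{\coupling\left( \policy, \environment \right)}$ depend only on the first $n$ steps of the policy; I prefer the inductive route because it sidesteps reasoning about many-step splicing directly.
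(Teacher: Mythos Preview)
The proposal is correct and follows essentially the same approach as the paper: reduce to the one-step case, argue by contradiction via splicing the better competitor onto $\policy$ and expanding $\successProbability$ through equation~\eqref{eq.success}, then induct on the trajectory length using that splicing-closed classes remain splicing-closed after evolution. The only cosmetic difference is that the paper peels off the \emph{last} pair in the inductive step (going from length $n-1$ to $n$) while you peel off the first, and the paper is slightly less explicit than you about why the multiplier $\probability_{\policy}(a)\,\probability_{\environment}(s,\noSuccess)$ is strictly positive.
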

\begin{corollary}
  \label{cor.allTransducersHasBellmanProperty}
  The set of all transducers $\policies$ has the Bellman property.
\end{corollary}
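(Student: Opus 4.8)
\emph{Proof strategy.} The plan is to prove Theorem~\ref{thm.filtering} directly from the $n$-step expansion of the success probability, equation~\eqref{eq.successBellman}, using trajectory splicing to manufacture an explicit competitor to $\policy$. If the evolved policy $\policy'$ were not $\constrainedTransducers'$-optimal for $\environment'$, we splice a hypothetically better policy back onto $\policy$ along the trajectory $\interleaved{\elementString{s}}{\elementString{a}}$; closure under trajectory splicing keeps the result inside $\constrainedTransducers$, and \eqref{eq.successBellman} shows it strictly outperforms $\policy$ for $\environment$, contradicting the optimality hypothesis. Corollary~\ref{cor.allTransducersHasBellmanProperty} is then immediate, since $\policies$ is closed under trajectory splicing (splicing is always well-defined and a splice of transducers in $\policies$ is again in $\policies$).

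In detail, fix $\policy \colon \constrainedTransducers$ that is $\constrainedTransducers$-optimal for $\environment$ and a trajectory $(\elementString{a}, \elementString{s}) \in \actionSpace^{n}\times\stateSpace^{n}$, and suppose $\policy' = \policy \evolution \interleaved{\elementString{s}}{\elementString{a}}$ and $\environment' = \environment \evolution \interleaved{\elementString{a}}{\interleaved{\elementString{s}}{\noSuccess^{n}}}$ are valid, where $\constrainedTransducers' = \constrainedTransducers \evolution \interleaved{\elementString{s}}{\elementString{a}}$; note $\policy' \colon \constrainedTransducers'$ by Definition~\ref{def.constrainedEvolution}, so $\constrainedTransducers'$-optimality of $\policy'$ is a well-formed claim. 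Suppose for contradiction that $\policy'$ is not $\constrainedTransducers'$-optimal for $\environment'$, so there is $\policy'' \colon \constrainedTransducers'$ with $\successProbability(\policy'', \environment') > \successProbability(\policy', \environment')$. By Definition~\ref{def.constrainedEvolution}, $\policy'' = \tilde\policy \evolution \interleaved{\elementString{s}}{\elementString{a}}$ for some $\tilde\policy \colon \constrainedTransducers$, so closure under trajectory splicing (Definition~\ref{def.cuts}) places the spliced policy $\hat\policy = \policy \splicedAlong{\interleaved{\elementString{s}}{\elementString{a}}} \bigl(\tilde\policy \evolution \interleaved{\elementString{s}}{\elementString{a}}\bigr) = \policy \splicedAlong{\interleaved{\elementString{s}}{\elementString{a}}} \policy''$ back in $\constrainedTransducers$. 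Unwinding the splicing definition (by induction on $n$ via its recursive clause: $\hat\policy$ behaves exactly like $\policy$ until the trajectory $\interleaved{\elementString{s}}{\elementString{a}}$ has been completed, then like $\policy''$) shows that $\hat\policy$ and $\policy$ induce identical behaviour over the first $n$ steps of the coupled process: hence $\successProbability_{n}(\hat\policy, \environment) = \successProbability_{n}(\policy, \environment)$ and $\failureProbability_{\coupling(\hat\policy,\environment)}(\elementString{s}',\elementString{a}') = \failureProbability_{\coupling(\policy,\environment)}(\elementString{s}',\elementString{a}')$ for all $(\elementString{s}',\elementString{a}') \in \stateSpace^{n}\times\actionSpace^{n}$, while $\hat\policy \evolution \interleaved{\elementString{s}'}{\elementString{a}'} = \policy \evolution \interleaved{\elementString{s}'}{\elementString{a}'}$ for every such trajectory except $\interleaved{\elementString{s}}{\elementString{a}}$, for which $\hat\policy \evolution \interleaved{\elementString{s}}{\elementString{a}} = \policy''$.

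Substituting into \eqref{eq.successBellman} at length $n$ for both $(\hat\policy, \environment)$ and $(\policy, \environment)$, every term cancels except the tail summand indexed by $(\elementString{s}, \elementString{a})$, giving
\begin{equation*}
  \successProbability(\hat\policy, \environment) - \successProbability(\policy, \environment) \;=\; \failureProbability_{\coupling(\policy,\environment)}(\elementString{s},\elementString{a}) \,\bigl(\successProbability(\policy'', \environment') - \successProbability(\policy', \environment')\bigr).
\end{equation*}
Validity of $\policy'$ and $\environment'$ makes every factor of $\failureProbability_{\coupling(\policy,\environment)}(\elementString{s},\elementString{a})$ strictly positive, so the coefficient is positive, and with $\successProbability(\policy'', \environment') > \successProbability(\policy', \environment')$ we obtain $\successProbability(\hat\policy, \environment) > \successProbability(\policy, \environment)$ with $\hat\policy \colon \constrainedTransducers$, contradicting $\constrainedTransducers$-optimality of $\policy$ for $\environment$ (Definition~\ref{def.teleoOptimality}). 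Therefore $\policy'$ is $\constrainedTransducers'$-optimal for $\environment'$, which is the Bellman property, and the corollary follows by applying the theorem to $\constrainedTransducers = \policies$.

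The step I expect to be the main obstacle is matching the splicing construction against the $n$-step expansion \eqref{eq.successBellman}: one must verify rigorously that $\hat\policy$ reproduces $\policy$'s behaviour throughout the first $n$ steps of the coupled process (so the ``first $n$ steps'' term and all the failure weights are unchanged) and differs only in the single evolved continuation $\hat\policy \evolution \interleaved{\elementString{s}}{\elementString{a}} = \policy''$. This is an induction on trajectory length using the recursive clause of the splicing definition, made slightly delicate by the coupling --- the policy component of a coupled transducer only ever evolves on its own $(s,a)$-pairs, never on the telos signal, so splicing along a \emph{policy} trajectory interacts cleanly with the coupled process, but this needs checking. By contrast, the use of the hypothesis ``closed under trajectory splicing'' is immediate (it is exactly what keeps $\hat\policy$ in $\constrainedTransducers$), and the strict positivity of $\failureProbability_{\coupling(\policy,\environment)}(\elementString{s},\elementString{a})$ follows in one line from validity. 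An alternative, arguably cleaner, organisation is to first establish the case $n=1$ from equation~\eqref{eq.success} and a one-step splice, then induct on $n$ using that $\constrainedTransducers \evolution \interleaved{\elementString{s}}{\elementString{a}}$ is again closed under trajectory splicing (the second clause of Definition~\ref{def.cutsCounductive}).
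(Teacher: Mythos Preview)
Your proposal is correct and follows essentially the same approach as the paper: splice a hypothetically better continuation back onto $\policy$, use closure under trajectory splicing to stay in $\constrainedTransducers$, and contradict optimality --- the only difference is organisational, as the paper first inducts down to the $n=1$ case and uses \eqref{eq.success}, which is precisely the ``alternative'' you mention at the end, whereas you work directly at length $n$ via \eqref{eq.successBellman}. The corollary then follows identically in both treatments, since $\policies$ is trivially closed under trajectory splicing.
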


\begin{proof}[Proof of Theorem~\ref{thm.filtering}]
First recall that for any trajectory $\interleaved{ \elementString{s} }
  { \elementString{a} } \in \left( \stateSpace \times \actionSpace \right)^{n}$ if
  $\constrainedTransducers$ is closed under trajectory splicing then so is
  $\constrainedTransducers \evolution
  \interleaved{\elementString{s}}{\elementString{a}}$, by induction on the
  length of the trajectory, as mentioned after Definition
  \ref{def.cutsCounductive}.

    Let us say that $\constrainedTransducers \subseteq \policies$ has the $n^\text{th}$ order Bellman property if definition~\ref{def.valueLadenBellman} holds for a specific value of $n$, rather than all $n$.

    We will prove theorem~\ref{thm.filtering} by showing that the $n^\text{th}$ order Bellman property holds for all $n$, which we do by induction on $n$.

    For $n = 0$ the property trivially holds.
    Let us then assume that $T$ has the $(n - 1)^\text{th}$ order Bellman property.
  Fix a a policy $\pi$ that is $T$-optimal for a teleo-environment $\varepsilon$, as well as a trajectory $\interleaved{ \elementString{a} }{ \elementString{s} } \in \left( \actionSpace \times \stateSpace \right)^{n}$ of length $n$ that is valid for evolving them. We want to show that $\policy' = \policy \evolution \interleaved{\elementString{s}}{\elementString{a}}$ is $\constrainedTransducers'$-optimal for $\environment' = \evolution \interleaved{ \elementString{a}}{ \interleaved{
      \elementString{s}}{ \noSuccess^{n} }}$, where $\constrainedTransducers' = \constrainedTransducers \evolution \interleaved{\elementString{s}}{\elementString{a}}$.

  Let us write $\policy'' = \policy \evolution \interleaved{
    \elementString{s}_{<n}}{ \elementString{a}_{<n} }$, and $\environment'' =
  \environment \evolution \interleaved{ \elementString{a}_{<n}}{ \interleaved{
      \elementString{s}_{<n}}{ \noSuccess^{n-1} } }$. Then, by the inductive hypothesis,
  $\policy''$ is $\constrainedTransducers''$-optimal for
  $\environment''$, where $\constrainedTransducers'' = \constrainedTransducers \evolution
    \left(\elementString{s}_{<n}, \elementString{a}_{<n} \right)$. Since $\policy' = \policy'' \evolution \left(
    \elementString{s}_{n}, \elementString{a}_n \right)$, $\environment' =
  \environment'' \evolution \left( \elementString{a}_n, \left(
      \elementString{s}_n, \noSuccess \right) \right)$, and $\constrainedTransducers''$ is closed
    under trajectory splicing,
    all we need to show is that $\constrainedTransducers''$ has the $1^\text{st}$ order Bellman property.
    In order to show this, we show that if any $\constrainedTransducers \subseteq \policies$ is closed under trajectory splicing, then it has the $1^\text{st}$ order Bellman property. The full Bellman property will then follow.

  To show this, fix some arbitrary constrained set $\constrainedTransducers$ of
  transducers that is closed under trajectory splicing, together with a new
  $\policy \colon \constrainedTransducers$ that is $\constrainedTransducers$-optimal for some $\environment$, as well as some pair $(s,a) \in \stateSpace \times \actionSpace$ valid for evolving them. Assume that $\policy' = \policy \evolution \left( s, a \right) =
  \transition_{\policy}\left( s, a \right)$ is not $\constrainedTransducers \evolution \left( s, a \right)$-optimal for $\environment' =
  \environment \evolution \left( a, \left( s, \noSuccess \right) \right)$,
  i.e.~there exists a policy $\policy^{*'} \colon \constrainedTransducers \evolution \left( s, a \right)$, such that $\successProbability\left( \policy^{*'}, \environment' \right) > \successProbability\left( \transition_{\policy}\left( s, a \right), \environment' \right)$. We will construct a policy $\policy^{*} \colon \constrainedTransducers$ such that $\successProbability\left( \policy^{*}, \environment \right) > \successProbability\left( \policy, \environment \right)$, thus contradicting the fact that $\policy$ is optimal for $\environment$.

  Define this policy as $\policy^{*} = \policy \splicedAlong{\left( s, a
    \right)} \policy^{*'}$. This belongs to $\constrainedTransducers$ since
  $\constrainedTransducers$ is closed under trajectory splicing.

  Now it remains to show that $
  \successProbability\left( \policy^{*}, \environment \right) < \successProbability\left( \policy, \environment \right).
  $
  Using equation \eqref{eq.success} considering the form of $\policy^{*}$ we get
  \begin{equation}
    \successProbability\left( \policy^{*}, \environment \right) =
    \sum_{u \in \stateSpace} \sum_{b \in \actionSpace}
    \probability_{\coupling\left( \policy, \environment \right)}\left( u, \success, b \right) +
    \probability_{\coupling\left( \policy, \environment \right)}\left( u, \noSuccess, b \right) \successProbability\left( \transition_{\policy^{*}}\left( u, b \right), \transition_{\environment}\left( b, \left( u, \noSuccess \right) \right) \right).
  \end{equation}
  Expanding the definition of $\transition_{\policy^{*}}$ and separating the term for $\left( a, s \right)$ we get
  \begin{multline}
    \successProbability\left( \policy^{*}, \environment \right) =\\
    \sum_{u \in \stateSpace} \sum_{\substack{b \in \actionSpace\\ \left( b, u \right) \neq \left( a, s \right)}} \left(
      \probability_{\coupling\left( \policy, \environment \right)}\left( u, \success, b \right) +
      \probability_{\coupling\left( \policy, \environment \right)}\left( u, \noSuccess, b \right) \successProbability\left( \transition_{\policy}\left( u, b \right), \transition_{\environment}\left( b, \left( u, \noSuccess \right) \right) \right)
    \right) \\
    + \probability_{\coupling\left( \policy, \environment \right)}\left( s, \success, a \right) +
    \probability_{\coupling\left( \policy, \environment \right)}\left( s, \noSuccess, a \right) \successProbability\left( \policy^{*'}, \transition_{\environment}\left( a, \left( s, \noSuccess \right) \right) \right).
  \end{multline}
  Noting that $\transition_{\environment}\left( a, \left( s, \noSuccess \right) \right) = \environment'$ and using the inequality on probabilities of success considering that $\probability_{\coupling\left( \policy, \environment \right)}\left( s, \noSuccess, a \right) \neq 0$ because the evolution of the environment was valid, we can now write
  \begin{multline}
    \successProbability\left( \policy^{*}, \environment \right) >\\
    \sum_{u \in \stateSpace} \sum_{\substack{b \in \actionSpace\\ \left( b, u \right) \neq \left( a, s \right)}} \left(
      \probability_{\coupling\left( \policy, \environment \right)}\left( u, \success, b \right) +
      \probability_{\coupling\left( \policy, \environment \right)}\left( u, \noSuccess, b \right) \successProbability\left( \transition_{\policy}\left( u, b \right), \transition_{\environment}\left( b, \left( u, \noSuccess \right) \right) \right)
    \right) \\
    + \probability_{\coupling\left( \policy, \environment \right)}\left( s, \success, a \right) +
    \probability_{\coupling\left( \policy, \environment \right)}\left( s, \noSuccess, a \right) \successProbability\left( \transition_{\policy}\left( s, a \right), \transition_{\environment}\left( a, \left( s, \noSuccess \right) \right) \right),
  \end{multline}
  and after reintegrating the last term we can use equation \eqref{eq.success} to get $
    \successProbability\left( \policy^{*}, \environment \right) > \successProbability\left( \policy, \environment \right),
  $
  finishing the proof.
\end{proof}

While being closed under trajectory splicing is a sufficient condition for the
Bellman property to hold it is not necessary. Consider the one-flip transducers
defined in Example \ref{ex.oneFlip}. They are not closed under trajectory
splicing, but we assert (without a formal proof) that the Bellman theorem still holds for them. This is due to the fact
that, as we will see later, for any environment there is always an optimal
deterministic policy, together with the fact that
one-flip transducers are closed under a `limited version'
of trajectory splicing, where you only replace future policies with
deterministic ones.

The fact that transducers obey the (value-laden) Bellman theorem has an important consequence.
Let us take stock of what it entails.

Our thesis is that if a physical system is optimal for some problem (specified, in our case, as a teleo-environment), then we can attribute beliefs and goals to the system that correspond to the given problem.
In our case, this means that if a policy $\pi$ is optimal for a given teleo-environment $\varepsilon$, then it is permissible to attribute to $\pi$ the goal of achieving the telos signal at least once, and the belief that the dynamics of the environment (and the telos signal) are given by $\varepsilon$.
Corollary~\ref{cor.allTransducersHasBellmanProperty} says that attributing beliefs in this way is consistent with Bayesian updating in the following sense:

Suppose we can attribute beliefs $\varepsilon$ to $\pi$, and that $\pi$ then emits action $a:A$ and receives sensor value $s:S$ from the environment, evolving into $\pi\bullet(s,a)$.
It is then permissible to attribute beliefs $\varepsilon\bullet(a,s,\bot)$ to $\pi\bullet(s,a)$.
We have established in Section \ref{sec.transducers} that evolving a transducer is closely related to Bayesian filtering.
Thus, after the update, $\pi\bullet(s,a)$ can be attributed beliefs that corresponding to performing a Bayesian filtering update on the environment.

In this story the filtering step conditions not only on the agent's received sensor data $s$ but also on success not being achieved on the current time step (i.e.\ the $\bot$ symbol), even though the agent doesn't have access to the telos channel, and indeed success might actually have occurred.
We argued that we can understand this `from the agent's point of view' by noting that since the agent's (attributed) goal is to achieve success at least once, if success \emph{has} already occurred, the agent's actions no longer matter.
As we will eventually see, in Example~\ref{ex.noSensorimotorOnlyFiltering}, if an agent would condition only on $s$ and $a$ but not $\bot$, it would not necessarily be optimal for the resulting conditioned environment.

\subsection{UFS transducers do not filter}
\label{sec.noUfsFiltering}

There are classes of transducers for which the Bellman property does not hold. For a fairly general and nontrivial example we will look at unifilar finite state transducers.

The somewhat simpler absent-minded driver example in Section \ref{sec.absentMinded} could also
be used to illustrate a situation in which the Bellman
property does not hold. However, its simplicity might give the incorrect
impression that this lack is a corner case, both in terms of the class being
single state transducers, as well as the optimal policy being nondeterministic,
which is completely unrelated to filtering.

First we will require some definitions, which will also be used for future examples and proofs.
\begin{definition}[Success and nothing distributions]
  \label{def.successDistribution}
  Let $\stateSpace$ be an $n$-element state space. We define the \emph{(uniform) success and nothing distributions} on $\stateSpace \times \telosSpace$ as
  \begin{equation}
    \successDistribution\left( s, t \right) = \begin{cases}
      \frac{1}{n} & \textrm{ if } t = \success,\\
      0 & \textrm{ otherwise.}
    \end{cases}
  \end{equation}
  and
  \begin{equation}
    \nothingDistribution\left( s, t \right) = \begin{cases}
      \frac{1}{n} & \textrm{ if } t = \noSuccess,\\
      0 & \textrm{ otherwise.}
    \end{cases}
  \end{equation}
  respectively.
\end{definition}
\newcommand{\despairEnvironment}{\environment_{\frac{1}{2}}}
\begin{definition}[Doom and despair environments]
  \label{def.doomAndDespair}
  The \emph{doom environment} is one that never returns success, formally defined as
  \begin{equation}
    \doomEnvironment = \left( \nothingDistribution, \left( a, \left( s, t \right) \right) \mapsto \doomEnvironment \right),
  \end{equation}
  while the \emph{despair environment} has $\frac{1}{2}$ probability of success for one step and then changes into the doom environment, regardless of which action is taken
  \begin{equation}
    \despairEnvironment = \left( \frac{1}{2}\nothingDistribution + \frac{1}{2}\successDistribution, \left( a, \left( s, t \right) \right) \mapsto \doomEnvironment \right).
  \end{equation}
\end{definition}
\begin{example}[No UFS filtering]
  \label{ex.noUfsFiltering}
  Let $\stateSpace = \left\{ 1, \dots, n \right\}$, $\actionSpace = \left\{ 1, \dots, n, n+1 \right\}$ and consider only policies constrained to $\unifilarFiniteStateTransducers{n} \subseteq \policies$. We will build an explicit counterexample to filtering in this setting, defining the necessary environment step by step.

  \newcommand{\mimicEnvironment}{\environment_m}
  First we define a family of (weighted) mimic environments indexed by states
  \begin{equation}
    \mimicEnvironment\left( s' \right) = \left( \frac{2^{s'} - 1}{2^{s'}}\nothingDistribution + \frac{1}{2^{s'}}\successDistribution, \left( a, \left( s, t \right) \right) \mapsto \begin{cases}
      \mimicEnvironment\left( s \right) & \textrm{ if } a = s',\\
      \despairEnvironment & \textrm{ if } a = n + 1,\\
      \doomEnvironment & \textrm{ otherwise}
    \end{cases} \right).
\end{equation}
The name refers to the fact that an optimal policy will have to mimic what the environment is doing:
if the previous sensor input was $k$ then the next action should also be $k$.

It now remains to define the environment itself
\begin{equation}
  \environment = \left( \nothingDistribution, \left( a, \left( s, t \right) \right) \mapsto \begin{cases}
    \mimicEnvironment\left( s \right) & \textrm{ if } a = n + 1,\\
    \doomEnvironment & \textrm{ otherwise}
  \end{cases} \right).
\end{equation}
In other words, the environment never returns success on the first time step, and the action must be $n+1$.

The policy induced by the unifilar $n$ state machine (with $\unifilarStateSet = \actionSpace$)
\begin{equation}
  \policy^{*'} = \left( s', x \mapsto x, \left( x, s, a \right) \mapsto s \right)
\end{equation}
is optimal for every mimic environment $\mimicEnvironment\left( s' \right)$. (We abuse notation and write $s'$ for the deterministic distribution always returning $s'$.) This policy has success probability $1$, since it can succeed with a nonzero constant expected probability in every step, so it will eventually succeed. Any other policy will have a nonzero probability of not repeating the previous state as its action, which would make it fail forever (since then the environment gives at most one constant probability of success and afterwards behaves like the doom environment), giving a probability of success below $1$.

The policy induced by the unifilar $n$ state machine
\begin{equation}
  \policy = \left( n, x \mapsto \begin{cases}
    x & \textrm{ if } x \neq n,\\
    n+1 & \textrm{ if } x = n
  \end{cases}, \left( x, s, a \right) \mapsto s \right).
\end{equation}
is optimal for the environment itself.

This transducer is (uniquely) optimal for $\environment$ within the class of $n$-state UFS transducers but it is not optimal among all transducers.
(In fact there is an $(n+1)$-state UFS transducer that outperforms it.)

We argue that $\policy$ is optimal informally rather than giving a proof. The policy has to return $n+1$ as its first action, otherwise its probability of success is $0$. This constrains the output function to map some memory state to $n+1$ making it impossible for the future function to be a perfect mimic. Mimicking the first $n-1$ states/actions is the uniquely optimal solution, because they have the best chance of providing success when they occur, while all states have the same chance of occurring. At the same time when we no longer can mimic the environment, the policy is optimal when it returns $n + 1$ for another shot at a success. In the end the success probability of such an imperfect mimic is $\frac{2^{n-1} + 2^{n-2} - 1}{2^{n} - 1}$ and any other unifilar $n$ state will have a lower result, although it is tedious to check.

To show that filtering is violated, we need to find some policy that is the result of evolving a
$\unifilarFiniteStateTransducers{n}$ policy by $\left( s, n + 1 \right)$, such that it has a greater success probability than  $\policy \evolution \left( s, n + 1 \right)$.
  This policy cannot be $\policy^{*'}$, since it can't be obtained by evolving a
$\unifilarFiniteStateTransducers{n}$ policy by $\left( s, n + 1 \right)$.
This follows from the fact that $\policy^{*'}\in \unifilarFiniteStateTransducers{n}\setminus\unifilarFiniteStateTransducers{n-1}$ and the fact that $\policy^{*'}$ has no state in which it has a nonzero
  probability of outputting $n+1$.
  However, we can consider a policy $\policy^{*'}_\alpha$ that acts exactly like
  $\policy^{*'}$, except it has probability $\alpha$ of returning action $n + 1$
  in state $n$. If we now make $\alpha$ arbitrarily small the success chance of
  such a policy after the initial (extremely improbable) evolution can be arbitrarily close to $1$.
\end{example}

Interestingly the environment in the above example can itself be defined as a UFS machine with $n + 3$ states -- one for the initial state, one for despair, one for doom, and $n$ for the usual states remembering history. We do not know whether the constant $3$ can be lowered.

\subsection{Sensorimotor-only Filtering}
\label{sec.pureFiltering}

Theorem \ref{thm.filtering} requires that the environment is evolved under the assumption that no success is returned at every step. We will explore how important this assumption really is, and in which situations it can be dropped. First let us define what dropping this assumption would even look like.
\begin{definition}[Success-ambivalent evolution]
  \label{def.genericEvolution}
  Let $\environment \colon \environments$ be an environment. We define its \emph{success-ambivalent evolution} to be
  \begin{multline}
    \environment \evolution \left( a, s \right) =
    \frac{\probability_{\environment}\left( \left( s, \noSuccess \right) \right)}{\probability_{\environment}\left( \left( s, \noSuccess \right) \right) + \probability_{\environment}\left( \left( s, \success \right) \right)} \transition_{\environment}\left( a, \left( s, \noSuccess \right) \right) +\\
    \frac{\probability_{\environment}\left( \left( s, \success \right) \right)}{\probability_{\environment}\left( \left( s, \noSuccess \right) \right) + \probability_{\environment}\left( \left( s, \success \right) \right)} \transition_{\environment}\left( a, \left( s, \success \right) \right),
  \end{multline}
  that is the mixture of transducers resulting from either a success or no success transition proportional to the relative probabilities of such transitions.

  We use the same notation as for normal evolutions, but context should disambiguate sufficiently well to avoid confusing the reader.
\end{definition}
Now consider a special class of environments that achieve success at most once at a given trajectory.
\newcommand{\singleSuccess}{W}
\begin{definition}[Single success environments]
  \label{def.singleSuccess}
  Let $\environment \colon \environments$ be an environment. We call it a \emph{single success environment} if no trajectory $\interleaved{ \elementString{a}}{\interleaved{\elementString{s}}{\elementString{t}}}$ such that there exist two indices $i, j \in \N$ for which $\elementString{t}_i = \elementString{t}_j = \success$ is a valid evolution for $\environment$. We denote the set of all such environments as $\singleSuccess$.
\end{definition}
We can inject any environment into $\singleSuccess$.
\newcommand{\successTruncation}{Z}
\newcommand{\dooming}{D}
\newcommand{\desuccessing}{d}
\begin{definition}[Single success truncation]
  \label{def.successTruncation}
  We define the \emph{single success truncation} to be a function $\successTruncation \colon \environments \to \singleSuccess$ defined as
  \begin{equation}
    \successTruncation\left( \environment \right) = \left( \probability_{\environment}, \left( a, \left( s, t \right) \right) \mapsto \begin{cases}
      \successTruncation\left( \transition_{\environment}\left( a, \left( s, t \right) \right) \right) & \textrm{ if } t = \noSuccess,\\
      \dooming\left( \transition_{\environment}\left( a, \left( s, t \right) \right) \right) & \textrm{ if } t = \success
    \end{cases}\right),
\end{equation}
where $\dooming \colon \environments \to \environments$ is the \emph{dooming} function that removes all successes, defined as
\begin{equation}
  \dooming\left( \environment \right) = \left( \desuccessing\left( \probability_{\environment} \right), \left( a, \left( s, t \right) \right) \mapsto \dooming\left( \environment \evolution \left( a, s \right) \right) \right),
\end{equation}
with $\desuccessing$ defined simply as
\begin{equation}
  \desuccessing\left( p \right)\left( \left( s, t \right) \right) = \begin{cases}
    p\left( \left( s, \success \right) \right) + p\left( \left( s, \noSuccess \right) \right) & \textrm{ if } t = \noSuccess,\\
    0 & \textrm{ otherwise.}
  \end{cases}
\end{equation}
It's easy to see that $\successTruncation$ is an identity on $\singleSuccess$, justifying the name.
\end{definition}
It turns out that, because we only care about success ever occurring, single success truncation preserves success probability.
\begin{lemma}[Truncation preserves success probability]
  \label{lem.successRespectsTruncation}
  Let $\policy \colon \policies$ and $\environment \colon \environments$ be an arbitrary policy-environment pair. Then
  \begin{equation}
    \successProbability\left( \policy, \environment \right) = \successProbability\left( \policy, \successTruncation\left( \environment \right) \right).
  \end{equation}

  \begin{proof}
    We will prove this for success sequences and apply Lemma \ref{lem.successRespectsPoset}.

    Since truncation preserves associated probabilities, it sufficess to inspect the transition function
    \begin{equation}
      \sum_{s \in \stateSpace} \sum_{a \in \actionSpace}
      \probability_{\coupling\left( \policy, \successTruncation\left( \environment \right) \right)}\left( s, \noSuccess, a \right) \successSequence\left( \coupling\left( \transition_{\policy}\left( s, a \right), \transition_{\successTruncation\left( \environment \right)}\left( a, \left( s, \noSuccess \right) \right) \right) \right) =
    \end{equation}
    applying the definition of single success truncation
    \begin{equation}
      \sum_{s \in \stateSpace} \sum_{a \in \actionSpace}
      \probability_{\coupling\left( \policy, \environment \right)}\left( s, \noSuccess, a \right) \successSequence\left( \coupling\left( \transition_{\policy}\left( s, a \right), \successTruncation\left( \transition_{\environment}\left( a, \left( s, \noSuccess \right) \right) \right) \right) \right).
    \end{equation}
    It remains to use the coinductive hypothesis to finish the proof.
  \end{proof}
\end{lemma}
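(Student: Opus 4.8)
The plan is to prove the slightly stronger statement that single success truncation leaves the \emph{success sequence} unchanged, i.e.\ that $\successSequence(\coupling(\policy,\environment)) = \successSequence(\coupling(\policy,\successTruncation(\environment)))$ as elements of $\boundedSumSequences{1}$ for every policy $\policy$ and environment $\environment$. The claim then follows at once by applying $\bssSum$ to both sides, since by Definition \ref{def.success} we have $\successProbability = \bssSum \circ \successSequence \circ \coupling$ and $\bssSum$ is a function (so equal success sequences have equal sums).

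The key observation is that the right-hand side of Definition \ref{def.successSequence} depends on a coupled transducer $\coupledTransducer$ only through the numbers $\probability_{\coupledTransducer}(s,\success,a)$ and $\probability_{\coupledTransducer}(s,\noSuccess,a)$ and through the transitions $\transition_{\coupledTransducer}(\star,(s,\noSuccess,a))$; it never inspects transitions following a $\success$ output. For a coupling these data are $\probability_{\policy}(a)\probability_{\environment}(s,t)$ and $\coupling(\transition_{\policy}(s,a),\transition_{\environment}(a,(s,t)))$, and by Definition \ref{def.successTruncation} the only way $\successTruncation(\environment)$ differs from $\environment$ is that its transition on a $\success$ output is replaced by a $\dooming$-image: the output distribution $\probability_{\successTruncation(\environment)}$ equals $\probability_{\environment}$, and the transition on a $\noSuccess$ output is $\successTruncation(\transition_{\environment}(a,(s,\noSuccess)))$. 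Hence the first component of $\successSequence(\coupling(\policy,\successTruncation(\environment)))$ agrees with that of $\successSequence(\coupling(\policy,\environment))$, while its second component is the same scalar $\sum_{s,a}\probability_{\coupling(\policy,\environment)}(s,\noSuccess,a)$ times $\successSequence$ applied to a mixture over $(s,a)$ of the transducers $\coupling(\transition_{\policy}(s,a),\successTruncation(\transition_{\environment}(a,(s,\noSuccess))))$, whereas for $\environment$ itself the corresponding mixture has $\successTruncation$ removed and exactly the same mixing weights. Applying Lemma \ref{lem.successSequenceRespectsMixtures} to commute $\successSequence$ past these mixtures reduces the equality of the two tails, term by term, to $\successSequence(\coupling(\transition_{\policy}(s,a),\transition_{\environment}(a,(s,\noSuccess)))) = \successSequence(\coupling(\transition_{\policy}(s,a),\successTruncation(\transition_{\environment}(a,(s,\noSuccess)))))$, which is precisely the coinductive hypothesis applied to the deconstructed pair $(\transition_{\policy}(s,a),\transition_{\environment}(a,(s,\noSuccess)))$. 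This closes the coinduction.

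The only genuinely non-mechanical step — and the one I would be most careful about — is the first observation, that the success sequence is blind to the environment's behaviour after a success. Once this is clear, the $\dooming$ machinery inside $\successTruncation$ never actually comes into play, since $\successTruncation$ produces a $\dooming$-image only on a $\success$ transition while the recursion computing $\successSequence$ only ever follows $\noSuccess$ transitions; this is why the definition of $\dooming$ plays no role. The remaining steps — unfolding $\coupling$, checking that the mixing weights coincide on the two sides, and the usual convention that $\noSuccess$ transitions of probability zero may be ignored as they contribute nothing to the mixture — are the same bookkeeping already carried out in Lemmas \ref{lem.couplingOfMixtures} and \ref{lem.successSequenceRespectsMixtures}. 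An essentially equivalent alternative that avoids coinduction on bounded sum sequences would be to prove $\successProbability_n(\policy,\environment) = \successProbability_n(\policy,\successTruncation(\environment))$ by ordinary induction on $n$ from the recurrence defining the $n$-step success probability and then let $n\to\infty$; the inductive step rests on the same two facts, that $\successTruncation$ preserves output distributions and commutes with $\noSuccess$-transitions.
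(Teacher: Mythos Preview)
Your proposal is correct and follows essentially the same route as the paper: prove the stronger equality of success sequences by coinduction, using that $\successTruncation$ preserves output probabilities and that its $\noSuccess$-transition is $\successTruncation(\transition_{\environment}(a,(s,\noSuccess)))$, so the coinductive hypothesis applies. Your write-up is in fact a bit more careful than the paper's (you explicitly invoke Lemma~\ref{lem.successSequenceRespectsMixtures} to handle the mixture in the tail and you note that the $\dooming$ branch is never visited), and your closing remark about the alternative induction on $\successProbability_n$ is a sound equivalent.
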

In particular this means that if a policy is optimal for an environment $\environment$, then it is also optimal for its truncated version $\successTruncation\left( \environment \right)$.

There is one more lemma that will prove useful in the upcoming theorem proof.
\begin{lemma}
  \label{lem.evolvingSingleSuccessAssumesFailure}
  Let $\policy \colon \policies$ be a policy, $\environment \colon \singleSuccess$ be a single success environment, and $\left( a, s \right) \in \actionSpace \times \stateSpace$ be an action-state pair. Then
  \begin{equation}
    \successProbability\left( \policy, \environment \evolution \left( a, s \right) \right) = \frac{\probability_{\environment}\left( \left( s, \noSuccess \right) \right)}{\probability_{\environment}\left( \left( s, \noSuccess \right) \right) + \probability_{\environment}\left( \left( s, \success \right) \right)} \successProbability\left( \policy, \environment \evolution \left( a, \left( s, \noSuccess \right) \right) \right).
  \end{equation}

  \begin{proof}
    By the definition of success-ambivalent evolution and Lemma \ref{lem.successOfMixtures} we can write the left hand side as
    \begin{multline}
      \frac{\probability_{\environment}\left( \left( s, \noSuccess \right) \right)}{\probability_{\environment}\left( \left( s, \noSuccess \right) \right) + \probability_{\environment}\left( \left( s, \success \right) \right)} \successProbability\left(\policy, \environment \evolution \left( a, \left( s, \noSuccess \right) \right) \right) +\\
      \frac{\probability_{\environment}\left( \left( s, \success \right) \right)}{\probability_{\environment}\left( \left( s, \noSuccess \right) \right) + \probability_{\environment}\left( \left( s, \success \right) \right)} \successProbability\left( \environment \evolution \left( a, \left( s, \success \right) \right) \right),
    \end{multline}
    but since $\environment$ is single success, $\successProbability\left( \environment \evolution \left( a, \left( s, \success \right) \right) \right) = 0$, leaving
    \begin{equation}
      \frac{\probability_{\environment}\left( \left( s, \noSuccess \right) \right)}{\probability_{\environment}\left( \left( s, \noSuccess \right) \right) + \probability_{\environment}\left( \left( s, \success \right) \right)} \successProbability\left(\policy, \environment \evolution \left( a, \left( s, \noSuccess \right) \right) \right),
    \end{equation}
    as required.
  \end{proof}
\end{lemma}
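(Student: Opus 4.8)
The plan is to unfold the success-ambivalent evolution on the left-hand side into a genuine two-term mixture of environments, push the success probability through that mixture with Lemma~\ref{lem.successOfMixtures}, and then discard the stray term using the single-success hypothesis.

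First I would apply Definition~\ref{def.genericEvolution} to rewrite the success-ambivalent evolution $\environment \evolution \left( a, s \right)$ occurring on the left as the mixture
\[
  \lambda_{\noSuccess}\,\transition_{\environment}\left( a, \left( s, \noSuccess \right) \right) + \lambda_{\success}\,\transition_{\environment}\left( a, \left( s, \success \right) \right),
\]
where $\lambda_{\noSuccess} = \probability_{\environment}\left( s, \noSuccess \right) / \left( \probability_{\environment}\left( s, \noSuccess \right) + \probability_{\environment}\left( s, \success \right) \right)$ and $\lambda_{\success}$ is the complementary weight, and where $\transition_{\environment}\left( a, \left( s, \noSuccess \right) \right)$ is by definition exactly the ordinary one-step evolution $\environment \evolution \left( a, \left( s, \noSuccess \right) \right)$. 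Then I would invoke Lemma~\ref{lem.successOfMixtures} — with the policy side taken to be the trivial one-term mixture consisting of $\policy$ alone — to obtain
\[
  \successProbability\left( \policy, \environment \evolution \left( a, s \right) \right) = \lambda_{\noSuccess}\,\successProbability\left( \policy, \environment \evolution \left( a, \left( s, \noSuccess \right) \right) \right) + \lambda_{\success}\,\successProbability\left( \policy, \transition_{\environment}\left( a, \left( s, \success \right) \right) \right).
\]

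The crux is then to show that the second summand vanishes. If $\probability_{\environment}\left( s, \success \right) = 0$ this is immediate, since $\lambda_{\success} = 0$; otherwise $\left( s, \success \right)$ lies in the support of $\probability_{\environment}$, so $\transition_{\environment}\left( a, \left( s, \success \right) \right)$ is a well-defined environment which, by Definition~\ref{def.singleSuccess}, emits $\success$ on no valid trajectory: any trajectory of it carrying a $\success$, prefixed by the step $\left( a, \left( s, \success \right) \right)$, would be a valid evolution of $\environment$ carrying two $\success$ symbols on its telos channel, contradicting the single-success assumption. Hence, coupled with any policy, the single-step success mass $\sum_{s',a'}\probability_{\coupling\left( \policy, \environment' \right)}\left( s', \success, a' \right)$ in equation~\eqref{eq.success} is zero for every environment $\environment'$ reachable from $\transition_{\environment}\left( a, \left( s, \success \right) \right)$ by a valid evolution, so a one-line induction on $n$ gives $\successProbability_{n}\left( \policy, \transition_{\environment}\left( a, \left( s, \success \right) \right) \right) = 0$ for all $n$, whence $\successProbability\left( \policy, \transition_{\environment}\left( a, \left( s, \success \right) \right) \right) = 0$. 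Substituting this back leaves precisely $\lambda_{\noSuccess}\,\successProbability\left( \policy, \environment \evolution \left( a, \left( s, \noSuccess \right) \right) \right)$, the claimed right-hand side. (The statement is to be read as concerning only pairs $\left( a, s \right)$ for which the success-ambivalent evolution is defined, i.e.\ for which $\probability_{\environment}\left( s, \noSuccess \right) + \probability_{\environment}\left( s, \success \right) > 0$.)

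I expect the only real obstacle to be the zero-success-probability claim — in particular making the ``prefix a $\success$-trajectory'' argument precise, i.e.\ checking that an environment none of whose valid evolutions ever emit $\success$ genuinely has success probability $0$ against every policy. This falls out of the shape of equation~\eqref{eq.success} (the single-step success mass is $0$ and the recursive term only ever reaches environments with the same property), but it is worth the brief induction; everything else is routine bookkeeping with the mixture definitions of Definitions~\ref{def.transMixture} and \ref{def.genericEvolution} together with the already-proved Lemma~\ref{lem.successOfMixtures}.
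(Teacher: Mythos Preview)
Your proposal is correct and follows essentially the same route as the paper: expand the success-ambivalent evolution as a two-term mixture via Definition~\ref{def.genericEvolution}, push $\successProbability$ through with Lemma~\ref{lem.successOfMixtures}, and kill the $\success$-branch using the single-success hypothesis. The only difference is that you actually justify the vanishing step (handling the $\probability_{\environment}(s,\success)=0$ case and giving the induction on $\successProbability_n$), whereas the paper simply asserts $\successProbability\left(\policy,\environment \evolution \left( a, \left( s, \success \right) \right)\right)=0$ without further comment.
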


This allows us to prove the sensorimotor-only filtering result.
\begin{theorem}[Sensorimotor-only Bellman theorem for teleo-optimality]
  \label{thm.pureFiltering}
  Let $\constrainedTransducers \subseteq \policies$ be a set of constrained
  policies that is closed under trajectory splicing. Let $\policy \colon
  \constrainedTransducers$ be a policy $\constrainedTransducers$-optimal for the
  single success environment $\environment \colon \singleSuccess$ and $\left(
    \elementString{a}, \elementString{s} \right) \in \actionSpace^{n} \times
  \stateSpace^{n}$ be a trajectory of length $n$. Then if $\environment' =
  \environment \evolution \interleaved{ \elementString{a}}{ \elementString{s} }$
  and $\policy' = \policy \evolution \interleaved{ \elementString{s}}{
    \elementString{a} }$ are valid, then $\policy'$ is
  $\constrainedTransducers\evolution \interleaved{ \elementString{s}}{
    \elementString{a} }$-optimal for $\environment'$.

  \begin{proof}
    As previously it suffices to prove the claim for evolution by a single pair $\left( a, s \right)$ and get the general statement by induction.

    Given a policy $\policy_{0} \colon \constrainedTransducers$ by the value-laden Bellman theorem we know that
    \begin{equation}
      \successProbability\left( \policy_{0}, \environment \evolution \left( a, \left( s, \noSuccess \right) \right) \right) \leq \successProbability\left( \policy \evolution \left( s, a \right), \environment \evolution \left( a, \left( s, \noSuccess \right) \right) \right).
    \end{equation}
    We can multiply this inequality
    \begin{multline}
      \frac{\probability_{\environment}\left( \left( s, \noSuccess \right) \right)}{\probability_{\environment}\left( \left( s, \noSuccess \right) \right) + \probability_{\environment}\left( \left( s, \success \right) \right)}\successProbability\left( \policy_{0}, \environment \evolution \left( a, \left( s, \noSuccess \right) \right) \right) \leq\\
      \frac{\probability_{\environment}\left( \left( s, \noSuccess \right) \right)}{\probability_{\environment}\left( \left( s, \noSuccess \right) \right) + \probability_{\environment}\left( \left( s, \success \right) \right)}\successProbability\left( \policy \evolution \left( s, a \right), \environment \evolution \left( a, \left( s, \noSuccess \right) \right) \right),
    \end{multline}
    and apply Lemma \ref{lem.evolvingSingleSuccessAssumesFailure} to both sides getting
    \begin{equation}
      \successProbability\left( \policy_{0}, \environment \evolution \left( a, s \right) \right) \leq \successProbability\left( \policy \evolution \left( s, a \right), \environment \evolution \left( a, s \right) \right)
    \end{equation}
    finishing the proof.
  \end{proof}
\end{theorem}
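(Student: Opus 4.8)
The plan is to reduce the statement to the single-step case $n = 1$ and then iterate, exactly as in the proof of Theorem~\ref{thm.filtering}, with that theorem and Lemma~\ref{lem.evolvingSingleSuccessAssumesFailure} doing all the real work. Before the induction can close I would check that the hypotheses are stable under one step of evolution. The class $\constrainedTransducers \evolution \left( s, a \right)$ is again closed under trajectory splicing, by the remark following Definition~\ref{def.cutsCounductive}. And the success-ambivalent evolution $\environment \evolution \left( a, s \right)$ of a single success environment is again a single success environment: by Definition~\ref{def.genericEvolution} it is a mixture of the two value-laden evolutions $\environment \evolution \left( a, \left( s, \noSuccess \right) \right)$ and $\environment \evolution \left( a, \left( s, \success \right) \right)$; a trajectory is valid for a mixture only if it is valid for one of the summands; and both summands are value-laden evolutions of $\environment \in \singleSuccess$, hence admit no trajectory with two successes.

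For the single step, I would fix $\left( a, s \right) \in \actionSpace \times \stateSpace$ valid for evolving both $\policy$ and $\environment$, and write $\policy' = \policy \evolution \left( s, a \right)$, $\environment' = \environment \evolution \left( a, s \right)$ (success-ambivalent), and $\constrainedTransducers' = \constrainedTransducers \evolution \left( s, a \right)$. The value-laden Bellman theorem (Theorem~\ref{thm.filtering}) applies, since $\constrainedTransducers$ is closed under trajectory splicing, and gives that $\policy'$ is $\constrainedTransducers'$-optimal for the \emph{value-laden} evolution $\environment \evolution \left( a, \left( s, \noSuccess \right) \right)$; that is, $\successProbability\left( \policy_0, \environment \evolution \left( a, \left( s, \noSuccess \right) \right) \right) \leq \successProbability\left( \policy', \environment \evolution \left( a, \left( s, \noSuccess \right) \right) \right)$ for every $\policy_0 \colon \constrainedTransducers'$.

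To convert this into a statement about the success-ambivalent $\environment'$, I would multiply both sides by the nonnegative constant $c = \probability_{\environment}\left( s, \noSuccess \right) / \left( \probability_{\environment}\left( s, \noSuccess \right) + \probability_{\environment}\left( s, \success \right) \right)$, which preserves the inequality, and then apply Lemma~\ref{lem.evolvingSingleSuccessAssumesFailure} to each side (legitimate because $\environment \in \singleSuccess$), using it to rewrite $c \cdot \successProbability\left( \rho, \environment \evolution \left( a, \left( s, \noSuccess \right) \right) \right)$ as $\successProbability\left( \rho, \environment \evolution \left( a, s \right) \right)$ for any policy $\rho$. This yields $\successProbability\left( \policy_0, \environment' \right) \leq \successProbability\left( \policy', \environment' \right)$ for all $\policy_0 \colon \constrainedTransducers'$, which is precisely $\constrainedTransducers'$-optimality of $\policy'$ for $\environment'$. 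Iterating this single-step result $n$ times --- using the stability observations above and reading $\environment \evolution \interleaved{ \elementString{a} }{ \elementString{s} }$ as the composite of single-step success-ambivalent evolutions --- proves the theorem.

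I do not expect a genuine obstacle here, since the content has been pushed into Theorem~\ref{thm.filtering} and Lemma~\ref{lem.evolvingSingleSuccessAssumesFailure}; the theorem is essentially a two-line corollary of the two. The only points that need care are the stability of the single-success hypothesis under the inductive step (the mixture argument above) and the degenerate case $\probability_{\environment}\left( s, \noSuccess \right) = 0$, in which the value-laden evolution $\environment \evolution \left( a, \left( s, \noSuccess \right) \right)$ is formally undefined: there $c = 0$ and, since $\environment$ is single success, every policy in $\constrainedTransducers'$ has success probability $0$ in $\environment'$, so the conclusion holds trivially under the usual convention of glossing over zero-probability transitions.
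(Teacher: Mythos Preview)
Your proposal is correct and takes essentially the same approach as the paper: reduce to the single-step case, invoke Theorem~\ref{thm.filtering}, multiply by the constant $c$, and apply Lemma~\ref{lem.evolvingSingleSuccessAssumesFailure}. You are in fact more careful than the paper, which glosses over the stability of the single-success hypothesis under success-ambivalent evolution and the degenerate $\probability_{\environment}\left( s, \noSuccess \right) = 0$ case that you explicitly address.
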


It might also be worth seeing why we have to be restricted to single success environments in the above theorem, however we will only show this at the end of the last section, when we have the tools to construct a simple example.

Lemma \ref{lem.successRespectsTruncation} and Theorem \ref{thm.pureFiltering}
together suggest the following corollary.

\begin{corollary}
  Let $\policy \colon \constrainedTransducers$ be $\constrainedTransducers$-optimal
  for $\environment \colon \environments$. Then $\policy$ is
  $\constrainedTransducers$-optimal for $\successTruncation\left( \environment
  \right)$, and $\policy \evolution \interleaved{
    \elementString{s}}{\elementString{a} }$ is
  $\constrainedTransducers\evolution
  \interleaved{\elementString{s}}{\elementString{a} }$-optimal for
  $\successTruncation\left( \environment\right)\evolution
  \interleaved{\elementString{a}}{\elementString{s} }$,
  as long as all the evolutions are valid.
  \begin{proof}
      The policy $\policy$ is $\constrainedTransducers$-optimal for the truncation $\successTruncation\left( \environment
  \right)$ by Lemma \ref{lem.successRespectsTruncation}, and the remainder follows directly from Theorem \ref{thm.pureFiltering} since $\successTruncation\left( \environment  \right)$ is a single success environment.
  \end{proof}
\end{corollary}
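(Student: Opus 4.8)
The plan is to derive both assertions directly from Lemma~\ref{lem.successRespectsTruncation} (truncation preserves success probability) and Theorem~\ref{thm.pureFiltering} (the sensorimotor-only Bellman theorem), with essentially no new computation. The first assertion is a transfer-of-optimality argument riding on an equality of success probabilities; the second is just Theorem~\ref{thm.pureFiltering} instantiated at the truncated environment.

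First I would establish that $\policy$ is $\constrainedTransducers$-optimal for $\successTruncation(\environment)$. Fix any $\policy' \colon \constrainedTransducers$. Lemma~\ref{lem.successRespectsTruncation} gives $\successProbability(\policy', \successTruncation(\environment)) = \successProbability(\policy', \environment)$ and $\successProbability(\policy, \successTruncation(\environment)) = \successProbability(\policy, \environment)$. Combining these two identities with the optimality hypothesis $\successProbability(\policy', \environment) \le \successProbability(\policy, \environment)$ yields $\successProbability(\policy', \successTruncation(\environment)) \le \successProbability(\policy, \successTruncation(\environment))$. Since $\policy'$ was an arbitrary element of $\constrainedTransducers$, this is precisely $\constrainedTransducers$-optimality of $\policy$ for $\successTruncation(\environment)$.

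For the second assertion I would first observe that $\successTruncation(\environment)$ is a single success environment, i.e.\ $\successTruncation(\environment) \colon \singleSuccess$; this is built into the codomain of $\successTruncation$ by Definition~\ref{def.successTruncation} (on the first $\success$ output the construction branches into $\dooming$, which deletes all later successes), so it can be read off rather than reproved. I would then apply Theorem~\ref{thm.pureFiltering} with $\successTruncation(\environment)$ in place of $\environment$ and with the policy $\policy$, which by the previous paragraph is $\constrainedTransducers$-optimal for $\successTruncation(\environment)$. Since the validity of $\policy \evolution \interleaved{\elementString{s}}{\elementString{a}}$ and of the success-ambivalent evolution $\successTruncation(\environment) \evolution \interleaved{\elementString{a}}{\elementString{s}}$ (Definition~\ref{def.genericEvolution}) is assumed in the statement, the theorem immediately delivers that $\policy \evolution \interleaved{\elementString{s}}{\elementString{a}}$ is $\constrainedTransducers \evolution \interleaved{\elementString{s}}{\elementString{a}}$-optimal for $\successTruncation(\environment) \evolution \interleaved{\elementString{a}}{\elementString{s}}$. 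No separate induction on the trajectory length is needed, since Theorem~\ref{thm.pureFiltering} is already stated for trajectories of arbitrary length $n$.

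I do not anticipate a genuine obstacle; the work is purely in lining up the hypotheses of the two cited results. The only points requiring care are bookkeeping ones: verifying $\successTruncation(\environment) \colon \singleSuccess$ so that Theorem~\ref{thm.pureFiltering} is applicable at all, and ensuring the ``closed under trajectory splicing'' hypothesis that Theorem~\ref{thm.pureFiltering} imposes on $\constrainedTransducers$ is in force here as well (it is the standing assumption on $\constrainedTransducers$ throughout this subsection, and it is preserved under evolution, so $\constrainedTransducers \evolution \interleaved{\elementString{s}}{\elementString{a}}$ inherits it too). With those noted, everything reduces to a verbatim appeal to Lemma~\ref{lem.successRespectsTruncation} and Theorem~\ref{thm.pureFiltering}.
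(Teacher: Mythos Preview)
Your proposal is correct and follows exactly the paper's own route: invoke Lemma~\ref{lem.successRespectsTruncation} to transfer $\constrainedTransducers$-optimality from $\environment$ to $\successTruncation(\environment)$, observe that $\successTruncation(\environment)\in\singleSuccess$, and then apply Theorem~\ref{thm.pureFiltering}. Your extra remarks about the closed-under-trajectory-splicing hypothesis and the validity of the evolutions are appropriate bookkeeping that the paper leaves implicit.
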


In particular, this means that an optimal agent can always be seen as optimal
with respect to a single success environment, and performing sensorimotor-only
(i.e. not value-laden) filtering there.

\section{Specifiability}
\label{sec.specifiability}

We will now explore the idea of policies being \emph{specified} by teleo-environments, where we say that a teleo-environment specifies a policy if the policy is uniquely optimal for the given teleo-environment, (possibly within some constrained class of transducers).

A main theme of the section is that within the set of all transducers, the specifiable transducers are exactly the deterministic transducers.
While this result is in keeping with what one might expect from classical
decision theory, it is perhaps counter-intuitive from a cognitive science
perspective, since it suggests that there is no special detectable feature of
those systems whose behaviour are optimal for some sensor-motor task, compared
to those that are not, besides merely being deterministic.

We prove this by showing that, for any given deterministic policy, a teleo-environment can be constructed for which that policy is uniquely optimal.
The way this environment is constructed is rather simple: the agent must behave in exactly the given way, otherwise it will enter the `doom' environment in which success never occurs.
It is of note that while such a simple specifying teleo-environment exists for any deterministic policy, there may be other, much less trivial, environments that also specify the same behaviour.

The equivalence between specifiable transducers and deterministic systems does not hold in every constrained class.
For this reason much of the current section is taken up with providing criteria for constrained classes of transducers within which it does happen.
In Section \ref{sec.absentMinded}
we discuss an example of when it does not happen, which is a version of the famous `absent-minded driver' problem in decision theory.

We begin with the definition of specification:

\begin{definition}[Teleo-specification]
  \label{def.teleoSpecification}
  Let $\constrainedTransducers \subseteq \policies$ be a constrained set of policies. We will say that a teleo-environment $\environment \colon \environments$ \emph{$\constrainedTransducers$-teleo-specifies} a policy $\policy \colon \constrainedTransducers$ iff
  \begin{equation}
    \forall_{\policy' \colon \constrainedTransducers} \successProbability\left( \policy', \environment \right) \geq \successProbability\left( \policy, \environment \right) \implies \policy' = \policy.
  \end{equation}
  In other words $\policy$ is \emph{uniquely} optimal among all the policies in $\constrainedTransducers$. As before we will usually omit the `$\constrainedTransducers$-'.
\end{definition}
We will investigate what conditions are sufficient and necessary for specifiability of policies, that is whether there is an environment that specifies a given policy within its class.

\subsection{Specifying environments}
\label{ssec.specifyingEnv}

Before we investigate properties of specifiable policies, we will present two necessary conditions for an environment to specify a policy among all policies (i.e $\policies$-teleo-specify it). While these conditions are not even close to sufficient, they inform how we approach constructing multiple examples in the following text, so should help the reader to build better intuitions.

In this section we additionally assume that the action space $\actionSpace$ contains at least two elements. Otherwise there is only one policy, and any environment teleo-specifies it. Due to this trivializing nature this could be the assumption throughout the whole paper, but it turns out all other theorems work without it, even if their statement becomes somewhat vacuous.
\begin{proposition}[Uncertain success]
  \label{prop.uncertainSuccess}
  Let $\environment \colon \environments$ teleo-specify the policy $\policy \colon \policies$. For any valid evolution $\left( \star : \left( \elementString{s}:\elementString{t}:\elementString{a} \right) \right) \in \left( \left\{ \star \right\} \times \stateSpace \times \telosSpace \times \actionSpace \right)^{n}$ of the coupled system $\coupling\left( \policy, \environment \right)$, the evolution $\left( \star : \left( \elementString{s}: \noSuccess^n: \elementString{a} \right) \right)$ is also valid.

  \begin{proof}
    Fix an arbitrary order on $\actionSpace$.	Let us define a function
    \begin{equation}
        \zeta \colon \left( \left( \stateSpace \times \actionSpace \right)^{*} \times \policies \right) \to \policies
    \end{equation}
    recursively as
    \begin{multline}
      \zeta\left( \interleaved{ \elementString{s}}{ \elementString{a} }, \policy \right) =\\ \left( \probability_{\policy}, \left( i, o \right) \mapsto \begin{cases}
        \zeta\left( \interleaved{ \elementString{s}_{>1}}{ \elementString{a}_{>1} }, \transition_{\policy}\left( \elementString{s}_{1}, \elementString{a}_{1} \right) \right) & \textrm{ if } \left( i, o \right) = \left( \elementString{s}_{1}, \elementString{a}_{1} \right),\\
        \transition_{\policy}\left( i, o \right) & \textrm{ otherwise}
      \end{cases}\right),
  \end{multline}
  and the recursion base
  \begin{equation}
    \zeta\left( \emptyString, \policy \right) = \left( a_{0}, \left( i, o \right) \mapsto \upsilon \right)
  \end{equation}
  where $a_{0}$ is the least element of $\actionSpace$ such that $\probability_{\policy}\left( a_{0} \right) \neq 1$ (we can choose such an element, because $\actionSpace$ has at least two elements), $\upsilon = \left( U, \left( i, o \right) \mapsto \upsilon \right)$, and $U$ is the uniform distribution on $\actionSpace$. Crucially $\zeta\left( \emptyString, \policy \right) \neq \policy$, because the former assigns probability $1$ to $a_0$, while the latter does not.

  If $\left( \elementString{s}, \elementString{a} \right)$ is a valid evolution of $\policy$, then $\zeta\left( \interleaved{ \elementString{s}}{ \elementString{a} }, \policy \right) \neq \policy$ -- the sequence of transitions which culminates in mapping with the empty string and thus a different transducer, is always within the support of the probability distribution.

  To prove the proposition assume to the contrary, that there exists such an evolution, for which the associated evolution without success is not valid. Since $\left( \star : \left( \elementString{s} : \elementString{t} : \elementString{a} \right) \right)$ is a valid evolution for the whole system, $\interleaved{\elementString{s}}{\elementString{a}}$ is a valid evolution for $\policy$. This means that $\policy' = \zeta\left( \interleaved{ \elementString{s}}{ \elementString{a} }, \policy \right) \neq \policy$, so it suffices to show that it has no lesser success probability to finish the proof.

  Consider equation \eqref{eq.successBellman}. Since $\zeta$ does not change the probabilities for the first $n$ steps and $\successProbability_{n}$ only depends on these steps, $\successProbability_{n}\left( \policy, \environment \right) = \successProbability_{n}\left( \policy', \environment \right)$. In the second sum, the only component influenced by $\zeta$ is the one related to the evolution $\interleaved{\elementString{s}}{\elementString{a}}$, but in this component $\failureProbability_{\coupling\left( \policy, \environment \right)}\left( \elementString{s}, \elementString{a} \right) = 0$, because that is the probability of exactly the environment trajectory $\interleaved{\elementString{a}}{\interleaved{\elementString{s}}{\noSuccess^{n}}}$, which is not a valid evolution, so the probability of it has to be $0$. Thus $\successProbability\left( \policy, \environment \right) = \successProbability\left( \policy', \environment \right)$, ending the proof.
\end{proof}
\end{proposition}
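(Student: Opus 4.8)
The plan is to argue by contradiction: assuming the conclusion fails, I would manufacture a policy distinct from $\policy$ but with exactly the same success probability in $\environment$, which contradicts the uniqueness built into teleo-specification. So suppose there is a valid evolution $\left( \star : \left( \elementString{s} : \elementString{t} : \elementString{a} \right) \right)$ of $\coupling\left( \policy, \environment \right)$ whose all-$\noSuccess$ counterpart $\left( \star : \left( \elementString{s} : \noSuccess^{n} : \elementString{a} \right) \right)$ is not valid. Since the coupled evolution is valid, its policy projection $\interleaved{ \elementString{s} }{ \elementString{a} }$ is in particular a valid trajectory for $\policy$ on its own, and the environment projection is $\interleaved{ \elementString{a} }{ \interleaved{ \elementString{s} }{ \elementString{t} } }$.

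The key numerical fact I would extract first is that the term of the Bellman-style expansion~\eqref{eq.successBellman}, taken at level $n$, indexed by exactly this trajectory carries zero weight. Indeed, $\failureProbability_{\coupling\left( \policy, \environment \right)}\left( \elementString{s}, \elementString{a} \right)$ is a product of environment transition probabilities along $\interleaved{ \elementString{a} }{ \interleaved{ \elementString{s} }{ \noSuccess^{n} } }$ (the policy trajectory $\interleaved{ \elementString{s} }{ \elementString{a} }$ being valid, so the first case of its definition applies); since that environment evolution is by assumption invalid, one of those factors is zero, so $\failureProbability_{\coupling\left( \policy, \environment \right)}\left( \elementString{s}, \elementString{a} \right) = 0$.

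Next I would build the witness policy $\policy'$. It must coincide with $\policy$ throughout the first $n$ steps — the same output distribution and transition for every prefix of length at most $n$ — but, after traversing the distinguished trajectory $\interleaved{ \elementString{s} }{ \elementString{a} }$, it must evolve into something other than $\policy \evolution \interleaved{ \elementString{s} }{ \elementString{a} }$. Since $\actionSpace$ has at least two elements, there is an action $a_{0}$ with $\probability_{\policy \evolution \interleaved{ \elementString{s} }{ \elementString{a} }}\left( a_{0} \right) \neq 1$; taking $\policy' \evolution \interleaved{ \elementString{s} }{ \elementString{a} }$ to be any transducer whose first output distribution is the point mass at $a_{0}$ forces $\policy' \evolution \interleaved{ \elementString{s} }{ \elementString{a} } \neq \policy \evolution \interleaved{ \elementString{s} }{ \elementString{a} }$, and since evolution by a fixed valid trajectory is a function, this forces $\policy' \neq \policy$. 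Concretely, $\policy'$ is defined by recursion on the length of $\interleaved{ \elementString{s} }{ \elementString{a} }$, copying $\policy$'s transition function at every step except that, once the whole distinguished trajectory has been traversed, the modified transducer is installed; off the trajectory, nothing changes.

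Finally I would compare $\successProbability\left( \policy, \environment \right)$ and $\successProbability\left( \policy', \environment \right)$ via~\eqref{eq.successBellman} at level $n$: the prefix term $\successProbability_{n}\left( \cdot, \environment \right)$ depends only on the first $n$ steps of the policy and is therefore unchanged; in the double sum over length-$n$ trajectories, every summand for a trajectory other than $\interleaved{ \elementString{s} }{ \elementString{a} }$ agrees for $\policy$ and $\policy'$ (they coincide off the distinguished trajectory, and the weight $\failureProbability$ depends only on $\environment$ and on validity of the shared first $n$ steps), while the summand for $\interleaved{ \elementString{s} }{ \elementString{a} }$ is multiplied by $\failureProbability_{\coupling\left( \policy, \environment \right)}\left( \elementString{s}, \elementString{a} \right) = 0$, independently of how $\policy'$ behaves thereafter. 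Hence $\successProbability\left( \policy', \environment \right) = \successProbability\left( \policy, \environment \right)$, so $\policy'$ is also optimal for $\environment$, contradicting teleo-specification. The step I expect to be the main obstacle is making the recursive construction of $\policy'$ airtight: one must check that it really is a policy, that it genuinely differs from $\policy$ (this is where $|\actionSpace| \geq 2$ enters, together with evolution being a function), and that it matches $\policy$ on all length-$n$ behaviour so that only the already-zero term of~\eqref{eq.successBellman} is disturbed; the remaining arithmetic is a direct substitution into~\eqref{eq.successBellman}.
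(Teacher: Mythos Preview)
Your proposal is correct and follows essentially the same route as the paper: argue by contradiction, splice a different transducer into $\policy$ at the end of the distinguished trajectory (using $|\actionSpace|\ge 2$ to guarantee a genuinely different choice), and then invoke equation~\eqref{eq.successBellman} to see that only the zero-weight term is affected, contradicting uniqueness. The paper's proof differs only in presentation, packaging your recursive construction of $\policy'$ into an explicitly defined function $\zeta$; your identification of this construction as the step needing most care matches the paper's emphasis.
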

This proposition can be interpreted as the environment ensuring that the teleo-specified agent cannot ever be certain of its success -- intuitively (and in fact also in the proof) in such a situation the agent could behave arbitrarily, contradicting uniqueness. This is clearly a consequence of teleo-specification only caring whether a success occurs at least once.
\begin{proposition}[Everything is possible]
  \label{prop.everythingPossible}
  Let $\environment \colon \environments$ teleo-specify the policy $\policy \colon \policies$. For a valid evolution $\left( \star : \left( \elementString{s}:\elementString{t}:\elementString{a} \right) \right) \in \left( \left\{ \star \right\} \times \stateSpace \times \telosSpace \times \actionSpace \right)^{n}$ of the coupled system $\coupling\left( \policy, \environment \right)$, the associated probability $\probability_{\environment\evolution \interleaved{ \elementString{a}}{ \interleaved{ \elementString{s}}{ \elementString{t} } }}\left( s \right) \neq 0$ for any $s \in \stateSpace$.

  \begin{proof}
    We proceed almost identically as in the previous proof. Assume that there is such an $s \in \stateSpace$ that $\probability_{\environment \evolution \interleaved{ \elementString{a}}{ \interleaved{ \elementString{s}}{ \elementString{t} } }}\left( s \right) = 0$. Also pick an arbitrary $a \in \actionSpace$, such that $\probability_{\policy \evolution \interleaved{ \elementString{s}}{ \elementString{a} }}\left( a \right) \neq 0$. Let $\elementString{s}' = \elementString{s} \cdot s$ and $\elementString{a}' = \elementString{a} \cdot a$. By the choice of $a$, $\policy' = \zeta\left( \interleaved{ \elementString{s}'}{ \elementString{a}' }, \policy \right) \neq \policy$. The rest of the reasoning is exactly as in the previous proof, only the fact that $\failureProbability_{\coupling\left( \policy, \environment \right)}\left( \elementString{s}', \elementString{a}' \right) = 0$ stems from the probability of $s$ being zero, instead of the probability of the failures being zero.
  \end{proof}
\end{proposition}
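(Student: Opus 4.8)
The plan is to argue by contradiction, in close parallel with the proof of Proposition~\ref{prop.uncertainSuccess}, re-using the `deviation' function $\zeta$ constructed there together with the expansion~\eqref{eq.successBellman}. The underlying idea is the same: if $\environment$ assigns zero probability to a given state being emitted after some history, then the policy's behaviour after that (impossible) continuation is completely unconstrained, so we can perturb $\policy$ there into a genuinely different policy without changing its success probability, contradicting the uniqueness clause of teleo-specification.

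Concretely, suppose for contradiction that $\left( \star : \left( \elementString{s}:\elementString{t}:\elementString{a} \right) \right)$ is a valid evolution of $\coupling\left( \policy, \environment \right)$ and that $\probability_{\environment\evolution\interleaved{ \elementString{a}}{ \interleaved{ \elementString{s}}{ \elementString{t} } }}\left( \bar{s} \right) = 0$ for some state $\bar{s} \in \stateSpace$. Since the coupled evolution is valid, $\interleaved{ \elementString{s}}{ \elementString{a} }$ is in particular a valid evolution of $\policy$; pick any action $\bar{a} \in \actionSpace$ with $\probability_{\policy\evolution\interleaved{ \elementString{s}}{ \elementString{a} }}\left( \bar{a} \right) \neq 0$, and form the length-$(n+1)$ trajectory $\elementString{s}' = \elementString{s}\cdot\bar{s}$, $\elementString{a}' = \elementString{a}\cdot\bar{a}$. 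Because a policy accepts every state as an input and $\bar{a}$ lies in the relevant support, $\interleaved{ \elementString{s}'}{ \elementString{a}' }$ is still a valid evolution of $\policy$, so the policy $\policy' = \zeta\left( \interleaved{ \elementString{s}'}{ \elementString{a}' }, \policy \right)$ is well-defined; and by the same observation used in the previous proof (the recursion of $\zeta$ bottoms out in a transducer putting probability $1$ on a fixed action, while the corresponding evolution of $\policy$ does not), $\policy' \neq \policy$.

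It then remains to check $\successProbability\left( \policy', \environment \right) = \successProbability\left( \policy, \environment \right)$, which contradicts teleo-specification. I would apply~\eqref{eq.successBellman} at step $n+1$: the term $\successProbability_{n+1}$ agrees for $\policy$ and $\policy'$ because $\zeta$ leaves the output distributions of the first $n+1$ steps untouched, and in the remaining sum $\policy$ and $\policy'$ differ only in the single summand indexed by $\left( \elementString{s}', \elementString{a}' \right)$; the coefficient of that summand, $\failureProbability_{\coupling\left( \policy, \environment \right)}\left( \elementString{s}', \elementString{a}' \right)$, has as a factor the environment's probability of emitting the state $\bar{s}$ after the relevant history, and this factor is $0$, so the perturbed summand contributes nothing for either policy. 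The step I expect to be most delicate is precisely matching this vanishing factor with the hypothesis: $\failureProbability$ is defined via the environment evolved along the all-$\noSuccess$ telos history rather than along the actual $\elementString{t}$, so to make the hypothesis bite one should first invoke Proposition~\ref{prop.uncertainSuccess} to pass to the all-$\noSuccess$ version of the trajectory; everything else is a routine re-run of the computation in the previous proof.
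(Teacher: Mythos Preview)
Your argument is essentially the paper's own: assume some state $\bar s$ has probability zero after the given history, extend the trajectory by $(\bar s,\bar a)$ with $\bar a$ in the current support of $\policy$, apply the deviation map $\zeta$ to obtain $\policy'\neq\policy$, and then use the expansion~\eqref{eq.successBellman} at step $n+1$ to see that the only affected summand carries the vanishing coefficient $\failureProbability_{\coupling(\policy,\environment)}(\elementString{s}',\elementString{a}')$. The paper's proof is exactly this, phrased as ``the rest of the reasoning is exactly as in the previous proof''.

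You go further than the paper in flagging the mismatch between the hypothesis (which concerns $\environment$ evolved along the actual telos string $\elementString{t}$) and the definition of $\failureProbability$ (which uses the all-$\noSuccess$ evolution). That observation is well taken: the paper simply asserts that $\failureProbability=0$ ``stems from the probability of $s$ being zero'' without commenting on this point. However, your proposed remedy---invoke Proposition~\ref{prop.uncertainSuccess} to ``pass to the all-$\noSuccess$ version''---does not by itself close the gap. That proposition only guarantees that the all-$\noSuccess$ trajectory is \emph{valid}; it does not tell you that $\probability_{\environment\evolution\interleaved{\elementString{a}}{\interleaved{\elementString{s}}{\noSuccess^n}}}(\bar s)=0$ follows from $\probability_{\environment\evolution\interleaved{\elementString{a}}{\interleaved{\elementString{s}}{\elementString{t}}}}(\bar s)=0$, and in general the two evolved environments differ once $\elementString{t}\neq\noSuccess^n$. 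So the argument, as written (both yours and the paper's), cleanly establishes the claim for $\elementString{t}=\noSuccess^n$, while the general-$\elementString{t}$ case needs an additional step that neither spells out.
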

This proposition can be interpreted as the environment checking the behaviour of the agent in any possible situation -- intuitively the agent could behave arbitrarily conditioning on a sensory input that is impossible, making it not unique.

Note that the environment has to behave this way only on the optimal trajectory -- whenever an agent behaves suboptimally, the environment is not constrained in this way.
\subsection{Specifiable are deterministic}
\label{sec.specifiableAreDeterministic}

We first show that any policy that can be decomposed into a mixture, cannot be specifiable.
\begin{definition}[Nontrivially decomposable]
  We say a constrained policy $\policy \colon \constrainedTransducers \subseteq \policies$ is nontrivially decomposable if there are policies $\policy_{k} \colon \constrainedTransducers \subseteq \policies$ such that $\policy_k\ne \policy$ and numbers $\alpha_{k} \geq 0$ such that $\policy = \mixture{\alpha_}{\policy_}$, and there exists $k_0 \in \N$, such that $\alpha_{k_{0}} \neq 0$.
\end{definition}
\begin{lemma}[Decomposable cannot be specified]
  \label{lem.noSpecifyingDecomposable}
  Consider a constrained policy $\policy \colon \constrainedTransducers \subseteq \policies$ that is nontrivially decomposable. Then $\policy$ is not $\constrainedTransducers$-specifiable.

  \begin{proof}
    We will show there is another policy with no less of a chance of success than $\policy$. By Lemma \ref{lem.successOfMixtures} we can write
    \begin{equation}
      \successProbability\left( \policy, \environment \right) = \mixture{\alpha_}{\politicalSuccessProbability}.
    \end{equation}
    Since the right hand side is a mean of success probabilities, at least one of them is greater or equal to the success probability of $\policy$. It also cannot be equal to $\policy$ by assumption. This contradicts the uniqueness condition for specifiable policies, finishing the proof.
  \end{proof}
\end{lemma}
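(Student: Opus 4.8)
The plan is to prove the contrapositive of specifiability directly: I will show that for \emph{every} teleo-environment $\environment \colon \environments$ there is a policy in $\constrainedTransducers$ that is distinct from $\policy$ yet achieves at least as high a success probability against $\environment$, so that $\environment$ cannot $\constrainedTransducers$-teleo-specify $\policy$ in the sense of Definition \ref{def.teleoSpecification}; since $\environment$ is arbitrary, this means $\policy$ is not $\constrainedTransducers$-specifiable.

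First I would fix a nontrivial decomposition $\policy = \sum_{k}\alpha_{k}\policy_{k}$, so that each $\policy_{k} \colon \constrainedTransducers$ satisfies $\policy_{k} \neq \policy$, the weights satisfy $\alpha_{k} \geq 0$ and $\sum_{k}\alpha_{k} = 1$, and $\alpha_{k_{0}} > 0$ for some $k_{0}$. Given any $\environment$, I would invoke Lemma \ref{lem.successOfMixtures} — using it with the trivial one-term mixture in the environment slot — to obtain $\successProbability\left( \policy, \environment \right) = \sum_{k}\alpha_{k}\successProbability\left( \policy_{k}, \environment \right)$. This exhibits $\successProbability\left( \policy, \environment \right)$ as a convex combination of the numbers $\successProbability\left( \policy_{k}, \environment \right)$, and I would argue that such a combination cannot be strictly larger than all of its positively-weighted terms: if $\successProbability\left( \policy_{k}, \environment \right) < \successProbability\left( \policy, \environment \right)$ held for every $k$ with $\alpha_{k} > 0$, then comparing the nonnegative series $\sum_{k}\alpha_{k}\successProbability\left( \policy_{k}, \environment \right)$ and $\sum_{k}\alpha_{k}\successProbability\left( \policy, \environment \right)$ term by term, with a strict gap at the index $k_{0}$, would give $\successProbability\left( \policy, \environment \right) < \successProbability\left( \policy, \environment \right)\sum_{k}\alpha_{k} = \successProbability\left( \policy, \environment \right)$, a contradiction. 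Hence some index $k^{*}$ has $\alpha_{k^{*}} > 0$ and $\successProbability\left( \policy_{k^{*}}, \environment \right) \geq \successProbability\left( \policy, \environment \right)$; since $\policy_{k^{*}} \colon \constrainedTransducers$ and $\policy_{k^{*}} \neq \policy$, this is precisely a witness that $\environment$ fails to teleo-specify $\policy$.

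I do not expect any serious obstacle here. The only point that needs a little care is the countably infinite mixture: the supremum of the $\successProbability\left( \policy_{k}, \environment \right)$ need not be attained, so rather than trying to pick a maximising $\policy_{k}$ I would run the strict-inequality computation above, propagating strictness at the single positively-weighted index $k_{0}$ to the sum of a nonnegative series, which needs no compactness or attainment input. Everything else is simply Lemma \ref{lem.successOfMixtures} together with unfolding Definition \ref{def.teleoSpecification}.
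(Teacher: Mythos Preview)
Your proposal is correct and follows essentially the same approach as the paper's proof: write $\successProbability(\policy,\environment)$ as the convex combination $\sum_k \alpha_k \successProbability(\policy_k,\environment)$ via Lemma~\ref{lem.successOfMixtures}, then pick a component with at least this value. Your version is in fact slightly more careful than the paper's, since you explicitly handle the countably infinite case by propagating a strict inequality at a positively-weighted index rather than tacitly assuming a maximising term exists.
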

This result means that if we can show that some transducers within a class can be nontrivially decomposed, then they cannot be specifiable within that class. In particular, deterministic transducers obviously cannot be decomposed, which suggests the following theorem.
\begin{theorem}[Specifiable policies are deterministic]
  \label{thm.specifiableAreDeterministic}
  Consider a class of policies $\constrainedTransducers \subseteq \policies$ in which all nondeterministic transducers are nontrivially decomposable. Then only deterministic policies can be $\constrainedTransducers$-specifiable.

  \begin{proof}
    Follows immediately from Lemma \ref{lem.noSpecifyingDecomposable}.
  \end{proof}
\end{theorem}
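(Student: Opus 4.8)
The plan is to read the statement off directly from Lemma~\ref{lem.noSpecifyingDecomposable} by contraposition, so the only real work is unwinding the definitions. Fix a policy $\policy \colon \constrainedTransducers$ and suppose it is $\constrainedTransducers$-specifiable, meaning there is a teleo-environment $\environment \colon \environments$ for which $\policy$ is uniquely $\constrainedTransducers$-optimal in the sense of Definition~\ref{def.teleoSpecification}. I want to conclude that $\policy$ is deterministic in the sense of Example~\ref{ex.deterministic}. Suppose, for contradiction, that $\policy$ is \emph{not} deterministic. By the hypothesis on $\constrainedTransducers$ --- that every nondeterministic transducer in $\constrainedTransducers$ is nontrivially decomposable --- we may write $\policy = \mixture{\alpha_}{\policy_}$ with each $\policy_k \colon \constrainedTransducers$, each $\policy_k \neq \policy$, and some $\alpha_{k_0} \neq 0$. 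Lemma~\ref{lem.noSpecifyingDecomposable} then says that such a $\policy$ is not $\constrainedTransducers$-specifiable, contradicting our assumption. Hence $\policy$ is deterministic.

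It is worth noting where the content actually sits, since the theorem itself is essentially a packaging of Lemma~\ref{lem.noSpecifyingDecomposable}. That lemma in turn rests on Lemma~\ref{lem.successOfMixtures} (success probabilities respect mixtures in the policy argument): for any $\environment$ one has $\successProbability(\policy, \environment) = \sum_k \alpha_k \successProbability(\policy_k, \environment)$, a convex combination, so at least one index $k$ with $\alpha_k > 0$ must satisfy $\successProbability(\policy_k, \environment) \geq \successProbability(\policy, \environment)$; taking $k$ among those with $\alpha_k \ne 0$, and using $\policy_k \ne \policy$, this violates the uniqueness clause of Definition~\ref{def.teleoSpecification}. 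The theorem's proof does not need to reprove any of this --- it only needs to invoke the lemma.

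There is no substantive obstacle here; the one point to be careful about is purely definitional, namely that ``nondeterministic'' in the theorem statement is read as ``not a deterministic transducer'' as defined in Example~\ref{ex.deterministic}, and that the hypothesis on $\constrainedTransducers$ is precisely the bridge that upgrades ``nontrivially decomposable policies are not specifiable'' to ``only deterministic policies are specifiable''. (The converse direction --- that within a class containing all deterministic transducers every deterministic policy \emph{is} specifiable --- is a separate matter, handled elsewhere via an explicit doom-based construction, and is not part of this statement.)
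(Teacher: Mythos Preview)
Your proof is correct and takes essentially the same approach as the paper, which simply states that the result ``follows immediately from Lemma~\ref{lem.noSpecifyingDecomposable}.'' You have just spelled out the contraposition and the definitional bookkeeping explicitly, which is entirely appropriate.
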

However, identifying which transducers can be nontrivially decomposed is not a simple task in general, so it's not obvious which classes of transducers this theorem refers to. In particular it's not \emph{a priori} obvious that all nondeterministic transducers can be nontrivially decomposed in the total class of transducers.
We will now show that this is the case by constructing an explicit decomposition of any nondeterministic transducer (Proposition~\ref{prop.pointwiseDecomposable} below).
\begin{definition}[Pointwise differing transducers]
  \label{def.pointwiseDifferent}
  We say that two transducers $\transducer, \transducer' \colon \genericTransducers$ \emph{differ pointwise} if $\transducer \neq \transducer'$, but there is an $n \in \N$ and a trajectory $\left( \elementString{i}, \elementString{o} \right) \in \inputSpace^{n} \times \outputSpace^{n}$ such that for any $\left( \elementString{j}, \elementString{q} \right) \in \inputSpace^{*} \times \outputSpace^{*} \setminus \left\{ \left( \elementString{i}, \elementString{o} \right) \right\}$ that are valid evolutions for both $\transducer$ and $\transducer'$ we have
  \begin{equation}
    \probability_{\transducer \evolution \interleaved{ \elementString{j}}{ \elementString{q} }} = \probability_{\transducer' \evolution \interleaved{ \elementString{j}}{ \elementString{q} }},
  \end{equation}
  that is they differ only in the probability returned after the single trajectory, in all other cases they are equal.
\end{definition}
Because of the interplay between valid evolutions and associated probabilities, the equality actually works for all valid evolutions of \emph{one} of the transducers that do not have $\left( \elementString{i}, \elementString{o} \right)$ as a prefix.
\newcommand{\twoMixture}[3]{#1 #2 + \left( 1 - #1 \right) #3}
\begin{definition}[Pointwise decomposition]
  \label{def.pointwiseDecomposition}
  We say two pointwise differing transducers $\transducer', \transducer'' \colon \genericTransducers$ and a number $\alpha \in \left( 0, 1 \right)$ constitute a \emph{pointwise decomposition} of $\transducer = \twoMixture{\alpha}{\transducer'}{\transducer''}$.
\end{definition}
An arbitrary nondeterministic transducer is pointwise decomposable within the total class of transducers.
\begin{proposition}[Nondeterministic are pointwise decomposable]
  \label{prop.pointwiseDecomposable}
  We can construct a decomposition as in Definition
  \ref{def.pointwiseDecomposition} for any nondeterministic transducer $\transducer \colon \genericTransducers \setminus \deterministicTransducers$.

  \begin{proof}
    First assume that the probability distribution $\probability_{\transducer}$ associated with the transducer is not a point distribution. We will prove the general case later.

    In this case the probability distribution admits a nontrivial decomposition itself, say of the form
    \begin{equation}
      \probability_{\transducer} = \twoMixture{\alpha}{p}{q}.
    \end{equation}
    Without loss of generality we assume $p$ and $q$ have the same support as $\probability_{\transducer}$.
    Now we can define a pointwise decomposition of the transducer by just using this distribution decomposition and a trivial decomposition on the transition. Thus, the transducers shall be
    \begin{equation}
      \transducer' = \left( p, \transition_{\transducer} \right)
    \end{equation}
    and
    \begin{equation}
      \transducer'' = \left( q, \transition_{\transducer} \right).
    \end{equation}
    The decomposition then takes the shape of
    \begin{equation}
      \transducer = \twoMixture{\alpha}{\transducer'}{\transducer''}.
    \end{equation}
    This equality holds on the mix of probabilities because of their definition and on the transition because the transition functions are equal, so the terms with the normalization in the definition cancel out. It's trivial to check that the transducers differ pointwise for the empty trajectory.

    For the general case note that a nondeterministic transducer has to have a valid evolution $\left( \elementString{i}, \elementString{o} \right) \in \left( \inputSpace \times \outputSpace \right)^{n}$, the result of which $\transducer_0 = \transducer \evolution \interleaved{ \elementString{i}}{ \elementString{o} }$ has a nondeterministic associated distribution. By the previous step we know that $\transducer_0$ pointwise decomposes, so it would be sufficient to prove that if an arbitrary evolution of a transducer pointwise decomposes, then so does the transducer. We will do that coinductively.

    By a similar argument as in the proof of Theorem \ref{thm.filtering} it is sufficient to prove this for the case of $n = 1$ and the rest follows by induction. Let us then consider $\transducer_0 = \transducer \evolution \left( i, o \right)$ such that
    \begin{equation}
      \transducer_0 = \twoMixture{\alpha}{\transducer_0'}{\transducer_0''}
    \end{equation}
    is a pointwise decomposition. We construct a decomposition of the original transducer into
    \begin{equation}
      \transducer' = \left( \probability_{\transducer}, \left( j, q \right) \mapsto \begin{cases}
        \transducer_0' & \textrm{ if } \left( j, q \right) = \left( i, o \right),\\
        \transition_{\transducer}\left( j, q \right) & \textrm{ otherwise}
      \end{cases} \right),
  \end{equation}
  and
  \begin{equation}
    \transducer'' = \left( \probability_{\transducer}, \left( j, q \right) \mapsto \begin{cases}
      \transducer_0'' & \textrm{ if } \left( j, q \right) = \left( i, o \right),\\
      \transition_{\transducer}\left( j, q \right) & \textrm{ otherwise}
    \end{cases} \right).
\end{equation}
The decomposition is then
\begin{equation}
  \transducer = \twoMixture{\alpha}{\transducer'}{\transducer''},
\end{equation}
which adds up correctly because the associated probabilities are identical, as are all transitions other than by $\left( i, o \right)$, and that last one is
\begin{equation}
\begin{split}
  &\frac{\twoMixture{\alpha}{\probability_{\transducer}\left( o \right)\transition_{\transducer'}\left( i, o \right)}{\probability_{\transducer}\left( o \right)\transition_{\transducer''}\left( i, o \right)}}{\twoMixture{\alpha}{\probability_{\transducer}\left( o \right)}{\probability_{\transducer}\left( o \right)}} \\
  &\qquad=   \twoMixture{\alpha}{\transducer_{0}'}{\transducer_{0}''} = \transducer_{0},
\end{split}
\end{equation}
where the first equality is by canceling out the equal probabilities and applying the transition defined above.
This is exactly the transition we would expect.

It is pointwise, because the decomposition of $\transducer_0$ is pointwise by assumption, and any other associated probabilities introduced by this construction are equal between $\transducer'$ and $\transducer''$.
\end{proof}
\end{proposition}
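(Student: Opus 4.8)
The plan is to build the decomposition explicitly, in two stages: first when the transducer's own output distribution is already non-degenerate, then for a general nondeterministic transducer by pushing the non-degeneracy outward.

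First I would handle the case where $\probability_{\transducer}$ is not a point distribution. Then $\probability_{\transducer}$ itself admits a nontrivial convex decomposition $\probability_{\transducer} = \alpha p + (1-\alpha)q$ with $\alpha \in (0,1)$, and after a harmless adjustment I can take $p$ and $q$ to have the same support as $\probability_{\transducer}$. Set $\transducer' = \left( p, \transition_{\transducer} \right)$ and $\transducer'' = \left( q, \transition_{\transducer} \right)$, reusing the original transition function verbatim in both. To see that $\transducer = \alpha\transducer' + (1-\alpha)\transducer''$ in the sense of Definition~\ref{def.transMixture}, the first component is immediate, and for the transition one checks that the Bayesian reweighting in Equation~\eqref{eq.mixtureUpdate} is trivial here: since $\transition_{\transducer'} = \transition_{\transducer''} = \transition_{\transducer}$, both branches land on $\transition_{\transducer}(i,o)$ and the normalising coefficients cancel. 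The pair $(\transducer', \transducer'')$ differs pointwise with witness the empty trajectory, since $\probability_{\transducer'} = p \neq q = \probability_{\transducer''}$ while after any nonempty valid evolution both have passed through the common transition function and therefore agree.

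For the general case I would first argue that any nondeterministic transducer has a valid finite evolution $\interleaved{\elementString{i}}{\elementString{o}} \in (\inputSpace\times\outputSpace)^n$ after which the output distribution is non-degenerate. This is essentially the statement that the coinductively-defined property ``deterministic'' (Example~\ref{ex.deterministic}) has a complement witnessed at finite depth: the set of transducers all of whose finite evolutions carry a point distribution is easily seen by coinduction to be contained in $\deterministicTransducers$, so a transducer outside $\deterministicTransducers$ must have some finite evolution with a non-point distribution. Applying the first case to $\transducer_0 = \transducer \evolution \interleaved{\elementString{i}}{\elementString{o}}$ gives a pointwise decomposition $\transducer_0 = \alpha\transducer_0' + (1-\alpha)\transducer_0''$. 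It then remains to lift this: if an evolution of a transducer pointwise-decomposes, so does the transducer. As in the proof of Theorem~\ref{thm.filtering}, it suffices to do this for a single step and iterate. Given $\transducer_0 = \transducer \evolution (i,o)$ decomposed pointwise, define $\transducer'$ (resp.\ $\transducer''$) to have output distribution $\probability_{\transducer}$ and transition function equal to $\transition_{\transducer}$ except that $(i,o)$ is sent to $\transducer_0'$ (resp.\ $\transducer_0''$); then the mixture $\alpha\transducer' + (1-\alpha)\transducer''$ has output distribution $\probability_{\transducer}$, transition $\transition_{\transducer}(j,q)$ on pairs $(j,q)\neq(i,o)$ (the reweighting cancels because the two transitions coincide there), and on $(i,o)$ the reweighted mixture is $\alpha\transducer_0' + (1-\alpha)\transducer_0'' = \transducer_0 = \transition_{\transducer}(i,o)$, so the mixture is $\transducer$. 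Pointwise-ness is inherited: the only newly-introduced disagreement lies along $(i,o)$ followed by the witnessing trajectory of $\transducer_0', \transducer_0''$, and everything else is forced equal.

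The step I expect to be the main obstacle is the reduction at the start of the general case — convincing oneself that nondeterminism is ``observable'' after finitely many steps, i.e.\ that one cannot have a transducer nondeterministic only ``at infinity''. This is exactly where the coinductive characterisation of $\deterministicTransducers$ does the work, and it is worth stating that small coinduction cleanly rather than waving at it. The remaining verifications — that the mixture-update formula of Equation~\eqref{eq.mixtureUpdate} degenerates in the two places where we reuse a transition function, and the bookkeeping of which trajectory witnesses pointwise difference — are routine but need care.
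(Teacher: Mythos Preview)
Your proposal is correct and follows essentially the same approach as the paper: first decompose at the root when $\probability_{\transducer}$ is non-degenerate by splitting the output distribution and reusing $\transition_{\transducer}$, then in the general case locate a finite evolution with non-degenerate output, decompose there, and lift back one step at a time by modifying the transition only at the relevant $(i,o)$ pair. Your extra care in justifying that nondeterminism is witnessed at finite depth via the coinductive characterisation of $\deterministicTransducers$ is a point the paper leaves implicit, so that addition is welcome.
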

Among the constrained transducer classes we already introduced nondeterministic one-flip transducers are pointwise decomposable (we just need to decompose them arbitrarily at the flip, as the above construction implies), while nondeterministic UFS transducers (including i.i.d. transducers) are not (intuitively, decomposing a maximally complex unifilar $n$ state transducer $\transducer \colon \unifilarFiniteStateTransducers{n} \setminus \unifilarFiniteStateTransducers{n-1}$ would require adding a special memory state for the point of decomposition).

Proposition \ref{prop.pointwiseDecomposable} together with Theorem \ref{thm.specifiableAreDeterministic} implies that only deterministic transducers can be specifiable among unconstrained transducers.

\subsection{Deterministic are specifiable}
\label{sec.deterministicAreSpecifiable}

In this section we show that every deterministic policy is specifiable by explicitly constructing an environment that specifies it. We use a very similar technique as in Example~\ref{ex.noUfsFiltering}, in which we showed that UFS transducers do not satisfy filtering.
\newcommand{\testingEnvironment}[1]{\environment_{\mathrm{test}}\left( #1 \right)}
\begin{definition}[Uniform testing environment]
  \label{def.testingEnvironment}
  Given a deterministic policy $\policy \colon \deterministicTransducers
  \subset \policies$ we define its \emph{uniform testing environment} as
  \begin{multline}
    \testingEnvironment{\policy} =\\
    \left( \frac{1}{4} \nothingDistribution + \frac{3}{4} \successDistribution, \left( a, \left( s, t \right) \right) \mapsto \begin{cases}
      \testingEnvironment{\policy \evolution \left( s, a \right)} & \textrm{ if the evolution is valid,}\\
      \doomEnvironment & \textrm{ otherwise}
    \end{cases} \right),
\end{multline}
where $\nothingDistribution$ and $\successDistribution$ are the uniform nothing distribution and the uniform success distribution (Definition~\ref{def.successDistribution}) and $\doomEnvironment$ is the doom environment (Definition~\ref{def.doomAndDespair}).
For deterministic policies that evolution is valid iff $a$ is the action the policy takes at this step.
\end{definition}
In other words, this environment returns a random state and, as long as the policy behaves like $\policy$, has a $\frac{3}{4}$ chance of returning a success at every step. The exact value of this probability is irrelevant as long as it differs from zero or one; we picked the value $\frac{3}{4}$ to make a future example simpler.
\begin{theorem}[Deterministic policies are specifiable]
  \label{thm.deterministicAreSpecifiable}
  Any deterministic policy $\policy \colon \deterministicTransducers \subset \policies$ is specifiable.

  \begin{proof}
    We will show that $\testingEnvironment{\policy}$ specifies $\policy$. Since this pair has a success chance of $\frac{3}{4}$ at every step we immediately get
    \begin{equation}
      \successProbability\left( \policy, \testingEnvironment{\policy}  \right) = 1.
    \end{equation}
    It suffices to show that any other policy $\policy' \colon \policies$ has a nonzero chance of failure. We will show this inductively with respect to the shortest evolution after which the associated probabilities of the two policies differ.

    If the length is $0$, then $\probability_{\policy} \neq \probability_{\policy'}$. Since $\probability_{\policy}$ is a point distribution, say focused on $a$, this means that there is an action $a'$, such that $\probability_{\policy'}\left( a' \right)$ is not zero. Since the first step results in success with probability $\frac{3}{4}$ and the doom environment never returns success the probability of failure is at least $\frac{1}{4}\probability_{\policy'}\left( a' \right)$, which is not zero.

    For the inductive step we can consider the first step $\left( s, a \right)$ of a $n+1$ length evolution leading to differing probabilities. By definition of the uniform test environment the probability of $(s, \noSuccess)$ in the first step is $\frac{1}{4n}$. The probability of $a$ is $1$, since the distributions have to be equal, because $n+1>0$. Since $\policy' \evolution \left( s, a \right)$ differs from $\policy \evolution \left( s, a \right)$ after an evolution of length $n$ by the inductive assumption it has a nonzero probability of failure $p$. Thus $\policy'$ has a failure probability of at least $\frac{p}{4n}$, which is clearly nonzero.
  \end{proof}
\end{theorem}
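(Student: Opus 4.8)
The plan is to exhibit, for an arbitrary deterministic policy $\policy \colon \deterministicTransducers \subset \policies$, a single teleo-environment that $\policies$-teleo-specifies it, namely the uniform testing environment $\testingEnvironment{\policy}$ of Definition~\ref{def.testingEnvironment}. Concretely I would show (i) $\successProbability(\policy, \testingEnvironment{\policy}) = 1$ and (ii) every policy $\policy' \colon \policies$ with $\policy' \ne \policy$ satisfies $\successProbability(\policy', \testingEnvironment{\policy}) < 1$, which together give unique optimality.

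Part (i) is the easy half. Since $\policy$ is deterministic, at each step it emits the unique action on which $\testingEnvironment{\cdot}$'s transition takes the ``valid'' branch, so the coupled system never reaches $\doomEnvironment$ and on every step the telos channel carries $\success$ with total probability $\tfrac34$, irrespective of which state is drawn. Hence the probability of never succeeding is at most $\lim_{n\to\infty}(\tfrac14)^{n} = 0$; formally one reads this off equation~\eqref{eq.successBellman}, noting that $\successProbability_{n}(\policy, \testingEnvironment{\policy}) \to 1$ as $n\to\infty$. So $\successProbability(\policy, \testingEnvironment{\policy}) = 1$.

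For part (ii), fix $\policy' \ne \policy$. By extensionality of transducers (two transducers are equal exactly when all of their finite-trajectory output distributions agree; cf.\ the unrolling of Section~\ref{sec.unrolling}), there is a least $n \in \N$ and a trajectory $\interleaved{\elementString{s}}{\elementString{a}} \in (\stateSpace\times\actionSpace)^{n}$, valid for both, with $\probability_{\policy \evolution \interleaved{\elementString{s}}{\elementString{a}}} \ne \probability_{\policy' \evolution \interleaved{\elementString{s}}{\elementString{a}}}$. I would induct on this $n$ to show $\policy'$ has positive failure probability against $\testingEnvironment{\policy}$. If $n=0$, then $\probability_{\policy'} \ne \probability_{\policy}$, and since $\probability_{\policy}$ is a point mass on some $a$ there is $a' \ne a$ with $\probability_{\policy'}(a') > 0$; the coupled first-step event ``telos $=\noSuccess$ and action $=a'$'' has probability $\tfrac14\,\probability_{\policy'}(a') > 0$ and sends the environment to $\doomEnvironment$ with no prior success, so failure occurs with probability at least $\tfrac14\,\probability_{\policy'}(a')$. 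If $n\ge 1$, minimality applied to the empty prefix forces $\probability_{\policy'} = \probability_{\policy}$, a point mass on some $a$, so necessarily $\elementString{a}_0 = a$; the suffix $\interleaved{\elementString{s}_{>0}}{\elementString{a}_{>0}}$ is then a witness of length $n-1$ for the evolved pair $\policy\evolution(\elementString{s}_0, a)$, $\policy'\evolution(\elementString{s}_0, a)$, and it is minimal for that pair (otherwise prepending $(\elementString{s}_0, a)$ would beat $n$), so the inductive hypothesis gives that $\policy'\evolution(\elementString{s}_0, a)$ has failure probability $p > 0$ against $\testingEnvironment{\policy\evolution(\elementString{s}_0, a)}$. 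Reading equation~\eqref{eq.success} at $\coupling(\policy', \testingEnvironment{\policy})$, the first step is $(\elementString{s}_0, \noSuccess)$ with action $a$ with probability $\tfrac{1}{4\lvert\stateSpace\rvert} > 0$, after which the environment is $\testingEnvironment{\policy\evolution(\elementString{s}_0, a)}$, so $\policy'$ fails against $\testingEnvironment{\policy}$ with probability at least $\tfrac{p}{4\lvert\stateSpace\rvert} > 0$. Either way $\successProbability(\policy', \testingEnvironment{\policy}) < 1 = \successProbability(\policy, \testingEnvironment{\policy})$, so $\testingEnvironment{\policy}$ specifies $\policy$.

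The main obstacle is the bookkeeping around the minimal distinguishing trajectory: establishing that a finite witness of $\policy \ne \policy'$ exists (this is where the unrolling/extensionality of transducers is used), arranging that it can be taken valid for both policies, and verifying that its one-step suffix is again a \emph{minimal} witness for the evolved pair so that the induction closes. The probabilistic content — that one off-specification action, together with the independent $\tfrac14$ chance of no success on that step, dooms the agent — is routine once this scaffolding is in place.
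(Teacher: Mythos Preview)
Your proposal is correct and follows essentially the same route as the paper: exhibit $\testingEnvironment{\policy}$, observe that $\policy$ achieves success probability $1$, and then induct on the length of a minimal distinguishing trajectory to show any $\policy' \ne \policy$ has positive failure probability. Your treatment is in fact slightly more careful than the paper's in two places: you use $\lvert\stateSpace\rvert$ rather than the overloaded symbol $n$ for the state-space size, and you explicitly verify that the one-step suffix of a minimal witness is again minimal for the evolved pair (via the prepending argument), which the paper leaves implicit.
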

In particular this means that for the classes of transducers described in the previous section deterministic transducers exactly coincide with specifiable ones.

\subsection{Optimality of determinism}

Deterministic policies are optimal in another sense -- any environment has a (in general not unique) optimal deterministic policy.

To prove that we will need one more relation on bounded sum sequences and a proof that it respects mixtures and success probability respects it.
\begin{definition}
  \label{def.boundedSumPoset}
  We define the partial order $\geq$ on $\boundedSumSequences{r_{0}}$ coinductively by saying that $\left( r, t \right) \geq \left( r', t' \right)$ iff $\bssSum\left( \left( r, t \right) \right) \geq \bssSum\left( \left( r', t' \right) \right)$ and $t \geq t'$.
\end{definition}
Note that this definition is strictly stronger than just the first sequence having a greater sum -- any suffix of this sequence also has to have a greater sum than the corresponding suffix of the second sequence. The main motivation in defining the relation this way is the ability to use it in coinductive reasoning.
\begin{lemma}
  \label{lem.posetRespectsMixtures}
  Let $\left( r_{k}, t_{k} \right), \left( r'_{k}, t'_{k} \right) \colon
  \boundedSumSequences{s}$ be bounded sum sequences and $\alpha_{k} \in \left[
    0, 1 \right]$ be such that $\sum_{k = 0}^{\infty} \alpha_{k} = 1$. If
  $\forall_{k \in \N} \left( r_{k}, t_{k} \right) \geq \left( r'_{k}, t'_{k}
  \right)$ then
  \begin{equation}
    \mixture{\alpha_}{\kthSuccessSequence} \geq \sum_{k = 0}^{\infty} \alpha_{k} \left( r'_{k}, t'_{k} \right).
  \end{equation}

  \begin{proof}
    To prove that $\mixture{\alpha_}{\kthSuccessSequence} \geq \sum_{k = 0}^{\infty} \alpha_{k} \left( r'_{k}, t'_{k} \right)$ we have to prove that $\Sigma \left(\mixture{\alpha_}{\kthSuccessSequence}\right) \geq \Sigma \left(\sum_{k = 0}^{\infty} \alpha_{k} \left( r'_{k}, t'_{k} \right)\right)$ and that $\sum_{k = 0}^{\infty} \alpha_{k} t_k \ge \sum_{k = 0}^{\infty} \alpha_{k} t'_k$.
    We get the first required inequality by Lemma \ref{lem.sumOfMixtures}, and the second by the coinductive hypothesis.
  \end{proof}
\end{lemma}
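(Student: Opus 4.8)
The plan is to unfold the coinductive definition of the order $\geq$ on $\boundedSumSequences{s}$ (Definition~\ref{def.boundedSumPoset}) exactly once. Writing $\left( R, T \right) = \sum_{k = 0}^{\infty} \alpha_{k}\left( r_{k}, t_{k} \right)$ and $\left( R', T' \right) = \sum_{k = 0}^{\infty} \alpha_{k}\left( r'_{k}, t'_{k} \right)$, Definition~\ref{def.infraBoundedSumSequences} identifies these as the bounded sum sequences with $R = \sum_{k}\alpha_{k}r_{k}$, $T = \sum_{k}\alpha_{k}t_{k}$, and similarly $R' = \sum_{k}\alpha_{k}r'_{k}$, $T' = \sum_{k}\alpha_{k}t'_{k}$. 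By Definition~\ref{def.boundedSumPoset}, proving $\left( R, T \right) \geq \left( R', T' \right)$ reduces to two sub-goals: (i) $\bssSum\left( \left( R, T \right) \right) \geq \bssSum\left( \left( R', T' \right) \right)$, and (ii) $T \geq T'$. Since $\geq$ is a coinductively defined relation, (ii) is the place where a coinductive hypothesis will be invoked.

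For (i), I would apply Lemma~\ref{lem.sumOfMixtures} (sums of BSSes respect mixtures) to rewrite $\bssSum\left( \left( R, T \right) \right) = \sum_{k}\alpha_{k}\,\bssSum\left( \left( r_{k}, t_{k} \right) \right)$ and likewise for the primed mixture. The hypothesis $\left( r_{k}, t_{k} \right) \geq \left( r'_{k}, t'_{k} \right)$ gives, by the first conjunct of Definition~\ref{def.boundedSumPoset}, that $\bssSum\left( \left( r_{k}, t_{k} \right) \right) \geq \bssSum\left( \left( r'_{k}, t'_{k} \right) \right)$ for each $k$, so (i) follows because the weights $\alpha_{k}$ are nonnegative and a weighted average is monotone in each coordinate. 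For (ii), the same hypothesis gives $t_{k} \geq t'_{k}$ for each $k$ (the second conjunct); since $T$ and $T'$ are exactly the weighted mixtures $\sum_{k}\alpha_{k}t_{k}$ and $\sum_{k}\alpha_{k}t'_{k}$, sub-goal (ii) is precisely the statement of the lemma with the pairs $\left( r_{k}, t_{k} \right)$, $\left( r'_{k}, t'_{k} \right)$ replaced by their tails $t_{k}$, $t'_{k}$, and so it is discharged by the coinductive hypothesis.

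I do not expect a serious obstacle here: Lemma~\ref{lem.sumOfMixtures} already performs the only analytic work (passing a limit through a countable mixture), and what remains is a one-step coinductive unfolding. The one point to handle carefully is the bookkeeping of which space $\boundedSumSequences{\,\cdot\,}$ each object inhabits: the tails $t_{k}$ live in $\boundedSumSequences{s - r_{k}}$ with $r_{k}$ varying in $k$, so before forming the mixture and invoking the coinductive hypothesis one should regard them all inside $\boundedSumSequences{s}$ via the evident inclusion $\boundedSumSequences{a}\subseteq\boundedSumSequences{b}$ for $a\le b$; the bound attached to the mixed tail, namely $\sum_{k}\alpha_{k}(s - r_{k}) = s - \sum_{k}\alpha_{k}r_{k}$, then matches the bound required for the tail of $\left( R, T \right)$, so everything stays consistent.
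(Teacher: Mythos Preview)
Your proposal is correct and follows essentially the same approach as the paper: unfold Definition~\ref{def.boundedSumPoset} once, use Lemma~\ref{lem.sumOfMixtures} (together with the termwise inequality on $\bssSum$) for the first conjunct, and discharge the tail inequality via the coinductive hypothesis. Your additional remarks on the monotonicity of weighted averages and on the bound bookkeeping for the tails make explicit details the paper leaves implicit, but the argument is the same.
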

\begin{lemma}
  \label{lem.successRespectsPoset}
  The sum of bounded sum sequences respects the order from Definition \ref{def.boundedSumPoset}, that is
  \begin{equation}
    \left( r, t \right) \geq \left( r', t' \right) \implies \bssSum\left( \left( r, t \right) \right) \geq \bssSum\left( \left( r', t' \right) \right).
  \end{equation}

  \begin{proof}
    It suffices to note that the first requirement for inequality of bounded sum sequences is exactly the desired result.
  \end{proof}
\end{lemma}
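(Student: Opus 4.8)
The plan is to observe that the statement is essentially immediate from the definition of the order on bounded sum sequences. First I would unfold Definition~\ref{def.boundedSumPoset}: the relation $\left( r, t \right) \geq \left( r', t' \right)$ holds, by definition, exactly when \emph{both} $\bssSum\left( \left( r, t \right) \right) \geq \bssSum\left( \left( r', t' \right) \right)$ \emph{and} $t \geq t'$ (the latter being the same relation on the one-shorter bounded sum sequences). In particular the first conjunct is precisely the conclusion we want, so the implication holds with no further argument: we are simply projecting out one half of the hypothesis. Since a coinductively defined relation satisfies its defining equivalence, reading off this conjunct is legitimate, and it is all that is needed.

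I do not expect to need induction or coinduction here. The whole point of phrasing Definition~\ref{def.boundedSumPoset} the way it is --- demanding a sum comparison at the head \emph{and} recursively on the tail --- is so that the order is strong enough to be threaded through later coinductive proofs; the present lemma just extracts the head-level consequence that the order was designed to entail. The only background fact being used is that $\bssSum$ is defined on every element of $\boundedSumSequences{r_{0}}$, which was already established when $\bssSum$ was introduced, via the bounded non-decreasing sequence of partial sums $\bssSum_{n}$ and its limit.

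Consequently there is no real obstacle: the argument is a single line once the definition is unpacked. Its role is purely as a convenience lemma --- it is the bridge that lets an inequality $\left( r, t \right) \geq \left( r', t' \right)$ between, say, two success sequences be turned into the corresponding inequality between their total success probabilities $\bssSum\left( \left( r, t \right) \right) \geq \bssSum\left( \left( r', t' \right) \right)$, which is what gets used in the optimality-of-determinism results that follow.
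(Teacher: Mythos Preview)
Your proposal is correct and is exactly the paper's own argument: the order in Definition~\ref{def.boundedSumPoset} has $\bssSum\left( \left( r, t \right) \right) \geq \bssSum\left( \left( r', t' \right) \right)$ as its first defining conjunct, so the conclusion is read off directly. Your additional remarks about the role of the lemma and why no induction or coinduction is needed are accurate but go beyond what the paper bothers to say.
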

\begin{theorem}[Deterministic are optimal]
  \label{thm.determinismIsOptimal}
  Let $\environment \colon \environments$ and $\policy \colon \policies$
  be an environment-policy pair. Then there exists a deterministic
  policy $\policy' \colon \deterministicTransducers \subset \policies$ such that
  \begin{equation}
    \successProbability\left( \policy', \environment \right) \geq \successProbability\left( \policy, \environment \right).
  \end{equation}

  \begin{proof}
    \newcommand{\deterministification}{C_{\environment}}
    We will explicitly construct a mapping $\deterministification \colon \policies \to \deterministicTransducers$, such that
    \begin{equation}
      \label{eq.deterministificationIsBetter}
      \successSequence\left( \coupling\left( \deterministification\left( \policy \right), \environment \right) \right) \geq \successSequence\left( \coupling\left( \policy, \environment \right) \right),
    \end{equation}
    so that setting $\policy' = \deterministification\left( \policy \right)$ and applying Lemma \ref{lem.successRespectsPoset} will finish the proof.

    To do that fix a total order on $\actionSpace$ -- we need this only to make one choice unique, so the order does not have to represent anything in particular.
    \newcommand{\optimalAction}{\mathrm{oa}}%
    Define $\optimalAction\left( \policy, \environment \right)$ (for `optimal action') as the element of
    \begin{equation}
      \mathrm{argmax}\left\{ \sum_{s \in \stateSpace} \probability_{\environment}\left( \left( s, \noSuccess \right) \right) \successSequence\left( \coupling\left( \policy \evolution \left( s, a \right), \environment \evolution \left( a, \left( s, \noSuccess \right) \right) \right) \right) \right\}
    \end{equation}
    that is least with respect to this order.
    Now define
    \newcommand{\miniDeterministification}{c_{\environment}}
    \begin{equation}
      \label{eq.determinisation}
      \deterministification\left( \policy \right) = \left( \miniDeterministification\left( \policy \right), \deterministification \circ \transition_{\policy} \right),
    \end{equation}
    where $\miniDeterministification \colon \policies \to \distributions{\actionSpace}$ is defined as
    \begin{equation}
      \miniDeterministification\left( \policy \right) = \begin{cases}
        1 & \textrm{ if } a = \optimalAction\left( \policy, \environment \right),\\
        0 & \textrm{ otherwise.}
      \end{cases}
    \end{equation}
    Note that in \eqref{eq.determinisation}, $\transition_{\policy}$ denotes a restriction of the transition function, as usual.

    It remains to prove that \eqref{eq.deterministificationIsBetter}
    holds. If we write $(r', t') = \successSequence\left(
      \coupling\left( \deterministification\left( \policy \right),
        \environment \right) \right)$ and $(r, t) =
    \successSequence\left( \coupling\left( \policy, \environment \right)
    \right)$ this means we have to prove that $\bssSum\left( (r', t')
    \right) \geq \bssSum\left( (r, t) \right)$ and $t' \geq t$. Let us
    start by considering the second inequality. Expanding the definition
    of the left hand side gives
    \begin{equation}
      t' = \sum_{s \in \stateSpace} \sum_{a \in \actionSpace}
      \probability_{\coupling\left( \deterministification\left( \policy \right), \environment \right)}\left( s, \noSuccess, a \right) \successSequence\left( \coupling\left( \deterministification\left( \policy \right) \evolution \left( s, a \right), \environment \evolution \left( a, \left( s, \noSuccess \right) \right) \right) \right).
    \end{equation}
    Applying the definition of $\deterministification$ to the evolution, while setting $a_{0} = \optimalAction\left( \policy, \environment \right)$ for brevity and noting that the only nonzero probability corresponds to it we have
    \begin{equation}
      t' = \sum_{s \in \stateSpace}
      \probability_{\coupling\left( \deterministification\left( \policy \right), \environment \right)}\left( s, \noSuccess, a_{0} \right) \successSequence\left( \coupling\left( \deterministification\left( \policy \evolution \left( s, a_{0} \right) \right), \environment \evolution \left( a_{0}, \left( s, \noSuccess \right) \right) \right) \right).
    \end{equation}
    We can then use the coinductive hypothesis \eqref{eq.deterministificationIsBetter} to obtain
    \begin{equation}
      t' \geq \sum_{s \in \stateSpace}
      \probability_{\coupling\left( \deterministification\left( \policy \right), \environment \right)}\left( s, \noSuccess, a_{0} \right) \successSequence\left( \coupling\left( \policy \evolution \left( s, a_{0} \right), \environment \evolution \left( a_{0}, \left( s, \noSuccess \right) \right) \right) \right)
    \end{equation}
    and finally by the choice of $a_{0}$ and Lemma \ref{lem.posetRespectsMixtures} this is no less than the weighted mean
    \begin{equation}
      t' \geq \sum_{s \in \stateSpace} \sum_{a \in \actionSpace}
      \probability_{\coupling\left( \policy, \environment \right)}\left( s, \noSuccess, a \right) \successProbability\left( \policy \evolution \left( s, a \right), \environment \evolution \left( a, \left( s, \noSuccess \right) \right) \right) = t,
    \end{equation}
    as required. After applying Lemma \ref{lem.successRespectsPoset} this also gives us
    \begin{equation}
      \label{eq.futureSuccess}
      \bssSum\left( t' \right)\geq \bssSum\left( t \right)
    \end{equation}

    For the first inequality we need notice that
    \begin{equation}
      \sum_{s \in \stateSpace} \sum_{a \in \actionSpace}
      \probability_{\coupling\left( \deterministification\left( \policy \right), \environment \right)}\left( s, \noSuccess, a \right) =
      \sum_{s \in \stateSpace} \sum_{a \in \actionSpace}
      \probability_{\coupling\left( \policy, \environment \right)}\left( s, \noSuccess, a \right),
    \end{equation}
    because the total probability assigned to a state does not depend on the policy. Since $\bssSum\left( \left( r, t \right) \right) = r + \bssSum\left( t \right)$, it remains to use \eqref{eq.futureSuccess} to finish the proof.
  \end{proof}
\end{theorem}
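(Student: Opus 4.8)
The plan is to exhibit, for every environment $\environment$, an explicit ``determinization'' operator that turns any policy into a deterministic one without decreasing its success probability, and then to verify the resulting inequality coinductively using the bounded-sum-sequence machinery already developed. Concretely, I would fix once and for all a total order on $\actionSpace$ (used only to break ties) and define corecursively a map $C$ on (environment, policy) pairs: given $\environment$ and $\policy$, let $a_{0}$ be the least action (in the fixed order) that maximizes the continuation value $\sum_{s \in \stateSpace} \probability_{\environment}(s, \noSuccess)\, \successProbability\bigl(\policy \evolution (s, a),\ \environment \evolution (a, (s, \noSuccess))\bigr)$ over $a \in \actionSpace$, and set $C(\environment, \policy) = \bigl(\delta_{a_{0}},\ (s, a) \mapsto C(\environment \evolution (a, (s, \noSuccess)),\ \policy \evolution (s, a))\bigr)$, where $\delta_{a_0}$ is the point mass at $a_0$. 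This is plainly a deterministic transducer. The target is then the single invariant $\successSequence(\coupling(C(\environment, \policy), \environment)) \geq \successSequence(\coupling(\policy, \environment))$ in the partial order on $\boundedSumSequences{1}$ of Definition~\ref{def.boundedSumPoset}; once it is established, taking $\policy' = C(\environment, \policy)$ and applying Lemma~\ref{lem.successRespectsPoset} yields $\successProbability(\policy', \environment) \geq \successProbability(\policy, \environment)$.

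To prove the invariant I would unfold both success sequences one step using Definition~\ref{def.successSequence} and the definition of coupling. The first component $\sum_{s}\probability_{\environment}(s, \success)$ does not depend on the policy (the coupling's output distribution is a product and the action coordinate marginalizes away), and likewise the total ``no success'' weight $\sum_{s}\probability_{\environment}(s, \noSuccess)$ agrees on both sides. On the $C(\environment, \policy)$ side only the term $a = a_0$ survives the policy weighting, so the continuation is a $\probability_{\environment}(\cdot, \noSuccess)$-weighted mixture of terms $\successSequence(\coupling(C(\environment \evolution (a_0, (s, \noSuccess)), \policy \evolution (s, a_0)),\ \environment \evolution (a_0, (s, \noSuccess))))$; on the $\policy$ side it is a $\probability_{\environment}(\cdot, \noSuccess)\probability_{\policy}(\cdot)$-weighted mixture over all $(s, a)$. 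The coinductive hypothesis together with Lemma~\ref{lem.successSequenceRespectsMixtures} and Lemma~\ref{lem.posetRespectsMixtures} lets me replace each $C(\ldots)$ inside the mixture by the corresponding bare evolved policy while only decreasing the sequence in the order; and for the \emph{sum} half of the order relation, the defining property of $a_0$ together with Lemma~\ref{lem.sumOfMixtures} immediately gives that $\bssSum$ of the $C$-side is at least $\bssSum$ of the $\policy$-side.

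The step I expect to be the real obstacle is the \emph{non-sum} half of the BSS order relation: membership in $\geq$ demands more than a larger total, since after discarding the first term every suffix must still dominate the corresponding suffix. So it is not enough that $a_0$ beats the other actions in success probability; the evolved-and-mixed success sequence it induces must dominate the \emph{whole} policy-weighted mixture $\sum_{a}\probability_{\policy}(a)(\cdots)$ in the partial order. A finite family of bounded sum sequences need not possess a maximum --- two sequences, one with the larger total but all its mass on the first coordinate, the other spreading mass onto later coordinates, can be incomparable --- so a naive ``pick the argmax'' choice of $a_0$ does not by itself close the coinduction. Resolving this is the crux: one must either argue that the continuation success sequences arising here are in fact pairwise comparable (so that a genuine maximum exists and the least maximizing action can be taken), or replace the single coinductive invariant by an induction over the $n$-step success probabilities $\successProbability_{n}$ --- which are ordinary real numbers and hence always admit a maximizing action --- and pass to the limit, taking care that an action maximizing the limiting value need not maximize each $\successProbability_n$.
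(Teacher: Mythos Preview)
Your construction and coinductive plan coincide with the paper's: build a determinization that at each stage commits to a single action and recurse, then establish the BSS inequality $\successSequence(\coupling(C(\environment,\policy),\environment))\geq\successSequence(\coupling(\policy,\environment))$ coinductively and extract the conclusion via Lemma~\ref{lem.successRespectsPoset}. One minor difference: you thread $\environment$ through the corecursion, whereas the paper writes $C_\environment$ with a fixed subscript and transition $C_\environment\circ\transition_\policy$; your version is the one under which the coinductive-hypothesis application actually lines up with the evolved environment.

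The obstacle you isolate --- the non-sum half of the BSS order --- is precisely the step the paper asserts without justification: after invoking the coinductive hypothesis it writes ``by the choice of $a_0$ and Lemma~\ref{lem.posetRespectsMixtures}'' to pass from the single-$a_0$ term to the full $\probability_\policy$-weighted mixture in the order $\geq$ on bounded sum sequences. Your suspicion is correct, and your first proposed resolution (pairwise comparability of the continuation sequences) fails in general. Take $\stateSpace=\{\ast\}$, $\actionSpace=\{a_1,a_2\}$, and let $\environment$ emit $\noSuccess$ at step~$0$; after $a_1$ it emits $\success$ with probability~$1$ at step~$1$, while after $a_2$ it emits $\noSuccess$ at step~$1$ and then $\success$ with probability~$1$ at step~$2$. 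The two continuation sequences are $(1,0,0,\dots)$ and $(0,1,0,\dots)$, incomparable; both have sum~$1$, so either may be selected as $a_0$. With $a_0=a_1$ one computes $\successSequence(\coupling(C(\environment,\policy),\environment))=(0,1,0,\dots)$ and $\successSequence(\coupling(\policy,\environment))=(0,\probability_\policy(a_1),\probability_\policy(a_2),0,\dots)$, and the former is \emph{not} $\geq$ the latter once $\probability_\policy(a_2)>0$, since the second tail has sum $0$ versus $\probability_\policy(a_2)$. So the invariant~\eqref{eq.deterministificationIsBetter} is strictly stronger than the theorem and is false as stated; the paper's proof shares exactly the gap you identified. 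Your second route --- work with the finite-horizon $\successProbability_n$ and pass to the limit --- is the more promising repair, but (as you note) the action maximizing $\successProbability$ need not maximize each $\successProbability_n$, so one needs a compactness/diagonal extraction over finite-horizon determinizations, or a different invariant altogether.
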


As a simple corollary we get.
\begin{proposition}[Deterministic are uniquely optimal if present]
  \label{prop.deterministicAreBest}
  Let $\constrainedTransducers \subseteq \policies$ be a constrained class of transducers such that $\deterministicTransducers \subseteq \constrainedTransducers$. Then deterministic transducers are exactly $\constrainedTransducers$-teleo-specifiable.

  \begin{proof}
    By Theorem \ref{thm.deterministicAreSpecifiable} they are specifiable, and any nondeterministic transducer cannot be uniquely optimal due to Theorem \ref{thm.determinismIsOptimal}.
  \end{proof}
\end{proposition}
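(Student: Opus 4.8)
The plan is to prove the claimed equivalence by establishing the two inclusions separately, invoking the two theorems just proved as black boxes.

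First I would show that every deterministic policy is $\constrainedTransducers$-teleo-specifiable. Fix a deterministic $\policy \colon \deterministicTransducers$; since $\deterministicTransducers \subseteq \constrainedTransducers$ by hypothesis, $\policy$ is in fact a member of $\constrainedTransducers$. Theorem~\ref{thm.deterministicAreSpecifiable} furnishes an environment, namely $\testingEnvironment{\policy}$, for which $\policy$ is uniquely optimal among \emph{all} policies in $\policies$: that is, $\successProbability\left( \policy', \testingEnvironment{\policy} \right) \geq \successProbability\left( \policy, \testingEnvironment{\policy} \right)$ implies $\policy' = \policy$ for every $\policy' \colon \policies$. Restricting the universally quantified $\policy'$ from $\policies$ down to the subclass $\constrainedTransducers \subseteq \policies$ only weakens the hypothesis of this implication, so the same environment witnesses $\constrainedTransducers$-teleo-specification of $\policy$ in the sense of Definition~\ref{def.teleoSpecification}.

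For the converse, suppose $\policy \colon \constrainedTransducers$ is $\constrainedTransducers$-teleo-specified by some $\environment \colon \environments$, and suppose towards a contradiction that $\policy$ is not deterministic. By Theorem~\ref{thm.determinismIsOptimal} there is a deterministic policy $\policy' \colon \deterministicTransducers$ with $\successProbability\left( \policy', \environment \right) \geq \successProbability\left( \policy, \environment \right)$. Since $\deterministicTransducers \subseteq \constrainedTransducers$ we have $\policy' \colon \constrainedTransducers$, so the uniqueness clause of teleo-specification forces $\policy' = \policy$; but $\policy'$ is deterministic and $\policy$ is not, a contradiction. Hence every $\constrainedTransducers$-teleo-specifiable policy is deterministic, and together with the previous paragraph the two classes coincide.

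I do not expect a substantial obstacle: all the real work is already packaged in Theorems~\ref{thm.deterministicAreSpecifiable} and~\ref{thm.determinismIsOptimal}. The one point needing a moment's care is that Theorem~\ref{thm.deterministicAreSpecifiable} is stated for unique optimality among all of $\policies$, so for the forward direction one must observe explicitly that passing to the smaller comparison class $\constrainedTransducers$ preserves unique optimality — which it does, precisely because the only potential competitor one needs to rule out already lies in $\constrainedTransducers$.
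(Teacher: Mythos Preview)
Your proposal is correct and follows exactly the same two-theorem strategy as the paper's own (very terse) proof: Theorem~\ref{thm.deterministicAreSpecifiable} for the forward inclusion and Theorem~\ref{thm.determinismIsOptimal} for the converse. You have simply spelled out the details the paper leaves implicit, in particular the observation that unique optimality over all of $\policies$ restricts to unique optimality over the subclass $\constrainedTransducers$, and the contradiction argument that a nondeterministic $\constrainedTransducers$-specifiable policy would be forced to equal a deterministic competitor.
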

This proposition could have been used to prove that deterministic policies are exactly specifiable policies among unconstrained policies instead of Theorem \ref{thm.specifiableAreDeterministic}. It works in any class of policies that contains all deterministic transducers, even if the nondeterministic transducers are not decomposable in that class. However, Theorem \ref{thm.specifiableAreDeterministic} tells us more about classes which don't necessarily contain all deterministic transducers. For example consider one-flip transducers without deterministic transducers -- Proposition \ref{prop.deterministicAreBest} tells us nothing about specifiability in this class, while Theorem \ref{thm.specifiableAreDeterministic} clearly shows that there are no specifiable policies there at all.

\subsection{The absent-minded driver}
\label{sec.absentMinded}

If we constrain policies to a set in which nondeterministic policies do
not admit nontrivial decompositions, we might encounter policies that are
specifiable, but not deterministic. In this section we translate one
classic example of such a situation into the language of transducers --
the absent-minded driver problem, first proposed in \cite{PICCIONE19973}.
Our version differs somewhat from the original formulation to make it more
suited for our setting and make our analysis simpler, but the reasons
underlying the nondeterminism of the optimal solution remain the same.

In plain language the problem can be stated as follows. There is a motorway with infinitely many identical exits. At every exit the driver can either continue or turn. The driver's goal is to turn at the second exit, but unfortunately they are absent-minded, in the sense that they have no memory at all and hence do not know whether they have already passed any exits or how many. What is the optimal policy for such a driver?

We will need one more generic environment for our example.
\newcommand{\successEnvironment}{\environment_{1}}
\begin{definition}[Success environment]
  \label{def.successEnvironment}
  The \emph{success environment} is one that returns one success and then becomes the doom environment
  \begin{equation}
    \successEnvironment = \left( \successDistribution, \left( a, \left( s, t \right) \right) \mapsto \doomEnvironment \right).
  \end{equation}
  Note that since we only care about success occuring once, the doom environment after transition is not particularly gloomy.
\end{definition}

\begin{example}[The absent-minded driver]
  Let $\actionSpace = \left\{ c, e \right\}$ (where $c$ stands for `continue' and $e$ for `exit'), and $\stateSpace = \left\{ 0 \right\}$. We define the `please exit' environment as
  \begin{equation}
    \environment_{e} = \left( \nothingDistribution, \left( a, \left( s, t \right) \right) \mapsto \begin{cases}
      \successEnvironment & \textrm{ if } a = e,\\
      \doomEnvironment & \textrm{ otherwise}
    \end{cases}\right).
\end{equation}
The absent-minded driver environment is then
\begin{equation}
  \environment = \left( \nothingDistribution, \left( a, \left( s, t \right) \right) \mapsto \begin{cases}
    \environment_{e} & \textrm{ if } a = c,\\
    \doomEnvironment & \textrm{ otherwise}
  \end{cases}\right).
\end{equation}

Making the absent-minded driver actually absent-minded means we restrict our policies to only i.i.d. policies. As mentioned before, they are the same policies as unifilar one state policies $\unifilarFiniteStateTransducers{1}$, and the absent-mindedness translates to only having one memory state.

An i.i.d. policy is uniquely defined by the probabilities it assigns to actions, let us denote them by $p_{c}$ and $p_{e}$ respectively. A simple computation tells us that the success probability of such an i.i.d. policy on $\environment$ is exactly $p_{c}p_{e}$, since the only way of achieving success is to first continue and then exit. The unique values that maximize this probability are $p_{c} = p_{e} = \frac{1}{2}$. Thus the i.i.d. policy assigning these probabilities is $\unifilarFiniteStateTransducers{1}$-teleo-specified by $\environment$, and it is nondeterministic as promised.
\end{example}

It is worth noting an important point about this example.
Since the transducer with  $p_{c} = p_{e} = \frac{1}{2}$ is optimal within the constrained class $\unifilarFiniteStateTransducers{1}$ for the absent-minded driver problem, we could attribute the absent-minded driver problem as a normative-epistemic state.
However, such a normative-epistemic interpretation lacks a property held by the other examples so far, in that its beliefs do not update consistently according to the value-laden filtering scheme described in Section \ref{sec.filtering}.
This lack of temporal consistency with respect to value-laden filtering is a consequence of the fact that $\unifilarFiniteStateTransducers{1}$ does not have the value-laden Bellman property.

\subsection{No sensorimotor-only filtering in generic environments}
\label{sec.noGenericSensorimotor}

Recall that in our setting, the agent's goal is to achieve success at least once, with subsequent success signals counting for nothing.
This leads to the notion of `value-laden filtering', in which the agent always behaves as though it has not already achieved success, since this maximises its overall chances of eventually achieving success.

In light of this we shouldn't expect the evolution of an optimal agent to remain optimal for the environment evolved only by the sensorimotor signals, as that environment becomes a mixture of the evolution after a failure (which the optimal agent cares about) and after a success (which the optimal agent shouldn't care about). If we, then, construct an environment which requires very different behaviour to achieve success after at least one success than otherwise, we should be able to exhibit a counterexample to sensorimotor-only filtering.

Indeed, as promised at the end of the filtering section, we present such an example based on this approach. First let us formally define a special environment behaving as described above.
\newcommand{\trickyTestingEnvironment}[2]{\environment'_{\mathrm{test}}\left( #1, #2 \right)}
\begin{definition}[Tricky testing environment]
  \label{def.trickyTestingEnvironment}
  Given two deterministic policies $\policy, \policy' \in
  \deterministicTransducers \subset \policies$ we define their \emph{tricky testing environment} as
  \begin{multline}
    \trickyTestingEnvironment{\policy}{\policy'} =
    \Bigg( \frac{1}{4} \nothingDistribution + \frac{3}{4} \successDistribution, \\ \left. \left( a, \left( s, t \right) \right) \mapsto \begin{cases}
      \trickyTestingEnvironment{\policy \evolution \left( s, a \right)}{\policy'} & \textrm{ if } t = \noSuccess,\\
      \testingEnvironment{\policy'} & \textrm{ if } t = \success,\\
      \doomEnvironment & \textrm{ if } \policy \evolution \left( s, a \right) \textrm{ is not valid}
    \end{cases} \right),
\end{multline}
where $\nothingDistribution$ and $\successDistribution$ are the uniform nothing distribution and the uniform success distribution (Definition~\ref{def.successDistribution}), $\testingEnvironment{\policy'}$ is the testing environment for $\policy'$ (Definition~\ref{def.testingEnvironment}) and $\doomEnvironment$ is the doom environment (Definition~\ref{def.doomAndDespair}).
\end{definition}
The tricky testing environment $\trickyTestingEnvironment{\policy}{\policy'}$ behaves like the uniform testing environment for $\policy$ until it produces a success, after which it starts behaving like the testing environment for $\policy'$.

It's easy to see that a tricky testing environment is equal to the related uniform testing environment after truncation, that is $\successTruncation\left( \testingEnvironment{\policy} \right) = \successTruncation\left( \trickyTestingEnvironment{\policy}{\policy'} \right)$. In particular, by Lemma \ref{lem.successRespectsTruncation} this means that $\trickyTestingEnvironment{\policy}{\policy'}$ teleo-specifies $\policy$.

\begin{example}[No sensorimotor-only filtering in an arbitrary environment]
  \label{ex.noSensorimotorOnlyFiltering}
  Let $\actionSpace = \left\{ 0, 1 \right\}$, and $\policy, \policy'
  \colon \deterministicTransducers \subset \policies$ be the policies constantly returning $0$ and $1$ respectively. Consider then the environment $\trickyTestingEnvironment{\policy}{\policy'}$. As mentioned above it teleo-specifies $\policy$, in particular this policy is optimal for the environment. However the policy $\policy \evolution \left( s, 0 \right) = \policy$ is \emph{not} optimal for
  \begin{equation}
    \trickyTestingEnvironment{\policy}{\policy'} \evolution \left( 0, s \right) = \frac{1}{4} \trickyTestingEnvironment{\policy \evolution \left( s, 0 \right)}{\policy'} + \frac{3}{4}\testingEnvironment{\policy'}.
  \end{equation}
  To see this it suffices to compute the success probabilities of $\policy$ and $\policy'$ on these environments. Since the policies never return the same value, then by construction of the environments
  \begin{equation}
    \successProbability\left( \policy, \testingEnvironment{\policy'} \right) = \successProbability\left( \policy', \trickyTestingEnvironment{\policy}{\policy'} \right) = \frac{3}{4},
  \end{equation}
  and by the proof of Theorem \ref{thm.deterministicAreSpecifiable} combined with Lemma \ref{lem.successRespectsTruncation}
  \begin{equation}
    \successProbability\left( \policy', \testingEnvironment{\policy'} \right) = \successProbability\left( \policy, \trickyTestingEnvironment{\policy}{\policy'} \right) = 1.
  \end{equation}
  Thus, we can finish the proof using Lemma \ref{lem.successOfMixtures} twice
  \begin{equation}
    \successProbability\left( \policy, \trickyTestingEnvironment{\policy}{\policy'} \evolution \left( 0, s \right) \right) = \frac{13}{16} < \frac{15}{16} = \successProbability\left( \policy', \trickyTestingEnvironment{\policy}{\policy'} \evolution \left( 0, s \right) \right).
  \end{equation}
\end{example}

\section*{ACKNOWLEDGEMENT}
\small
This article was produced with financial and technical support from Principles of Intelligent Behaviour in Biological and Social Systems (PIBBSS), and Simon McGregor's work was supported with a scholarship grant from the Alignment of Complex Systems Research Group (ACS) at Charles University in Prague.
Nathaniel Virgo's work on this publication was made possible through the support of Grant 62229 from the John Templeton Foundation. The opinions expressed in this publication are those of the author(s) and do not necessarily reflect the views of the John Templeton Foundation.

The authors would also like to thank
Martin Biehl for feedback on the manuscript and Robert Obryk for fruitful
discussions about technical details.

\bibliography{notes}

\end{document}